\definecolor{qqqqff}{rgb}{0.,0.,1.}
\definecolor{cqcqcq}{rgb}{0.7529411764705882,0.7529411764705882,0.7529411764705882}
\definecolor{ttqqqq}{rgb}{0.2,0.,0.}
\definecolor{qqqqff}{rgb}{0.,0.,1.}
\definecolor{xdxdff}{rgb}{0.49019607843137253,0.49019607843137253,1.}
\definecolor{zzttqq}{rgb}{0.6,0.2,0.}
\definecolor{cqcqcq}{rgb}{0.7529411764705882,0.7529411764705882,0.7529411764705882}
\definecolor{yqyqyq}{rgb}{0.5019607843137255,0.5019607843137255,0.5019607843137255}
\definecolor{uuuuuu}{rgb}{0.26666666666666666,0.26666666666666666,0.26666666666666666}
\definecolor{xdxdff}{rgb}{0.49019607843137253,0.49019607843137253,1.}
\definecolor{qqqqff}{rgb}{0.,0.,1.}
 \font\ncsc=cmcsc10
 \font\ntt=cmtt12
\newcommand{\ZZ}{\mathbb{Z}}
\newcommand{\RR}{\mathbb{R}}
\newcommand{\CC}{\mathbb{C}}
\newcommand{\gen}[1]{\langle #1 \rangle}
\newcommand{\modpar}{\mathcal{M}_0(\Delta,N_\RR)}
\newcommand{\ori}{\mathfrak{or}}
\newcommand{\bino}[2]{\begin{pmatrix}
#1 \\
#2 \\
\end{pmatrix}}
\newtheorem{theo}{Theorem}[section]
\newtheorem*{theom}{Theorem}
\newtheorem{prop}[theo]{Proposition}
\newtheorem{lem}[theo]{Lemma}
\theoremstyle{definition}
\newtheorem{defi}[theo]{Definition}
\newtheorem{prob}[theo]{Problem}
\theoremstyle{remark}
\newtheorem{remark}[theo]{Remark}
\newenvironment{rem}[1]{
    \begin{remark}#1}{
    \xqed{\blacklozenge}\end{remark}
}
\theoremstyle{remark}
\newtheorem{example}[theo]{Example}
\newenvironment{expl}[1]{
    \begin{example}#1}{
    \xqed{\lozenge}\end{example}
}
\newcommand{\xqed}[1]{
    \leavevmode\unskip\penalty9999 \hbox{}\nobreak\hfill
    \quad\hbox{\ensuremath{#1}}}
\keywords{Enumerative geometry, tropical refined invariants\\ \textit{Data Statement:} I do not have any data to point.}
\begin{document}
 
 
\title{Refined count for rational tropical curves \\ in arbitrary dimension}
\author{Thomas Blomme}

\begin{abstract}
In this paper we introduce a refined multiplicity for rational tropical curves in arbitrary dimension, which generalizes the refined multiplicity introduced by F.~Block and L.~G\"ottsche in \cite{block2016refined}. We then prove an invariance statement for the count of rational tropical curves in several enumerative problems using this new refined multiplicity. This leads to the definition of Block-G\"ottsche polynomials in any dimension.
\end{abstract}

\maketitle

\tableofcontents

\section{Introduction}

\subsection{Enumerative geometry, classical and tropical}

Tropical enumerative geometry takes its roots in the founding paper \cite{mikhalkin2005enumerative} by G.~Mikhalkin. He there proved a correspondence theorem that allows one to compute several classical geometric invariants for toric surfaces, namely Gromov-Witten type invariants and Welschinger invariants whenever defined, through the solving of some associated enumerative problem in planar tropical geometry. Other approaches to the correspondence theorem were later provided by E.~Shustin \cite{shustin2002patchworking} and I.~Tyomkin \cite{tyomkin2012tropical} along with a generalization for rational curves in higher dimensional toric varieties by T.~Nishinou and B.~Siebert \cite{nishinou2006toric}. Some progress has even been made in the non-toric case, for instance in the case of abelian surfaces by T.~Nishinou \cite{nicaise2018tropical}.

In each situation, the spirit of the tropical approach is the same: the goal is to make a count of algebraic curves satisfying some constraints and which is known to be invariant when the constraints vary in some family, \textit{e.g.} a count of curves passing through some points and which does not depend on the choice of the points as long as it is generic. One then degenerates the constraints to the so-called \textit{tropical limit}, where the classical enumerative problem becomes a tropical enumerative problem. The desired curve count might then be recovered by counting the tropical curves solution to this new problem with a suitable multiplicities depending on the problem.

\begin{expl}
Explicitly, in the case of rational curves in the projective plane $\CC P^2$, one of these enumerative problems consists in finding the degree $d$ rational curves passing through a chosen configuration of $3d-1$ points chosen in $\CC P^2$. The result does not depend on the chosen points if their choice is generic, and is known as $N_d$, a Gromov-Witten invariant of $\CC P^2$.

Over the real numbers, J-Y.~Welschinger showed in \cite{welschinger2005invariants} that if the configuration of $3d-1$ points is chosen invariant by conjugation and the curves are counted with a suitable sign, the count only depends on $d$ and the number $s$ of pairs of complex conjugated points. It is denoted by $W_{d,s}$. In particular $W_{d,0}$ corresponds to the signed count of real rational curves passing through a generic configuration of $3d-1$ real points.
\end{expl}

We keep in mind the case of degree $d$ rational curves in the pursue of this introduction. To the tropical limit, the enumerative problems leading to the definition of $N_d$ and $W_{d,0}$ degenerate to the following tropical enumerative problem: finding degree $d$ rational tropical curves passing through a chosen configuration of $3d-1$ points. For a definition of tropical curves, see section \ref{section tropical curves}, but for the reading of the introduction, one only needs to know that they are some finite graphs inside an affine space $\RR^r$, and that they have some infinite edges whose slopes form the \textit{degree} of the curve. According to \cite{mikhalkin2005enumerative}, to recover both invariants $N_d$ and $W_{d,0}$, it suffices to count tropical curves $\Gamma$ passing through a generic choice of $3d-1$ points with different multiplicities $m_\Gamma^\CC$ and  $m_\Gamma^\RR$, defined as follows: there exists a natural vertex multiplicity $m_V$, and one then sets
$$m_\Gamma^\CC=\prod_V m_V \text{ and } m_\Gamma^\RR=\prod_V \left\{ \begin{array}{l}
(-1)^{\frac{m_V-1}{2}} \text{ if }m_V\equiv 1\ [2] \\
0 \text{ else.}
\end{array}\right.$$
Both multiplicities are a product over the vertices of the tropical curve. The count with $m^\CC_\Gamma$ gives $N_d$ and the count with $m_\Gamma^\RR$ gives $W_{d,0}$. In \cite{block2016refined}, F.~Block and L.~G\"ottsche introduced a refined multiplicity for tropical curves, which is also a product over the vertices of the tropical curve:
$$m^{BG}_\Gamma=\prod_V \frac{q^{m_V/2}-q^{-m_V/2}}{q^{1/2}-q^{-1/2}}\in\ZZ[q^{\pm 1/2}].$$
This multiplicity is a Laurent polynomial in one variable $q$, which specializes to $m_\Gamma^\CC$ and $m_\Gamma^\RR$ when $q$ goes respectively to $1$ and $-1$. The count of tropical solutions using this refined multiplicity was proven to also lead to an invariant by I.~Itenberg and G.~Mikhalkin in \cite{itenberg2013block}. The classical meaning of these tropically defined invariants remains quite mysterious. Conjecturally \cite{gottsche2014refined}, they relate to the computation of the $\chi_{-y}$-genera of the relative Hilbert schemes of some linear systems of curves. Some attempts in this direction have been made by J.~Nicaise, S.~Payne and F.~Schroeter \cite{nicaise2018tropical}.

Another lead to understand these invariants was provided by Mikhalkin in \cite{mikhalkin2017quantum}. Though his approach is valid for curves in a general toric surface, we still consider the degree $d$ rational curves in the case of the projective plane, and a configuration $\mathcal{P}$ of $3d-1$ points which are this time chosen on the coordinate axis of the projective plane. Mikhalkin introduced a signed count for the oriented real rational curves passing through $\pm p$ for each $p\in\mathcal{P}$. This count is refined according to the value of a so called \textit{quantum index}, which is defined more generally for each oriented real curve of type I. He then proved that the result of the count only depends on the number of pairs of complex conjugated points on each axis. If the points are all chosen real, he relates the refined count to the tropical invariant obtained solving the analog tropical enumerative problem, and using Block-G\"ottsche multiplicities. The tropical analog problem is finding degree $d$ rational tropical curves whose infinite edges are contained in some fixed lines. The result was extended by the author in \cite{blomme2020tropical} and \cite{blomme2020computation} in case there are complex conjugated points.

Since, many other tropical enumerative problems have been shown to support a \textit{refinement}, \textit{i.e.} the existence of a non-trivial polynomial multiplicity such that the count of solution using this multiplicity does not depend on the chosen constraints. See for instance \cite{blechman2019refined}, \cite{schroeter2018refined} or  \cite{gottsche2019refined}. However, in every of these situations, the new refined multiplicity of a tropical curve is a product over its vertices, just as is the complex multiplicity. In higher dimension, the complex multiplicity is also not necessarily a product over the vertices. This a priori prevents anyone to find a simple multiplicity which is a product over the vertices and leads to an invariant. Finding a refined multiplicity which is not a product over the vertices seems even more complicated. Here, we show an in-between approach, providing a multiplicity which is up to sign a product over the vertices. The definition of this sign, which is at the center of the proof, carries the "non-vertex" information.

\begin{rem}
Notice that it might happen even in the planar setting that the complex multiplicity of a tropical curve is not a product over its vertices, as shown by T. Mandel and H. Ruddat \cite{mandel2019tropical}. This occurs when dealing with tropical curves which are not trivalent, for instance, imposing that the curves meets some line at one of its vertices.
\end{rem}

\subsection{Moment problem and refined invariance}

A natural generalization of the tropical enumerative problem considered by Mikhalkin in \cite{mikhalkin2017quantum} is the following, which we call $\omega$-\textit{problem}. Let $N$ be a lattice of rank $r$ and $M$ its dual lattice. From now on, $N_\RR=N\otimes_\ZZ\RR$ will be our workspace for tropical curves and every enumerative problem. We consider tropical curves of degree $\Delta$, which is a multiset of vectors in $N$ of total sum $0$, and not containing the zero vector. Let $\omega$ be a $2$-form on $N$, and for each $n_e\in\Delta$, fix a hyperplane $\mathcal{H}_e$ with slope $\ker\iota_{n_e}\omega$. We look for degree $\Delta$ rational tropical curves such that the unbounded end directed by $n_e$ is contained in $\mathcal{H}_e$. As this is not enough to ensure having a finite number of solutions, we add the condition that a chosen unbounded end $e_0$ is contained in a fixed plane $D\subset N_\RR$ transverse to $\mathcal{H}_{e_0}$.

\begin{rem}
In the planar case, there is only one possible choice of non-zero $2$-form $\omega$ up to multiplication by a scalar, and no need to choose a plane $D$. For each choice of $\Delta$, the $\omega$-problems coincide with the enumerative problem considered by Mikhalkin in \cite{mikhalkin2017quantum}.
\end{rem}

Such an enumerative problem where tropical curves have constraints imposed on their unbounded ends by a $2$-form also appear in the work of T. Mandel \cite{mandel2015scattering}. In his paper, the author deals with the refinement of some classical invariants appearing in the balancing of some scattering diagrams, thus expanding to higher dimension the refined approach of S.A. Filippini and G. Stoppa \cite{filippini2012block} to the tropical vertex group considered by M. Gross, B. Siebert and R. Pandharipande \cite{gross2010tropical}.

The tropical solutions to the $\omega$-problem are trivalent, and are counted with the following multiplicity:
$$B_\Gamma^{K_\omega}=\prod_{V}(q^{a_V\wedge b_V}-q^{b_V\wedge a_V})\in\ZZ[\Lambda^2 N/K_\omega ],$$
where $a_V$ and $b_V$ are two among the three outgoing slopes at the vertex $V$, chosen such that $\omega(a_V,b_V)>0$. The space $K_\omega$ is the space generated by the $a_V\wedge b_V$ in the kernel of $\omega:a\wedge b\in\Lambda^2 N\mapsto\omega(a,b)\in\ZZ$. Implicitly in the notation, the exponents of the variable $q$ are taken modulo $K_\omega$. For a generic choice of $\omega$, one has $K_\omega=0$ and there is no quotient. This multiplicity is a  product over the vertices of the curve, and can be seen as a Laurent polynomial in several variables, whose number depends on the rank of the quotient lattice $\Lambda^2 N/K_\omega$. This multiplicity also appears in \cite{shustin2018refined} to prove a local invariance statement in a similar but different setting. One recovers the refined multiplicity from Mandel \cite{mandel2015scattering} by evaluating the $2$-form $\omega$ on the exponents, and divising by a suitable power of $q-q^{-1}$. However, and surprisingly, such an evaluation is not necessary to get an invariance statement.

\begin{rem}
Getting back to the planar case, denoting by $\mathrm{det}$ the $2$-form, and identifying $\Lambda^2 N=\ZZ\det$ with $\ZZ$, the refined multiplicity $B_\Gamma^{K_\mathrm{det}}$ is given by
$$B_\Gamma^{K_\mathrm{det}}=\prod_V (q^{m_V}-q^{-m_V}).$$
It is thus equal to the refined multiplicity of Block-G\"ottsche up to a factor $q-q^{-1}$ at each vertex and a dilatation by $2$ in the exponents. It comes from the fact that $q^m-q^{-m}$ is always divisible by $q-q^{-1}$, but in the general case, there is no such canonical choice to divide all vertex multiplicities.
\end{rem}

We have the following invariance statement, asserting that the curve count does not depend on the choice of the hyperplanes $\mathcal{H}_e$ nor the choice of $D$.

\begin{theom}[\ref{theorem invariance moment problem}]
The count of solutions to the $\omega$-problem using the refined multiplicity $B_\Gamma^{K_\omega}$ does not depend on the choice of $\mathcal{H}_e$ and $D$ as long as it is generic. It only depends on the choice of $\omega$ and the unbounded end $e_0$.
\end{theom}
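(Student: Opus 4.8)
The plan is to run a wall-crossing argument over the space of constraints. Let $\mathcal{C}$ be the space of admissible data: for each $n_e\in\Delta$ an affine hyperplane $\mathcal{H}_e$ with the prescribed slope $\ker\iota_{n_e}\omega$, together with the plane $D$ transverse to $\mathcal{H}_{e_0}$. The slopes being fixed in advance, $\mathcal{C}$ is connected (each $\mathcal{H}_e$ varies in a one-parameter family of translates, and $D$ ranges over a connected open set of transverse planes). First I would carry out the standard dimension count to show that for $c$ outside a closed subset $\mathcal{D}\subset\mathcal{C}$ of codimension at least one, the solutions are finitely many and all trivalent, so that $\mathcal{N}(c)=\sum_\Gamma B_\Gamma^{K_\omega}$ is defined; and that $\mathcal{N}$ is locally constant on $\mathcal{C}\setminus\mathcal{D}$, since the combinatorial type of each solution, hence its multiplicity, is unchanged under small deformations of $c$. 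It then suffices to check that $\mathcal{N}$ does not jump across the codimension-one strata (walls) of $\mathcal{D}$.

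Next I would analyse the walls. As the curves are rational, hence trees, the generic degeneration along a wall is the collapse of a single bounded edge $E$, producing a four-valent vertex; I would separately verify that the non-transversality of the $D$-constraint and the remaining walls are of higher codimension or are treated identically. Away from a neighbourhood of $E$ the solution is rigid and unchanged as $c$ crosses the wall, so invariance reduces to a purely local statement. Writing $u_1,u_2,u_3,u_4$ for the four directions emanating from the collapsed vertex, with $u_1+u_2+u_3+u_4=0$, the three trivalent resolutions $12|34$, $13|24$, $14|23$ contribute the local multiplicities
$$\gen{u_1\wedge u_2}\gen{u_3\wedge u_4},\qquad \gen{u_1\wedge u_3}\gen{u_2\wedge u_4},\qquad \gen{u_1\wedge u_4}\gen{u_2\wedge u_3},$$
where I abbreviate $\gen{w}=q^{w}-q^{-w}$ for $w\in\Lambda^2N/K_\omega$. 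The local model attached to the wall records which of these resolutions are realizable, i.e. give a bounded edge of positive length compatible with the fixed moments of the four external branches, on each side of the wall.

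The crux is the algebraic identity
$$\gen{u_1\wedge u_2}\gen{u_3\wedge u_4}-\gen{u_1\wedge u_3}\gen{u_2\wedge u_4}+\gen{u_1\wedge u_4}\gen{u_2\wedge u_3}=0,$$
a Plücker/Jacobi-type three-term relation holding in $\ZZ[\Lambda^2N/K_\omega]$ and using only $u_4=-(u_1+u_2+u_3)$ together with the antisymmetry $\gen{-w}=-\gen{w}$; it is checked by expanding each factor into $q^{\pm}$ monomials. The subtlety, and the place where I expect the real work to lie, is that the vertex multiplicity is normalised by the orientation convention $\omega(a_V,b_V)>0$, so the genuine $\omega$-positive multiplicity of the resolution $ij|kl$ differs from the term above by the sign $\mathrm{sgn}\,\omega(u_i\wedge u_j)\cdot\mathrm{sgn}\,\omega(u_k\wedge u_l)$. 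I would therefore have to show that on the two sides of the wall exactly the correct resolutions are realizable so that, after inserting these signs, the three-term relation rearranges into an equality of the form "(sum over resolutions on one side) $=$ (sum over resolutions on the other side)". Concretely one expects a wall to exchange one resolution against the two others, the relevant instance of the identity reading $\gen{\cdot}\gen{\cdot}=\gen{\cdot}\gen{\cdot}+\gen{\cdot}\gen{\cdot}$ once the signs are accounted for; this interplay between the $\omega$-orientation and realizability is precisely the "non-vertex" sign information, and making it consistent across all wall types is the main obstacle.

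Finally, granting local invariance at every wall, the count $\mathcal{N}$ is constant on each chamber and unchanged across walls, hence constant on the connected space $\mathcal{C}$; it therefore depends only on the data held fixed throughout, namely $\omega$ and the distinguished end $e_0$.
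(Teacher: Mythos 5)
Your frame (wall-crossing over the connected space of constraints, reduction to the collapse of one bounded edge into a quadrivalent vertex, and the three-term identity for $\gen{w}=q^{w}-q^{-w}$, which is indeed a correct identity given $u_4=-(u_1+u_2+u_3)$) matches the paper's proof. But you have explicitly deferred the decisive step --- ``making the interplay between the $\omega$-orientation and realizability consistent across all wall types'' --- and that step is the entire content of the argument, so as written the proposal has a genuine gap rather than a proof. The paper closes this gap by leveraging the already-established invariance of the count with \emph{complex} multiplicity $m^{\CC,\omega,e_0}$: the repartition of the three resolutions on the two sides of a wall is read off from the signs of the three terms in the relation
\begin{equation*}
\omega(a_1,a_2)\,\omega(a_1+a_2,a_3)+\omega(a_1,a_3)\,\omega(a_2,a_1+a_3)+\omega(a_2,a_3)\,\omega(a_2+a_3,a_1)=0,
\end{equation*}
after normalizing the labels so that $\omega(a_i,a_{i+1})>0$ cyclically and $\omega(a_2,a_3)>\omega(a_1,a_2)$. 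One term is then positive and sits alone on one side of the wall, the other two are negative and sit on the other side, and the only case distinction left is the sign of $\omega(a_1,a_3)$, which decides whether $13//24$ joins $12//34$ or $14//23$. In each of the two cases one writes the refined three-term identity with exponents chosen $\omega$-positive and checks that it takes exactly the form ``sum on one side $=$ sum on the other side.'' Without this input from the complex count (or an equivalent direct analysis of which resolutions have a positive length for the new edge), your identity by itself does not yield local invariance, since the same identity would ``prove'' invariance for any assignment of resolutions to sides.

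A second point you do not address is why the quotient by $K_\omega$ is needed at all. If one of the three resolutions has a flat restriction of $\omega$, i.e.\ $\omega(\pi_W)=0$ for its new vertex $W$, then that combinatorial type never contributes a solution for generic constraints, yet its unreduced refined multiplicity $q^{\pi_W}-q^{-\pi_W}$ need not vanish in $\ZZ[\Lambda^2N]$; the three-term identity would then equate the two realizable sides only up to this spurious term. Reducing the exponents modulo $K_\omega$ kills precisely these contributions and restores the equality of the two remaining sides. Your proposal carries $K_\omega$ in the notation but never uses it, so this degenerate case would be left unhandled.
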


In particular the curve count does not depend on the choice of $D$, including the choice of its slope. The obtained invariant is denoted by $\mathcal{B}_\Delta^{\omega,e_0}$. Thus, we get a family of invariants $\mathcal{B}_\Delta^{\omega,e_0}\in\ZZ[\Lambda^2 N/K_\omega]$. Furthermore, we prove that this map is continuous in the following sense. The map $\omega\mapsto K_\omega$ is constant on the cones of some fan $\Omega_\Delta$ in $\Lambda^2 M_\RR$. Denoting $K_\sigma$ the value on some cone $\sigma$ of $\Omega_\Delta$, for $\tau\prec\sigma$, one has $K_\sigma\subset K_\tau$, leading to a projection $p_\sigma^\tau:\ZZ[\Lambda^2 N/K_\sigma]\rightarrow \ZZ[\Lambda^2 N/K_\tau]$. We then have the following theorem.

\begin{theom}[\ref{theorem continuity moment}]
The function $\omega\mapsto\mathcal{B}_\Delta^{\omega,e_0}$ is constant on the interior of the cones of $\Omega_\Delta$, and for $\tau\prec\sigma$, the value on $\tau$ is the image of the value on $\sigma$ under the projection $p_\sigma^\tau$.
\end{theom}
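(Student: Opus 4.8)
The plan is to treat the two assertions separately, relying throughout on the following observation. Let $S_\Delta\subset\Lambda^2 N$ denote the finite set of wedge products $a\wedge b$ that can occur at a trivalent vertex of a degree $\Delta$ tropical curve, with $a,b$ two of the three outgoing slopes. Each non-zero $c\in S_\Delta$ cuts out a hyperplane $\{\omega:\omega(c)=0\}$ in $\Lambda^2 M_\RR$, and $\Omega_\Delta$ is the fan of this arrangement; by construction the set of $c\in S_\Delta$ with $\omega(c)=0$, and hence both $K_\omega$ and all the signs $\mathrm{sign}\,\omega(c)$, is constant on the relative interior of each cone. Consequently, on $\mathrm{int}(\sigma)$ the multiplicity $B_\Gamma^{K_\sigma}$ of a fixed combinatorial type $\Gamma$ is a fixed element of $\ZZ[\Lambda^2 N/K_\sigma]$: the ambient quotient does not move, and the orientations $a_V,b_V$ (determined by $\omega(a_V,b_V)>0$) do not change.

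For the constancy on $\mathrm{int}(\sigma)$ I would re-run the wall-crossing argument underlying the invariance theorem, now in the enlarged parameter space $\mathrm{int}(\sigma)\times\{\text{admissible }(\mathcal{H}_e,D)\}$. Along a generic path the solution set deforms through finitely many flips, at each of which a cluster of combinatorial types is exchanged. The local cancellation established in the proof of the invariance theorem only involves the multiplicity formula and the quotient lattice $\Lambda^2 N/K_\omega$, both of which are constant on $\mathrm{int}(\sigma)$ by the observation above; hence the contributions across every wall still cancel and the total count is unchanged as $\omega$ moves inside $\sigma$.

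For the face relation I would argue by a limit. Fix $\omega_\tau\in\mathrm{int}(\tau)$ and pick $\omega_s=\omega_\tau+s\,v$ with $s>0$ small and $v$ pointing into $\sigma$, so that $\omega_s\in\mathrm{int}(\sigma)$. The projection $p_\sigma^\tau$ is induced by the surjection $\Lambda^2 N/K_\sigma\twoheadrightarrow\Lambda^2 N/K_\tau$, i.e.\ it reduces the exponents of $q$ modulo $K_\tau$, so $p_\sigma^\tau\bigl(B_\Gamma^{K_\sigma}\bigr)=B_\Gamma^{K_\tau}$ for every combinatorial type $\Gamma$ (the orientations agreeing wherever $\omega_\tau(a_V,b_V)\neq 0$). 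As $s\to 0^+$ the constraint slopes $\ker\iota_{n_e}\omega_s$ converge to $\ker\iota_{n_e}\omega_\tau$ and the $\omega_s$-solutions converge to $\omega_\tau$-solutions, the edge slopes being integral and thus constant along the family. A solution $\Gamma$ whose limit has a vertex $V$ with $\omega_\tau(a_V,b_V)=0$ acquires $a_V\wedge b_V\in K_\tau$, so its factor $q^{a_V\wedge b_V}-q^{-(a_V\wedge b_V)}$ becomes $q^0-q^0=0$ in $\ZZ[\Lambda^2 N/K_\tau]$ and the curve contributes nothing after $p_\sigma^\tau$. The surviving solutions are exactly those with no $\omega_\tau$-isotropic vertex, and these are in multiplicity-preserving bijection with the $\omega_\tau$-solutions; summing and invoking part (a) yields $p_\sigma^\tau(\mathcal{B}_\Delta^{\omega,e_0})=\mathcal{B}_\Delta^{\omega_\tau,e_0}$ for every $\omega\in\mathrm{int}(\sigma)$.

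The main obstacle is precisely this limit step. One must show that, for $s$ small enough, every $\omega_\tau$-solution is the limit of a unique family of $\omega_s$-solutions, and that no solution escapes to, or materialises from, infinity as $s\to 0^+$; this is a properness statement about the space of solutions of the $\omega$-problem as $\omega$ approaches the wall $\tau$. Establishing it, together with the verification that the degenerating families are exactly those developing an $\omega_\tau$-isotropic vertex whose wedge product lands in $K_\tau$, is where the real work lies. By contrast, the constancy in part (a) is comparatively routine once the local wall-crossing computation from the invariance theorem is available.
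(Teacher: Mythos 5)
Your overall strategy is aligned with the paper's: constancy on the interior of each cone, plus a degeneration-to-the-face argument in which the combinatorial types acquiring an $\omega_\tau$-isotropic vertex are exactly those killed by $p_\sigma^\tau$. For part (a) you take a slightly different route (re-running the wall-crossing in the enlarged parameter space $\mathrm{int}(\sigma)\times\{(\mu,D)\}$) where the paper instead fixes a generic $(\mu,D)$ and shows directly, via the implicit function theorem applied to $\mathrm{ev}^{\omega,e_0}$ as a function of $(\omega,\Gamma)$, that the set of contributing combinatorial types is locally constant in $\omega$; your version works but needs the (true, and worth stating) remark that the walls and the side-repartition around them are governed by the signs $\mathrm{sign}\,\omega(\pi_V)$, which are constant on $\mathrm{int}(\sigma)$.

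The genuine gap is the one you flag yourself in part (b) and then leave open: the claim that every $\omega_\tau$-solution is the limit of a unique family of $\omega_s$-solutions and that nothing escapes to or appears from infinity. This is where the statement is actually proved, so declaring it ``where the real work lies'' without doing it leaves the proof incomplete. The paper closes it cheaply by running the deformation in the opposite direction and exploiting linearity: fix $\omega_\tau\in\tau$ and a generic $(\mu,D)$; there are finitely many combinatorial types, on each of which $\mathrm{ev}^{\omega,e_0}$ is linear with determinant equal to $m_C^{\CC,\omega,e_0}$. For every type with $m_C^{\CC,\omega_\tau,e_0}\neq 0$ the implicit function theorem (equivalently, Cramer's rule) shows that the solution, and the strict positivity or negativity of its edge lengths, persists for all $\omega$ near $\omega_\tau$; so such types contribute to $\mathcal{B}_\Delta^{\omega_s,e_0}$ if and only if they contribute to $\mathcal{B}_\Delta^{\omega_\tau,e_0}$. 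The only types that can newly appear for small $s>0$ are those with $m_C^{\CC,\omega_\tau,e_0}=0$, and by the product formula $m_C^{\CC,\omega,e_0}=|\omega(n_{e_0},\delta)\prod_V\omega(\pi_V)|$ together with genericity of $\delta$, such a type has some $\pi_V\in K_\tau$, hence $B_C^{K_\tau}=0$; your ``properness'' worry thus evaporates because no compactness is needed, only the finiteness of $\mathcal{C}_\Delta^{\mathrm{top}}$ and the vanishing modulo $K_\tau$ of the multiplicities of the degenerate types. You should also record the small but necessary compatibility $p_\sigma^\tau(B_\Gamma^{K_\sigma})=B_\Gamma^{K_\tau}$, which holds because the orientation condition $\omega(\pi_V)>0$ used to normalize $\pi_V$ either persists from $\sigma$ to $\tau$ or becomes irrelevant when $\omega_\tau(\pi_V)=0$.
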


\subsection{Main results}

Theorem \ref{theorem invariance moment problem} is an invariance result with little improvement compared to already known tropical refined invariants since its proof relies on the fact that for the $\omega$-problem, the complex multiplicity of a tropical curve is still a product over its vertices, just as in the planar case. To some extent, it might even be seen as a particular case of the balancing of scaterring diagrams done by Mandel in \cite{mandel2015scattering}. However, this especially simple observation can then be used to provide an invariance statement in a larger class of enumerative problems for which the complex multiplicity is no longer a product over the vertices.

Consider rational tropical curves of degree $\Delta$ in $N_\RR$, and for each $n_e\in\Delta$, choose $L_e$ a sublattice of $N/\gen{n_e}$ such that $\sum_e \mathrm{cork}L_e=|\Delta|+r-3$. Let $\mathcal{L}_e$ be a affine subspace of $N_\RR/\gen{n_e}$ with slope $L_e$. We look for rational tropical curves of degree $\Delta$ such that image of the unbounded end $e$ in $N_\RR/\gen{n_e}$ lies in $\mathcal{L}_e$. The dimension count ensures that if the $\mathcal{L}_e$ are chosen generically, there is a finite number of solutions. Now, let $\omega$ be a $2$-form such that
$$m^{\CC,(L_e)}_\Gamma=0\Rightarrow m_\Gamma^{\CC,\omega,e_0}=0,$$
where $m^{\CC,(L_e)}$ denotes the complex multiplicity in the hereby considered enumerative problem, while $m_\Gamma^{\CC,\omega,e_0}$ denotes the complex multiplicity in the $\omega$-problem associated to $\omega$. See section \ref{section complex multiplicity} for a proper definition of the complex multiplicity. The choice $\omega=0$ always suits the hypothesis, but provides a trivial result. However, in many cases, it is possible to choose a non-trivial $\omega$, see example \ref{example wide class}. Under this technical hypothesis, we have the following invariance result.

\begin{theom}[\ref{theorem invariance general}]
There exists signs $\varepsilon_\Gamma$ only depending on the combinatorial type of $\Gamma$ and the constraints $(L_e)$ such that the count of solutions with multiplicity $\varepsilon_\Gamma B_\Gamma^{K_\omega}$ only depends on the choice of the slopes $L_e$ and not the affine subspaces $\mathcal{L}_e$.
\end{theom}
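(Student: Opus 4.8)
The plan is to prove the invariance by the standard deformation argument: connect any two generic tuples $(\mathcal{L}_e)$ by a generic path in the space of configurations with fixed slopes $(L_e)$, and show that the weighted count $\sum_\Gamma \varepsilon_\Gamma B_\Gamma^{K_\omega}$ is locally constant along this path. The space of admissible tuples $(\mathcal{L}_e)$ is a product of affine spaces, hence connected, and the non-generic locus — where some solution curve fails to be rigid, i.e. admits a one-parameter deformation — is a finite union of walls of real codimension one. At a generic point of such a wall there is a single solution $\Gamma_0$ carrying one four-valent vertex $V_0$, all of whose other vertices are trivalent and whose other bounded edges have positive length; on the two sides of the wall, $V_0$ resolves into the finitely many trivalent curves obtained by splitting it off along a new bounded edge in a way compatible with the balancing condition and with the constraints. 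Matching the curves appearing on the two sides, everything reduces to a local identity of the form
\[
\sum_{\Gamma\text{ on }+}\varepsilon_\Gamma B_\Gamma^{K_\omega}\;=\;\sum_{\Gamma\text{ on }-}\varepsilon_\Gamma B_\Gamma^{K_\omega}.
\]

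Because $B_\Gamma^{K_\omega}=\prod_V\bigl(q^{a_V\wedge b_V}-q^{b_V\wedge a_V}\bigr)$ is a product over the vertices, the factors coming from the vertices untouched by the resolution of $V_0$ are common to every $\Gamma$ occurring at the wall, and can be pulled out of both sums. What remains is an identity between the products of the new vertex factors created by each resolution, weighted by the signs $\varepsilon_\Gamma$. This is precisely the shape of the local balancing that underlies Theorem~\ref{theorem invariance moment problem}: there the very same refined vertex multiplicity is used, the complex multiplicity is a genuine product over the vertices, and the invariance across a four-valent degeneration was already established. I would therefore extract from the proof of Theorem~\ref{theorem invariance moment problem} the purely algebraic identity among the factors $q^{a\wedge b}-q^{b\wedge a}$ and reuse it here verbatim, the only new ingredient being the bookkeeping of the signs.

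The heart of the matter is the definition of the signs $\varepsilon_\Gamma$. In the present setting $m^{\CC,(L_e)}_\Gamma$ is \emph{not} a product over the vertices, so no naive product-over-vertices weight can be made invariant on its own; the discrepancy between $m^{\CC,(L_e)}_\Gamma$ and the product of the vertex contributions is exactly a sign, and this is what I would take as the definition of $\varepsilon_\Gamma$. Concretely, $m^{\CC,(L_e)}_\Gamma$ is the absolute value of a determinant recording how the constraints $(L_e)$ cut out $\Gamma$ among curves of its combinatorial type, and $\varepsilon_\Gamma$ is defined as the sign of this determinant relative to the orientation coming from the product structure of the $\omega$-problem. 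The hypothesis $m^{\CC,(L_e)}_\Gamma=0\Rightarrow m^{\CC,\omega,e_0}_\Gamma=0$ guarantees that $B_\Gamma^{K_\omega}$ vanishes on every curve that does not genuinely contribute, so $\varepsilon_\Gamma$ only needs to be defined on curves with $m^{\CC,(L_e)}_\Gamma\neq 0$, and the sign-corrected multiplicity is well defined and depends only on the combinatorial type and the constraints $(L_e)$.

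The main obstacle, and the place where the real work lies, is showing that this globally defined sign behaves correctly at every wall. When $V_0$ resolves, the signs of the resulting curves should differ from those prescribed by the $\omega$-problem by a factor that is itself a product over the vertices affected by the resolution, so that after pulling out the common factors the sign-corrected local identity coincides exactly with the one furnished by Theorem~\ref{theorem invariance moment problem}. I expect the delicate step to be controlling the jump of $\varepsilon_\Gamma$ across each wall: the local combinatorics of solutions here is richer than in the $\omega$-problem precisely because the complex multiplicity is no longer a product over the vertices, so one must verify that the change of orientation of the cutting-out determinant under the contraction of the edge at $V_0$ is governed by the slopes at the two new vertices alone. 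It is this compatibility — rather than any intrinsic property of the sign — that forces the definition of $\varepsilon_\Gamma$ through the comparison of the two complex multiplicities, and establishing it is what completes the reduction to the already-known refined identity.
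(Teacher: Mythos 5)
Your strategy and, crucially, your definition of the signs coincide with the paper's: $\varepsilon_\Gamma$ compares the orientation that $\mathrm{ev}^{(L_e)}$ induces on the cone of $\mathrm{Comb}(\Gamma)$ with the one induced by $\mathrm{ev}^{\omega,e_0}$, and the wall-crossing identity is to be transported from Theorem \ref{theorem invariance moment problem}. But the step you explicitly defer --- checking that ``this globally defined sign behaves correctly at every wall'' --- is not a technical remainder; it is the entire content of the theorem, and as written your proposal does not establish it. (A side remark: the sentence ``the discrepancy between $m^{\CC,(L_e)}_\Gamma$ and the product of the vertex contributions is exactly a sign'' is false as stated, since the absolute values differ as well; the operative definition you give in the following sentence is the correct one.)

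The verification you are worried about is, however, much less delicate than you fear, and supplying the following observation completes your argument in the form of the paper's second proof. For a fixed enumerative problem, local invariance of a system of multiplicities $(m_C)_{C\in\mathcal{C}_\Delta^\mathrm{top}}$ at a wall $W$ is exactly the vanishing along $W$ of the boundary of the chain $\sum_C m_C\,(C,\ori(C))$, where $\ori(C)$ is the orientation induced by that problem's evaluation map: two adjacent types lie on the same side of $W$ if and only if they induce the same boundary orientation $\partial_W\ori$ on $W$ (this is the Cramer's-rule sign analysis from the appendix). Theorem \ref{theorem invariance moment problem} thus says $\partial\bigl(\sum_C B_C^{K_\omega}(C,\ori_\omega(C))\bigr)=0$, and rewriting this same chain in the orientations $\ori(C)$ coming from $\mathrm{ev}^{(L_e)}$ multiplies each coefficient by precisely your $\varepsilon_C=\ori_\omega(C)/\ori(C)$, giving $\partial\bigl(\sum_C \varepsilon_C B_C^{K_\omega}(C,\ori(C))\bigr)=0$ with no per-vertex bookkeeping and no ``jump of $\varepsilon_\Gamma$'' to control: being a cycle is independent of the orientations used to express the chain. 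The only role of the hypothesis $m^{\CC,(L_e)}_\Gamma=0\Rightarrow m^{\CC,\omega,e_0}_\Gamma=0$ is to guarantee that types on which $\mathrm{ev}^{(L_e)}$ is non-injective --- whose orientation $\ori(C)$, hence whose $\varepsilon_C$, is ill-defined, and which never produce solutions for generic constraints --- satisfy $B_C^{K_\omega}=0$ and therefore cannot spoil the boundary relation.
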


The obtained invariant is denoted by $\mathcal{B}_\Delta^{K_\omega}(L_e)$. The proof of the existence of the signs $\varepsilon_\Gamma$ is combined with the proof of invariance since the signs are in fact chosen so that the count of solutions remains invariant. Moreover, the signs $\varepsilon_\Gamma$ can be interpreted as orientations on the cones of the moduli space of tropical curves. With this interpretation, the multiplicities leading to invariant counts can be viewed as cycles in the \textit{star complex} of the moduli space. The star complex of a fan $\mathcal{F}$ is the CW-complex $\left(\mathcal{F}\backslash\{0\}\right)/\RR_+^*$. It can be seen as the space over which $\mathcal{F}$ is the cone. Such an interpretation may lead to further interesting invariants.

The idea that drives the proof is that with an additional technical assumption, a multiplicity that leads to an invariant in a specific enumerative problem may be used to get a new invariant in another enumerative problem.

We also prove a statement of continuity of the invariants $\mathcal{B}_\Delta^{K_\omega}(L_e)$ in terms of the slopes $(L_e)$. See Theorem \ref{theorem continuity general}.

Finally, we prove some enhancements of the main theorem \ref{theorem invariance general} by imposing conditions on marked points and not only unbounded ends, and study the possibility of replacing the affine constraints by tropical cycles.

\subsection{Motivations}

The study of the aforementioned tropical enumerative problems using the proposed refined multiplicity is justified by their connection to a family of classical enumerative problems, through the use of a correspondence theorem. We now explicit this connection.

In the planar case, the classical counterpart to the tropical enumerative problems are as follows: let $\CC\Delta$ be a toric surface endowed with its canonical real structure, the problem consists in finding the real rational curves in $\CC\Delta$ with fixed intersection with the toric boundary of $\CC\Delta$. This intersection comprises real points and pairs of complex conjugated points. This is the problem studied by Mikhalkin in \cite{mikhalkin2017quantum}. The general case takes place in a toric variety $\CC\Delta$, and consists in finding the real rational curves whose intersection points with the toric boundary belong to some chosen suborbits of the toric variety.

As mentioned above, in the planar case, Mikhalkin proved that there exists a refined signed count of the solutions to each classical enumerative problem that leads to an invariant. The refinement is provided by the value of quantum index, which up to a shift corresponds to the log-area of the oriented curves solutions to the problem. Recall that the log-area defined by the integral of the volume form of $N_\RR\simeq\RR^2$ pull-backed by the logarithmic map $\mathrm{Log}:N\otimes\CC^*\subset\CC\Delta\longrightarrow N_\RR$. For more details, see \cite{mikhalkin2017quantum,blomme2020computation}. Preferably to the Block-G\"ottsche multiplicity $m_\Gamma^{BG}$, the refined multiplicity $B_\Gamma^{K_\mathrm{det}}=\prod_V (q^{m_V}-q^{-m_V})$ is the one involved in the correspondence result of Mikhalkin in \cite{mikhalkin2017quantum} that relates the tropical refined invariant to the classical refined invariant. As a matter of fact, denoting by $\mathcal{N}_\Delta$ the classical refined invariant introduced in the same paper in case all intersection points are real, one has 
$$\mathcal{N}_\Delta=\mathcal{B}_\Delta^{\omega,e_0}.$$
In other terms, the multiplicity furnished by the correspondence theorem to count a tropical solution $\Gamma$, which corresponds to the signed refined number of solutions to the classical enumerative problem which tropicalize to $\Gamma$, is $B_\Gamma^{K_\mathrm{det}}$. This is no surprise since through the use of the correspondence theorem, the product over the vertices of the quantity $q^{a_V\wedge b_V}-q^{-a_V\wedge b_V}$ carries a deep connection to the set of all families of real oriented curves tropicalizing to $\Gamma$. This connection stays in higher dimension.

In higher dimension, although some signed counts for real curves are known, for instance by Welschinger \cite{welschinger2005enumerative}, there are at the time of writing no generalization for the refined signed count of Mikhalkin. A desirable generalization would be to make a signed count of the solutions to the aforementioned classical enumerative problem according to the value of a quantum class for which we now sketch a definition. For a toric variety $\CC\Delta$ with dense torus $N\otimes\CC^*$, one still has a logarithmic map $\mathrm{Log}:N\otimes\CC^*\longrightarrow N_\RR$. However, there is no canonical choice of a $2$-form to pull-back in order to measure the area of an oriented curve (except maybe $\omega$ in the case of the $\omega$-problem). For an oriented real curve $\varphi:S\dashrightarrow\CC\Delta$, the map that associates to a $2$-form $\varpi\in\Lambda^2 M_\RR$ the log-area $\int_{\varphi(S)}\mathrm{Log}^*\varpi$ of the curve under $\varpi$ is linear, and up to a shift corresponds to an element of $\mathrm{Hom}(\Lambda^2 M,\ZZ)\simeq\Lambda^2 N$. This element is called the \textit{quantum class} and is introduced in \cite{blomme2020phd}, to which we refer for more details. It is a generalization in higher dimension of the quantum index introduced by Mikhalkin in \cite{mikhalkin2017quantum} and shares many common properties with it. In particular it can be computed close to the tropical limit as a sum over the vertices of the limit tropical curve. The refined multiplicity of a tropical curve thus appears as a signed count of the solutions close to the tropical limit, refined by the value of their quantum class.

The existence of tropical invariants in the tropical enumerative problems considered in the paper proves the existence of a refined signed count of classical curves close to the tropical limit using this quantum class. Moreover, it suggests the existence of classical refined invariants generalizing the refined invariants from \cite{mikhalkin2017quantum} in the general case. As the definition of the signs in the classical setting is currently unknown, this papers only deals with the tropical side of the invariants, and will hopefully be connected to some classical refined invariant in a future work.

\subsection{Plan of the paper}

In the second section we gather the notions about tropical curves, their moduli spaces and the definition of the various involved multiplicities. The proofs for the computation of the complex multiplicities and the invariance statement it leads to are postponed to the appendices. Next, we introduce the $\omega$-\textit{problem}, a specific kind of enumerative problem in higher dimension for which we prove an invariance statement. This result is used in the fourth section to provide a new invariance result in the case of larger class of tropical enumerative problems. In the fifth section we study the first properties of the new refined invariants, more precisely their dependence on the constraints of the enumerative problems, and their possible enhancements. Last but not least, we give some examples.

\section{Tropical forest}
\label{section tropical curves}

\subsection{Abstract tropical curves}

Let $\overline{\Gamma}$ be a finite connected graph without bivalent vertices. Let $\overline{\Gamma}_\infty^0$ be the set of $1$-valent vertices of $\overline{\Gamma}$, and $\Gamma=\overline{\Gamma}\backslash\overline{\Gamma}^0_\infty$. We denote by $\Gamma^0$ the set of vertices of $\Gamma$, and by $\Gamma^1$ the set of edges of $\Gamma$. The non-compact edges resulting from the eviction of $1$-valent vertices are called \textit{unbounded ends}. The set of unbounded ends is denoted by $\Gamma^1_\infty$, while its complement, the set of bounded edges, is denoted by $\Gamma_b^1$. An element of $\Gamma^1_\infty$ is usually denoted by $e$.  Let $l:\gamma\in\Gamma_b^1\mapsto |\gamma|\in\RR_+^*=]0;+\infty[$ be a function, called length function. It endows $\Gamma$ with the structure of a metric graph by decreting that a bounded edge $\gamma$ is isometric to $[0;|\gamma|]$, and an unbounded end is isometric to $[0;+\infty[$.

\begin{defi}
Such a metric graph $\Gamma$ is called an \textit{abstract tropical curve}.
\end{defi}

An isomorphism between two abstract tropical curves $\Gamma$ and $\Gamma'$ is an isometry $\Gamma\rightarrow\Gamma'$. In particular an automorphism of $\Gamma$ does not necessarily respect the labeling of the unbounded ends since it only respects the metric. Therefore, an automorphism of $\Gamma$ induces a permutation of the set $\Gamma^1_\infty$ of unbounded ends.

An abstract tropical curve is said to be \textit{rational} if the first betti number $b_1(\Gamma)$ of the underlying graph, also called \textit{genus} of the graph, is equal to $0$. In graph theory, it is also called a \textit{tree}.

\subsection{Parametrized tropical curves}

Let $N$ be a lattice of rank $r\geqslant 2$, and $M=\mathrm{Hom}(N,\ZZ)$ be the dual lattice, which has the same rank. We denote by $N_\RR=N\otimes_\ZZ\RR$ the associated vector space, isomorphic to $\RR^r$. We now consider tropical curves inside $N_\RR$.

\begin{defi}
A parametrized tropical curve in $N_\RR$ is a pair $(\Gamma,h)$, where $\Gamma$ is an abstract tropical curve and $h:\Gamma\rightarrow N_\RR$ is a map such that:
\begin{itemize}
\item the restriction of $h$ to each edge of $\Gamma$ is affine with slope in $N$,
\item at each vertex $V\in\Gamma^0$, one has the \textit{balancing condition}: let $e_1,\dots,e_k$, be the edges adjacent to $V$, and let $u_1,\dots,u_k\in N$ be the outgoing slope of $h$ on them respectively, then one has
$$\sum_1^k u_i=0\in N.$$
\end{itemize}
Two parametrized tropical curves $(\Gamma,h)$ and $(\Gamma',h')$ are isomorphic if it exists an isometry $\varphi:\Gamma\rightarrow\Gamma'$ such that $h=h'\circ\varphi$. A parametrized tropical curve is \textit{rational} if the underlying abstract tropical curve is itself rational.
\end{defi}

For a parametrized tropical curve $h:\Gamma\rightarrow N_\RR$, the set of slopes of $h$ on the unbounded ends is called the \textit{degree} of the curve and is usually denoted by $\Delta$. It is a multiset of elements of $N$, and due to balancing condition, one has
$$\sum_{e\in\Gamma^1_\infty}n_e=0.$$

An abstract tropical curve $\Gamma$ is said to be \textit{trivalent} if every vertex $V\in\Gamma^0$ is adjacent to precisely three edges. Let $h:\Gamma\rightarrow N_\RR$ be a parametrized tropical curve with $\Gamma$ trivalent. For a vertex $V$, let $a_V,b_V,c_V$ be the outgoing slopes of $h$, satisfying $a_V+b_V+c_V=0$. Then we define the associated bivector
$$\pi_V=a_V\wedge b_V=b_V\wedge c_V=c_V\wedge a_V\in\Lambda^2 N,$$
defined up to sign.

If a tropical curve $\Gamma$ is not trivalent, we define the \textit{overvalence} of the graph to be
$$\mathrm{ov}(\Gamma)=\sum_{V\in\Gamma^0}\mathrm{val}(V)-3,$$
where $\mathrm{val}(V)$ is the valency of a vertex. The overvalence measures the default of trivalency of the graph.

\begin{rem}
If $N$ is of rank $2$, \textit{i.e.} $r=2$, then $\pi_V$ is a multiple of a generator of $\Lambda^2 N$, which is a lattice of rank $1$. The absolute value of the scalar is called \textit{vertex multiplicity}.
\end{rem}

\subsection{Moduli space of rational tropical curves}

We now focus on the structure of the \textit{moduli spaces} of rational tropical curves, both abstract and parametrized. These spaces serve as parameter spaces for tropical curves. They are the domain of some suitable \textit{composed evaluation maps} allowing us to work conveniently with the enumerative problems considered in the following sections.

\begin{defi}
Let $\Gamma$ be an abstract rational tropical curve. The \textit{combinatorial type} of $\Gamma$, denoted by $\mathrm{Comb}(\Gamma)$ is the homeomorphism type of the underlying graph, \textit{i.e.} the labeled graph without the metric.
\end{defi}

Conversely, to give a graph a tropical structure, one just needs to assign a length to the bounded edges of the graph. If the curve is \textit{trivalent}, there are precisely $|\Gamma_\infty^1|-3$ bounded edges. Otherwise, there are $|\Gamma_\infty^1|-3-\mathrm{ov}(\Gamma)$ bounded edges, where $\mathrm{ov}(\Gamma)$ is the \textit{overvalence} of the graph. The set of tropical curves having the same combinatorial type is then parametrized by $\RR_+^{|\Gamma_\infty^1|-3-\mathrm{ov}(\Gamma)}$, where the coordinates are given by the lengths of the bounded edges.\\

For a given combinatorial type $\mathrm{Comb}(\Gamma)$, the boundary of the orthant $\RR_+^{|\Gamma_\infty^1|-3-\mathrm{ov}(\Gamma)}$ corresponds to tropical curves for which at least one edge has length $0$. Those tropical curves can be identified with curves whose underlying graph has been suitably modified: edges with zero length are deleted and their extremities are merged together. These identifications allow us to glue together the cones corresponding to the finitely many combinatorial types of graphs. One obtains the \textit{moduli space} $\mathcal{M}_{0,|\Gamma_\infty^1|}$ of rational curves with $|\Gamma_\infty^1|$ marked points which has thus the structure of a fan.

The space $\mathcal{M}_{0,m}$ is a simplicial fan of pure dimension $m-3$. The top-dimensional cones parametrize curves whose underlying graph is trivalent. Codimension 1 cones, which are called \textit{facets}, correspond to combinatorial types whose underlying graph is trivalent except for one vertex which is quadrivalent. Each facet belongs to exactly three faces, which correspond to the three possible \textit{smoothings} of the quadrivalent vertex into a pair of trivalent vertices.

Given an abstract rational tropical curve $\Gamma$, and a degree $\Delta\subset N$ assigning a slope $n_e\in N$ to each unbounded end of $\Gamma$, there is a unique way up to translation to define a map $h:\Gamma\rightarrow N_\RR$ which makes $(\Gamma,h)$ into a parametrized tropical curve of degree $\Delta$: the slope of the unbounded ends is fixed by the degree, and the slope of interior edges is fixed by the balancing condition. The translation is fixed by the position of a vertex in $\Gamma$. This observation gives a bijection between the moduli space $\modpar$ of parametrized rational tropical curves of degree $\Delta$ in $N_\RR$ and $\mathcal{M}_{0,|\Delta|}\times N_\RR$, where the bijection is given by the projection to the underlying graph and the position of a chosen vertex, for instance the vertex adjacent to a fixed unbounded end $e_0$.

The cones of the moduli space $\modpar$ are of the form $\RR_+^{|\Delta|-3}\times N_\RR$, whose tangent space contains the lattice $\ZZ^{|\Delta|-3}\times N$.

\begin{rem}
The multiset $\Delta$ is allowed to contain the $0$ vector. Unbounded ends on which the map $h$ has zero slope are sometimes called \textit{marked points} because their image inside $N_\RR$ is a contracted to a point.
\end{rem}

\begin{rem}
Notice that the set of unbounded ends of parametrized tropical curves in $\modpar$ are canonically identified since they are labeled. Thus, one can speak about $\Gamma_\infty^1$ without having to specify a curve $\Gamma$. The degree $\Delta$ is then a map $e\in\Gamma_\infty^1\mapsto n_e\in N$.
\end{rem}

The set of combinatorial types of parametrized tropical curves of degree $\Delta$ is denoted $\mathcal{C}_\Delta$, where the $\mathcal{C}$ stands both for combinatorial and for cones. Among them, the set of top-dimensional combinatorial types is denoted by $\mathcal{C}_\Delta^\mathrm{top}$. Otherwise, the set of cones of a given dimension is denoted by $\mathcal{C}_\Delta^{(k)}$. A combinatorial type is usually denoted by the letter $C$, whose notation is also used to denote the associated cone in the moduli space $\modpar$. Moreover, for a given parametrized tropical curve $h:\Gamma\rightarrow N_\RR$, we sometimes forget the $h$ in the notation, and use the same notation $\Gamma$ for the curve and the combinatorial type $\mathrm{Comb}(\Gamma)$ that it belongs to.

At some point, we need to orient the cones of the moduli space $\modpar$, which means find an orientation for the underlying lattice $\ZZ^{|\Delta|-3}\times N$ of each cone $C\in\mathcal{C}_\Delta^\mathrm{top}$, or similarly, its linear span. If $C\in\mathcal{C}_\Delta^\mathrm{top}$ is a combinatorial type, associated to an orthant of the moduli space $\modpar$ and denoted by the same letter, we denote by $\ori(C)$ an orientation of the linear span of the cone. If $W$ is a facet adjacent to $C$, $W$ belongs to the boundary of $C$. Then, as the normal bundle of $W$ in $C$ is canonically oriented, an orientation $\ori(C)$ induces an orientation of the tangent bundle to $W$, which is denoted by $\partial_W\ori(C)$. Notice that a reversing of the orientation of $C$ also reverses the induced orientation on $W$.

\subsection{Evaluation map}

For each unbounded end $e\in\Gamma^1_\infty$, let $n_e$ be the slope assigned by the degree $\Delta$. One has a well-defined \textit{evaluation map} on the moduli space $\modpar$, which sends a parametrized tropical curve to the image of the unbounded end inside the quotient $N_\RR/\langle n_e\rangle$. If the slope is $0$, this quotient is just $N_\RR$ itself and the image is a point, justifying the name \textit{marked point}. Otherwise, the image corresponds to the line that contains the image of the unbounded end under $h$. Putting all these maps together, we get the \textit{evaluation map}:
$$\begin{array}{cccl}
\mathrm{ev}: & \mathcal{M}_{0}(\Delta,N_\RR) & \longrightarrow & \prod_{e\in\Gamma^1_\infty} N_\RR/\langle n_e\rangle, \\
 & (\Gamma,h) & \longmapsto & \left( h(p_e)\right)_e \\
\end{array}$$
where $p_e$ is any point chosen in $e\in\Gamma^1_\infty$.\\

By definition, the evaluation map is linear on every cone of $\modpar\simeq\mathcal{M}_{0,|\Delta|}\times N_\RR$ but is not necessarily injective on top-dimensional cones. For instance, it is not the case when some bounded edge has zero slope, or when the combinatorial type contains a flat vertex, which is a vertex whose adjacent edges are contained in a line.

\subsection{Complex multiplicity of tropical curves}
\label{section complex multiplicity}

One main feature of tropical enumerative geometry is to count tropical curves solution to a given enumerative problem $\mathcal{P}$, which means finding the curves which are subject to some constraints. It is often necessary to use suitable multiplicities $m$ to get a count that does not depend on the choice of the constraints when those vary in some family. Such a count using multiplicity $m$ is said to lead to an \textit{invariant} for the problem $\mathcal{P}$. Here, we describe the main multiplicities that are of interest to us, procrastinating the proper introduction to the associated enumerative problems.

\begin{rem}
In some particular enumerative problems, some multiplicities leading to an invariant are provided through the use of some correspondence theorems, such as Mikhalkin's \cite{mikhalkin2005enumerative}, Shustin's \cite{shustin2002patchworking}, Nishinou and Siebert's \cite{nishinou2006toric} or Tyomkin's \cite{tyomkin2017enumeration}. The use of these multiplicities leads to tropical invariants which have the value of some classical invariants, \textit{e.g.} Gromov-Witten invariants, Welschinger invariants, or refined invariants introduced by Mikhalkin in \cite{mikhalkin2017quantum}. Such a relation to classical invariants is not systematic although is sometimes suspected, as it is the case for refined invariants introduced by Block and G\"ottsche \cite{block2016refined}.
\end{rem}

In this section, we recall the definition of the \textit{complex multiplicity}, which is the multiplicity that comes from the correspondence theorem \cite{mikhalkin2005enumerative} in the planar case, or \cite{nishinou2006toric} in the higher dimensional case. Let $\Gamma$ be a parametrized tropical curve of degree $\Delta\subset N$. For each unbounded end $e$, let $L_e$ be a full sublattice of $N/\langle n_e\rangle$, meaning that the quotient is without torsion. In order to get small notations, we denote by $Q_e$ the quotient of $N/\langle n_e\rangle$ by $L_e$, which is a lattice of rank $\mathrm{rk}\ N/\langle n_e\rangle - \mathrm{rk}\ L_e=\mathrm{cork}\ L_e$. We denote by $Q_e^\RR=Q_e\otimes_\ZZ \RR$ the associated vector space. We assume that $\sum_{e\in\Gamma^1_\infty} \mathrm{cork}L_e$ is equal to the dimension of $\modpar$, which is $|\Delta|+r-3$. We then compose the evaluation map with the quotient maps $N_\RR/\langle n_e\rangle\rightarrow Q_e$, getting what we call a \textit{composed evaluation map}:

$$\mathrm{ev}^{(L_e)}:\modpar\rightarrow\prod_{e\in\Gamma^1_\infty} Q_e^\RR.$$

For a combinatorial type $C\in\mathcal{C}_\Delta^\mathrm{top}$, we denote by $\mathrm{ev}^{(L_e)}_C$ the restriction to the orthant of the combinatorial type $C$, which is linear. Due to the assumption on the ranks of the lattices, both spaces have the same dimension.

\begin{defi}
We set the \textit{complex multiplicity} to be
$$m^{\CC,(L_e)}_\Gamma = |\mathrm{det} \ \mathrm{ev}^{(L_e)}_C|,$$
where the domain is endowed with a basis of the lattice $\ZZ^{|\Delta|-3}\times N$, and the codomain a basis of the lattice $\prod_e Q_e$.
\end{defi}

\begin{rem}
One other way to describe the complex multiplicity is as follows: consider the sublattice $\prod_e L_e\subset \prod_e N/\langle n_e\rangle$, which is a full sublattice of codimension $|\Delta|+r-3$ by assumption. Let $\alpha_{(L_e)}\in\Lambda^{|\Delta|+r-3}\left(\prod_e N/\langle n_e\rangle\right)^*$ be its Pl\"ucker vector obtain as follows: take the wedge product of a basis of the dual orthogonal of $\prod_e L_e$. Then the pull-back of $\alpha_{(L_e)}$ by the evaluation map $\mathrm{ev}_C$ restricted to a cone $C$ is a top-dimensional form. Thus, it is a scalar multiple of the determinant form on the lattice of the orthant. The absolute value of the scalar factor is the multiplicity.

It also follows from standard tropical intersection theory (see \cite{allermann2010first}) that the complex multiplicity corresponds to the index of the lattice spanned by $\mathrm{ev}_C(\ZZ^{|\Delta+r-3}\times N)$ and $\prod_e L_e$ inside $\prod_e N/\langle n_e\rangle$, which are of complementary dimension if $\mathrm{ev}$ is injective on the cone relative to the combinatorial type. If not, the complex multiplicity is $0$.\\
\end{rem}

The complex multiplicity can be computed in the hereby described fancy way. This can be seen as a particular case of a result of T.~Mandel and H.~Ruddat \cite{mandel2019tropical}, which we restate in our setting for sake of completeness. To keep a general setting, here and only here, we consider tropical curves $(\Gamma,h)$ that are not necessarily trivalent. Nevertheless, for such a curve and an evaluation map
$$\mathrm{ev}^{(L_e)}:\mathcal{M}_0(\Delta,N_\RR)\rightarrow\prod_{e\in\Gamma^1_\infty} Q_e^\RR,$$
the complex multiplicity is still defined as the determinant of $\mathrm{ev}^{(L_e)}$ restricted to the cone of the combinatorial type, provided that it has the same dimension as the target space. We assume it is the case.\\

For each primitive sublattice $L\subset N$ of corank $l$, there is an orthogonal dual primitive sublattice $L^\perp\subset M$ of rank $l$, and an associated Pl\"ucker vector $\rho\in\Lambda^l M$ defined up to sign. This polyvector is defined as follows: let $m_1,\dots,m_l$ be a basis of $L^\perp$. Then, one has
$$\rho=m_1\wedge\cdots\wedge m_l \in\Lambda^l M.$$
This polyvector does not depend on the chosen basis up to sign. Hence, given a parametrized rational tropical curve $h:\Gamma\rightarrow N_\RR$, one has a polyvector $\rho_e$ associated to each of the unbounded ends $e$ of the curve: the Pl\"ucker vector associated to the full sublattice spanned by $n_e$ and $L_e$.\\

We choose one vertex of $\Gamma$ to be the sink of the curve and orient every edge toward it. We then cut the branches of the tropical curve with the following rule: let $V$ be a vertex different from the sink, with incoming unbounded ends directed by $n_1,\dots,n_s$, and respective polyvectors $\rho_1,\dots,\rho_s$, and a unique outgoing bounded edge, thus directed by $n_1+\cdots +n_s$. The polyvector associated to the outgoing edge of $V$ is
$$\rho=\iota_{n_1+\cdots+n_s}(\rho_1\wedge\cdots\wedge\rho_s).$$
Recall that $\iota_n$, for $n\in N$, denotes the interior product by $n$. Geometrically, the polyvector associated to an edge and to an unbounded end is a multiple of the Pl\"ucker vector associated to the space described by the edge when it moves. This means the following:
\begin{itemize}[label=-]
\item For an unbounded end $e\in\Gamma^1_\infty$ associated to a sublattice $L_e$, the unbounded end can move in the direction $L_e$, thus describe the space with slope spanned by $n_e$ and $L_e$.
\item At a vertex $V$ with incoming edges having respective polyvectors $\rho_1,\dots,\rho_s$. Assume by induction that the incoming edges move in an affine space directed by a vector subspace whose Pl\"ucker vector is $\rho_i$ respectively. Then, the vertex moves in the intersection of all these subspaces. Therefore, the affine subspace has Pl\"ucker vector $\rho_1\wedge\cdots\wedge\rho_s$.
\item Finally, for the outgoing edge of $V$, it moves in an affine space equal to the affine space where $V$ lives, enlarged by the direction of the edge: $n_1+\cdots+n_s$. Hence, the polyvector is obtained by making the interior product with $n_1+\cdots+n_s$.
\end{itemize}

At the sink, let $\rho_1,\dots,\rho_s$ be the polyvectors associated to the incoming adjacent edges. Because of the assumption on dimensions, we have
$$\rho_1\wedge\cdots\wedge\rho_s\in\Lambda^r M\simeq\ZZ.$$
Thus, it is an integer multiple of a generator of $\Lambda^r M$. The absolute value of the proportionality constant, obtained by evaluating on a basis of $N$, is the desired determinant. The proof that it does not depend on the choice of the sink and that it computes the determinant is proved in the appendix.

\begin{theom}[\ref{Theorem calcul complex multiplicity tropical curve}]
The value obtained by the preceding algorithm is equal to the complex multiplicity $m_\Gamma^\CC$.
\end{theom}

\subsection{Refined multiplicities of a trivalent tropical curve}
\label{section Refined multiplicities of a trivalent tropical curve}

To finish this section about tropical curves, we define the refined multiplicities that we use in the following sections. Let $\mathcal{V}_\Delta$ be the set of types of trivalent vertices that occur in some combinatorial type of $\modpar$. A trivalent vertex is determined by the slopes of the three outgoing edges. Using the balancing condition, the type of a vertex is fully determined by the partition of $\Delta$ into three subsets that it induces. Hence, $\mathcal{V}_\Delta$ is finite.

Let $\omega_0$ be a $2$-form such that for any $V\in\mathcal{V}_\Delta$ with $\pi_V\neq 0$, $\omega_0(\pi_V)\neq 0$. This condition means that for any non-flat vertex, the restriction of $\omega_0$ to the plane spanned by the outgoing edges is non-zero. The \textit{refined multiplicity} is defined as follows.

\begin{defi}
The refined multiplicity of a parametrized tropical curve $h:\Gamma\rightarrow N_\RR$ is
$$B^{0}_\Gamma=\prod_{V\in\Gamma^0} (q^{\pi_V}-q^{-\pi_V})\in\ZZ[\Lambda^2 N],$$
where the bivector $\pi_V$ of $V$ is chosen such that $\omega_0(\pi_V)>0$. If the curve contains some flat vertex $W$, then $\pi_W=0$ and then $B^0_\Gamma=0$.
\end{defi}

The refined multiplicity is a Laurent polynomial in several variables. The number of indeterminates is equal to the rank of $\Lambda^2 N$, which is $\bino{r}{2}$. It depends up to sign on the choice of $\omega_0$.\\

Let $K\subset\Lambda^2 N$ be a sublattice. Then we can compose the definition of $B^0_\Gamma$ with the projection $p_K:\ZZ[\Lambda^2 N]\longrightarrow \ZZ[\Lambda^2 N/K]$ to get a slightly less refined multiplicity.

\begin{defi}
For a sublattice $K\subset\Lambda^2 N$, let $B_\Gamma^{K}=p_K(B^0_\Gamma)$, obtained by reducing the exponents modulo $K$.
\end{defi}

Notice that this notation is consistent with the notation for $B_\Gamma^0$, which corresponds to the choice $K=\{0\}$. Although the relation $p_K(B^0_\Gamma)=B^K_\Gamma$ shows that $B^0_\Gamma$ determines the other refined multiplicities, the use of these multiplicities is necessary since the multiplicity $B_\Gamma^0$ might fail to provide an invariant, while some $B_\Gamma^K$ might do so.

\begin{rem}
If $N$ is of rank $2$, then $\Lambda^2 N\simeq \ZZ$, and the choice of a generator of $\Lambda^2 N$ identifies $\ZZ[\Lambda^2 N]$ with $\ZZ[q,q^{-1}]$. Up to a division by $(q-q^{-1})^{|\Delta|-2}$, we recover the definition of the refined multiplicity proposed by Block and G\"ottsche in \cite{block2016refined}.
\end{rem}

\begin{rem}
The refined multiplicity depends up to a sign on the choice of $\omega_0$. In every occurrence of the refined multiplicity, the choice of $\omega_0$ is fixed in a non-ambiguous way so it does not bring up any problem.
\end{rem}

\section{$\omega$-problem and first refinement}

Before turning to more general enumerative problems and providing them with a refined way to count the solutions, we present a family of enumerative problems, called $\omega$-\textit{problems}, where it is slightly easy to show the existence of an associated refined invariant, \textit{i.e.} a refined way to count the solutions of the problem and leading to a result independent of the constraints.\\

In the planar case, meaning that $r=2$, one can consider the classic enumerative problem of finding degree $\Delta$ rational tropical curves passing through $|\Delta|-1$ points. In that case, the complex multiplicities of rational tropical curves split themselves as a product over the vertices of the curve. This fact, which is specific to the two-dimensional case and fails in arbitrary dimension, is used to prove invariance with the refined multiplicity of Block-G\"ottsche \cite{block2016refined}, see \cite{itenberg2013block}. The $\omega$-\textit{problems} also share this property of the complex multiplicity splitting into a product over the vertices of the curve.

\begin{rem}
In the planar case, the $\omega$-problem amounts to find rational tropical curves passing through $|\Delta|-1$ points when these points are located on the toric divisors, meaning that the unbounded ends are contained in fixed lines of the same slope.
\end{rem}

\subsection{Enumerative problem}

Let $\Delta\subset N$ be a tropical degree consisting only of non-zero vectors, and let $\omega\in\Lambda^2 M$ be a $2$-form chosen such that $\omega(n_e,-)\neq 0$ for every $n_e\in \Delta$. Before getting to the enumerative problem, let us define what we mean by \textit{moment}.

\begin{defi}
For a parametrized tropical curve $h:\Gamma\rightarrow N_\RR$ of degree $\Delta$, the \textit{moment} $\mu_\omega(e)$ of an unbounded end $e\in\Gamma^1_\infty$ with respect to $\omega$ is defined as follows: let $n_e$ be the slope of $h$ on $e$ and let $p$ be any point in $h(e)$,
$$\mu_\omega(e)=\omega(n_e,p).$$
\end{defi}

This definition may be extended to the moment of a bounded edge of $\Gamma$ if an orientation is specified. Geometrically, the moment corresponds to the position of the affine space $\mathcal{H}_e$ of slope $H_e=\ker\iota_{n_e}\omega$ that contains the image unbounded end $e$. Using the balancing condition at each vertex of the curve, one easily proves the following proposition.

\begin{prop}
For $h:\Gamma\rightarrow N_\RR$ a parametrized tropical curve, one has
$$\sum_{e\in\Gamma^1_\infty}\mu_\omega(e)=0,$$
meaning that the family $\mu$ of moments of the curve belongs to $\mathfrak{M}=\{\mu:\ \sum_e\mu_e=0\}\subset\RR^{\Delta}$.
\end{prop}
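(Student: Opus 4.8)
The plan is to extend the moment to bounded edges, sum a suitable expression over all vertices, and read off the result using the balancing condition. First I would record the elementary but crucial point that the moment is well defined: if $p, p'$ are two points of $h(e)$, then $p - p'$ is a multiple of the slope $n_e$, and since $\omega$ is alternating $\omega(n_e, n_e) = 0$, so $\omega(n_e, p) = \omega(n_e, p')$. The very same observation lets me define, for each bounded edge $\gamma \in \Gamma_b^1$ equipped with an orientation and outgoing slope $n_\gamma$, the quantity $\mu_\omega(\gamma) = \omega(n_\gamma, p)$ for any $p \in h(\gamma)$, again independently of $p$.

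The key step is to consider the double sum obtained by running over every vertex $V \in \Gamma^0$ and, for each, over the edges adjacent to $V$, evaluating each edge's moment at the common point $p_V := h(V)$. Concretely, if $u_1, \dots, u_k$ are the outgoing slopes at $V$, I would look at $\sum_i \omega(u_i, p_V)$. By bilinearity this equals $\omega\big(\sum_i u_i,\, p_V\big)$, which vanishes by the balancing condition $\sum_i u_i = 0$. Hence the total double sum over all vertices is $0$.

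It then remains to reorganize this double sum edge by edge. Each unbounded end $e$ is adjacent to exactly one vertex $V$, namely its finite endpoint, and since $p_V \in h(e)$ its contribution to the double sum is precisely $\omega(n_e, p_V) = \mu_\omega(e)$. Each bounded edge $\gamma$ joining $V_1$ to $V_2$ contributes at $V_1$ the term $\omega(n_\gamma, p_{V_1})$ and at $V_2$ the term $\omega(-n_\gamma, p_{V_2})$, the outgoing slopes at the two endpoints being opposite; their sum is $\omega(n_\gamma,\, p_{V_1} - p_{V_2})$, and since $p_{V_1} - p_{V_2}$ is parallel to $n_\gamma$, this vanishes exactly as in the well-definedness computation. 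Therefore the bounded edges cancel pairwise and the double sum collapses to $\sum_{e \in \Gamma^1_\infty} \mu_\omega(e)$, which must equal $0$.

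I do not expect any genuine obstacle here: the whole argument rests only on the alternating property of $\omega$ together with the balancing condition. The one point requiring care is the bookkeeping in the reorganization, namely that each bounded edge is counted exactly once from each of its two endpoints with opposite outgoing slopes, while each unbounded end is counted exactly once, so that the cancellation is accounted for without double counting.
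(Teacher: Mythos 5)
Your proof is correct, and it is exactly the argument the paper has in mind (the paper only remarks that the statement ``easily'' follows from the balancing condition at each vertex, without writing out the details): sum $\omega(u_i, h(V))$ over all vertex--edge incidences, kill each vertex's contribution by balancing and bilinearity, and observe that bounded edges cancel in pairs because $h(V_1)-h(V_2)$ is parallel to the edge slope, leaving only the moments of the unbounded ends. The bookkeeping you flag is handled correctly, so nothing further is needed.
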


\begin{rem}
This fact generalizes the planar Menelaus relation noticed by Mikhalkin in \cite{mikhalkin2017quantum}. That is why we also call it \textit{tropical Menelaus relation}.
\end{rem}

We now state the $\omega$-\textit{problem}. Consider the global moment map
$$\mathrm{mom}:(\Gamma,h)\in\modpar\longmapsto \left(\mu_\omega(e)\right)_e\in \mathfrak{M}\subset\RR^{\Delta}.$$
The moment map, which has image in $\mathfrak{M}$ due to the tropical Menelaus relation, is a specific case of composed evaluation map, where one chooses $L_e=H_e$.

\begin{prob}
Let $\mu\in\mathcal{H}$. How many parametrized rational tropical curves of degree $\Delta$ satisfy $\mathrm{mom}(\Gamma,h)=\mu$ ? In other words, for each $e\in\Gamma^1_\infty\backslash\{e_0\}$, fix $\mathcal{H}_e$ an affine hyperplane with slope $H_e$. How many parametrized rational tropical curves of degree $\Delta$ satisfy $h(e)\subset \mathcal{H}_e$ ?
\end{prob}

Unfortunately, the codomain has dimension $|\Delta|-1$, while the domain has dimension $|\Delta|+r-3$. Therefore, unless $r=2$, this is not enough to ensure a finite number of solutions. We then add the following condition: let $e_0\in\Gamma_\infty^1$ be a specific unbounded end and let $\delta\in N/\langle n_{e_0}\rangle$ be a primitive vector such that $\omega(n_{e_0},\delta)\neq 0$ and $D$ be a line with slope $\delta$. We add the condition that $h(e_0)\in D\subset N_\RR/\langle n_{e_0}\rangle$, getting to the $\omega$-\textit{problem} denoted by $\mathcal{P}(\omega,e_0)$.

\begin{prob}
\label{problem moment line}
Let $\mu\in\mathcal{H}$ and $D$ be a line with slope $\delta$ inside $N_\RR/\langle n_{e_0}\rangle$. How many parametrized rational tropical curves of degree $\Delta$ satisfy $\mathrm{mom}(\Gamma,h)=\mu$ and $h(e_0)\in D$ ?
\end{prob}

\begin{rem}
The condition $h(e_0)\in D$ means that the image of the unbounded end $e_0$ is contained in a plane.
\end{rem}

We denote the associated composed evaluation map as follows:
$$\mathrm{ev}^{\omega,e_0}:\modpar\longrightarrow N_\RR/\gen{\delta}\times\prod_{e\neq e_0}\RR,$$
where the $\RR$ terms correspond to the evaluation of the moments of the unbounded ends different from $e_0$.

\begin{rem}
We omit $\delta$ from the notation $\mathcal{P}(\omega,e_0)$, since, as proven in Theorem \ref{theorem invariance moment problem}, the solutions do not depend on $\delta$.
\end{rem}

\begin{rem}
There are other ways to impose further constraints on the curves to get a finite number of tropical curves:
\begin{itemize}
\item The problem depends on the chosen end $e_0$.
\item The constraint could have been split into two constraints on two different unbounded ends.
\item One could have impose constraints on one or several new marked points on the curve.
\end{itemize}
Some of these other possibilities are studied later on in section \ref{section first properties and possible}.
\end{rem}

\subsection{Count of solutions and complex invariance statement}

We now get to the invariance statement for the count of solutions to $\mathcal{P}(\omega,e_0)$ using first the complex multiplicity, meaning that the count does not depend on the choice of $\mu$ nor $D$ as long as it is generic. The invariance of the count using complex multiplicity has already been proven in various ways: by hand showing the local invariance around walls, using tropical intersection theory, or using the invariance in the complex setting and some correspondence theorem. In any case, the statement is a particular case of a more general statement which is proved in the appendix. Yet, as the invariance with the complex multiplicity is a key ingredient in the proof of invariance using refined multiplicity, we give here the main steps of what could be called the \textit{pedestrian} proof.

\begin{prop}
For a generic choice of $\mu$ and $D$, there are finitely many tropical curves solution to $\mathcal{P}(\omega,e_0)$. Moreover, such curves are trivalent.
\end{prop}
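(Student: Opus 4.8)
The plan is to reduce the finiteness and trivalence statement to a dimension count for the composed evaluation map $\mathrm{ev}^{\omega,e_0}$, exploiting the fact that both its domain $\modpar$ and its codomain $N_\RR/\gen{\delta}\times\prod_{e\neq e_0}\RR$ have the same dimension $|\Delta|+r-3$. Indeed, the domain has dimension $|\Delta|+r-3$ by the description of $\modpar$ as a fan with top cones $\RR_+^{|\Delta|-3}\times N_\RR$, while the codomain has dimension $(r-1)+(|\Delta|-1)=|\Delta|+r-2$; the discrepancy is corrected by the tropical Menelaus relation, which forces the image to lie in the codimension-one subspace $\mathfrak{M}=\{\mu:\sum_e\mu_e=0\}$ in the moment coordinates, bringing the effective target dimension down to $|\Delta|+r-3$. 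So the two dimensions agree, and the solution set is the fibre $(\mathrm{ev}^{\omega,e_0})^{-1}(\mu,D)$ of a map between spaces of equal dimension.

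First I would argue that on each top-dimensional cone $C\in\mathcal{C}_\Delta^\mathrm{top}$ the restriction $\mathrm{ev}^{\omega,e_0}_C$ is linear, so its fibres are affine subspaces; by the equality of dimensions, the generic fibre is either empty or a single point, provided the restricted map has maximal rank, i.e. nonzero determinant. This determinant is exactly the complex multiplicity $m^{\CC,\omega,e_0}_\Gamma$ of the combinatorial type $C$ for the choice $L_e=H_e$, $\delta$. A constraint value $(\mu,D)$ is called \emph{generic} precisely when it avoids the (finitely many, lower-dimensional) images of the boundary cones and the images of those top cones on which the determinant vanishes. For such generic data, each contributing cone contributes at most one solution, and there are finitely many cones, so the total count is finite.

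Next I would handle trivalence. A non-trivalent type corresponds to a cone of dimension strictly less than $|\Delta|+r-3$ in $\modpar$, since a vertex of valence $v$ costs $v-3$ in the overvalence and lowers the number of bounded-edge length parameters accordingly. The image under the piecewise-linear evaluation map of such a lower-dimensional cone is a subset of dimension at most $|\Delta|+r-4$ in the codomain, hence measure zero; a generic $(\mu,D)$ avoids all these images. Therefore no non-trivalent curve appears among the solutions for generic constraints, and every solution is trivalent. One also discards, by the same genericity, curves with a flat vertex or a contracted bounded edge, where the evaluation map degenerates and the determinant drops.

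The main obstacle is the bookkeeping of the dimension count, specifically verifying carefully that the target dimension is genuinely $|\Delta|+r-3$ after accounting for the Menelaus relation, and confirming that the locus of bad constraint values is a finite union of proper affine subspaces so that its complement is open and dense. This is where the tropical Menelaus relation (the proposition above) is essential: without it the map would be from a smaller- to a larger-dimensional space and the generic fibre would be empty, giving no solutions at all. The remaining verifications—linearity on cones, the interpretation of the determinant as the complex multiplicity, and the measure-zero argument for lower-dimensional cones—are routine once the dimensions are matched.
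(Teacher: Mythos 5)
Your argument is correct and follows essentially the same route as the paper: decompose $\modpar$ into its finitely many cones, use that $\mathrm{ev}^{\omega,e_0}$ is linear on each, discard by genericity the (nowhere dense) images of the lower-dimensional cones and of the top-dimensional cones where the map fails to be injective, and conclude that each remaining cone contributes at most one, necessarily trivalent, solution. The only cosmetic difference is your dimension bookkeeping for the codomain: since the moment of $e_0$ is omitted from the product and the first factor is really $\left(N_\RR/\gen{n_{e_0}}\right)/\gen{\delta}$ of dimension $r-2$, the target already has dimension $|\Delta|+r-3$ without invoking the Menelaus relation, though this does not affect the argument.
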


\begin{proof}
The set of top-dimensional combinatorial types $\mathcal{C}_\Delta^\mathrm{top}$ decomposes between the types on which the composed evaluation map $\mathrm{ev}^{\omega,e_0}$ is non-injective, and the ones where it is bijective. The image of the cones where the composed evaluation map is non-injective is nowhere dense inside the codomain since $\mathrm{ev}^{\omega,e_0}$ is linear and non-surjective either. The same applies for the image of the cones which are not top-dimensional. Therefore, if $(\mu,D)$ is chosen outside the image of these cones, the only cones that may provide a solution are top-dimensional ones where the evaluation map is injective. Such a solution, if it exists, is unique. As there are finitely many combinatorial types, the conclusion follows.
\end{proof}

\begin{rem}
This fact can also be proven using tropical intersection theory: for a generic choice of constraints, the spaces $\mathrm{ev}(\modpar)$ and $D\times\prod_{e\neq e_0} \mathcal{H}_e$ intersect transversally inside $\prod_e N_\RR/\gen{n_e}$ and intersection points belong to the interior of top-dimensional faces. The conclusion follows along with the invariance of the count using complex multiplicity.
\end{rem}

We now compute the complex multiplicities of the solutions.

\begin{lem}
For each trivalent solution $h:\Gamma\rightarrow N_\RR$ to the $\omega$-problem, the complex multiplicity takes the following form:
$$m_\Gamma^{\CC,\omega,e_0}=\left| \omega(n_{e_0},\delta)\prod_{V\in\Gamma^0} \omega(\pi_V)\right|.$$
\end{lem}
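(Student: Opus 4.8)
The plan is to compute the complex multiplicity using the algorithmic description furnished by Theorem~\ref{Theorem calcul complex multiplicity tropical curve}, specialized to the moment problem where $L_e = H_e = \ker\iota_{n_e}\omega$ for every end $e\neq e_0$, and $L_{e_0}$ is the corank-$2$ lattice cut out by both $\omega(n_{e_0},-)$ and the dual covector defining $D$. The key observation is that for each end $e\neq e_0$, the sublattice spanned by $n_e$ and $H_e$ is exactly $\ker\iota_{n_e}\omega$ enlarged by $n_e$, whose orthogonal $L_e^\perp\subset M$ is the rank-$1$ lattice generated by $\iota_{n_e}\omega$. Hence the Pl\"ucker polyvector attached to such an end is simply $\rho_e = \iota_{n_e}\omega \in M$, up to sign. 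For the distinguished end $e_0$, the polyvector is a bivector $\rho_{e_0}\in\Lambda^2 M$, the wedge of $\iota_{n_{e_0}}\omega$ with the covector defining $D$.

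First I would carry out the cutting algorithm and show inductively that the interior product $\iota_n$ applied to wedges of the covectors $\iota_{n_e}\omega$ produces, at each vertex, a polyvector that can be read off from $\omega$ alone. The central computation is the following identity at a trivalent vertex $V$ with outgoing slopes $a_V,b_V,c_V$ (so $a_V+b_V+c_V=0$): if two incoming edges carry covectors $\iota_a\omega$ and $\iota_b\omega$, then the outgoing edge directed by $a+b = -c$ carries
$$\iota_{-c}\bigl(\iota_a\omega \wedge \iota_b\omega\bigr) = \omega(a,b)\,\iota_c\omega$$
up to sign, using the contraction identity $\iota_c(\alpha\wedge\beta) = (\iota_c\alpha)\beta - \alpha(\iota_c\beta)$ together with $\langle \iota_a\omega, c\rangle = \omega(a,c)$ and the balancing relation. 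The scalar factor $\omega(a_V,b_V) = \omega(\pi_V)$ that pops out at each vertex is precisely the contribution of $V$ to the product. Propagating this through the tree, the polyvector reaching any edge is a scalar multiple of $\iota_{n}\omega$ (where $n$ is the edge's slope), and the accumulated scalar is $\prod_V \omega(\pi_V)$ over the vertices already absorbed.

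Next I would treat the sink and the special end $e_0$ separately, since $e_0$ carries the extra line constraint $D$ and hence a bivector rather than a covector. The final wedge at the sink pairs the bivector coming down the $e_0$-branch with the covector $\iota_{n}\omega$ coming down the complementary branch; this produces the extra factor $\omega(n_{e_0},\delta)$, matching the stated formula. The main obstacle I anticipate is bookkeeping the signs consistently: the polyvectors are only defined up to sign, the interior-product identity carries alternating signs, and the orientation conventions at the sink must be tracked. Since the statement asks only for the absolute value $|\omega(n_{e_0},\delta)\prod_V\omega(\pi_V)|$, I can afford to be cavalier about signs and argue that every intermediate polyvector is a nonzero rational multiple of the expected primitive vector with the predicted scalar, finishing by taking absolute values. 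A secondary check is that the primitivity normalizations in the Pl\"ucker vectors $\rho_e$ do not introduce extra lattice indices; since the $L_e = H_e$ are taken full and the covectors $\iota_{n_e}\omega$ are the natural generators, these indices are trivial, so no correction factor survives beyond the product $\omega(n_{e_0},\delta)\prod_V\omega(\pi_V)$.
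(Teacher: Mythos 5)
Your overall strategy is the paper's: run the cutting algorithm of Theorem~\ref{Theorem calcul complex multiplicity tropical curve}, observe that each end $e\neq e_0$ carries the covector $\iota_{n_e}\omega$, and use the contraction identity $\iota_{a+b}(\iota_a\omega\wedge\iota_b\omega)=\omega(a,b)\,\iota_{a+b}\omega$ to absorb one factor $\omega(\pi_V)$ per vertex. That part is correct and is exactly how the paper proceeds.

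The gap is in your treatment of the distinguished end $e_0$. The constraint there is $h(e_0)\in D$ with $D$ a \emph{line} of slope $\delta$ in $N_\RR/\gen{n_{e_0}}$, i.e.\ $L_{e_0}=\gen{\delta}$ has corank $r-2$, not corank $2$; the moment of $e_0$ is \emph{not} a separate constraint (it is forced by the tropical Menelaus relation and does not appear in $\mathrm{ev}^{\omega,e_0}$). Accordingly the Pl\"ucker polyvector attached to $e_0$ is the degree-$(r-2)$ form $\iota_\delta\iota_{n_{e_0}}\Omega\in\Lambda^{r-2}M$, with $\Omega$ a generator of $\Lambda^r M$ --- it does not involve $\omega$ at all. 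Your bivector $\iota_{n_{e_0}}\omega\wedge m_D\in\Lambda^2 M$ is wrong on two counts: its degree makes the final wedge land in $\Lambda^{|\Delta|+1}M$ rather than $\Lambda^r M$ (these agree only when $r=4$, and for $r=3$ the wedge is identically zero); and $\iota_{n_{e_0}}\omega$ cannot be a defining covector of the lattice $\gen{n_{e_0},\delta}$, precisely because $\iota_{n_{e_0}}\omega(\delta)=\omega(n_{e_0},\delta)\neq 0$ by hypothesis. As a result the step where you claim the sink ``produces the extra factor $\omega(n_{e_0},\delta)$'' is not established; with your polyvector it would not. The missing computation, with the correct polyvector and the sink taken adjacent to $e_0$ (so that \emph{two} branches, carrying $\iota_a\omega$ and $\iota_b\omega$ with $a+b=-n_{e_0}$, meet the $e_0$-constraint), is
\begin{equation*}
\iota_a\omega\wedge\iota_b\omega\wedge\iota_\delta\iota_{a+b}\Omega=\pm\,\omega(a+b,\delta)\,\omega(a,b)\,\Omega,
\end{equation*}
which supplies both the factor $\omega(n_{e_0},\delta)$ and the factor $\omega(\pi_V)$ for the sink vertex itself (a vertex your propagation never absorbs). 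Once $\rho_{e_0}$ is corrected and this identity is verified, the rest of your argument goes through.
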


\begin{proof}
We use the algorithm to compute the complex multiplicity by choosing as sink the vertex adjacent to the unbounded end $e_0$. Then, the recursion uses the following identity:
\begin{align*}
\iota_{a+b}(\iota_a\omega\wedge\iota_b\omega) &=(\iota_{a+b}\iota_a\omega)\iota_b\omega - \iota_a\omega(\iota_{a+b}\iota_b\omega) \\
&= \omega(a,a+b)\iota_b\omega - \omega(b,a+b)\iota_a\omega \\
&= \omega(a,b)\iota_{a+b}\omega.\\
\end{align*}
Hence, the computation reduces to the case there is just one vertex. Noticing that the constraint on $e_0$ is associated to the polyvector $\iota_\delta\iota_{n_{e_0}}\Omega$, where $\Omega$ is a generator of $\Lambda^r M$, we have
\begin{align*}
\iota_a\omega\wedge\iota_b\omega\wedge\iota_\delta\iota_{a+b}\Omega & = \pm \omega(a+b,\delta)\omega(a,b)\Omega.
\end{align*}
\end{proof}

To conclude, for a generic choice $(\mu,D)$, let
$$N^\CC_\Delta(\mu,D)=\sum_{\substack{\mathrm{mom}(\Gamma,h)=\mu \\ h(e_0)\in D}} m^{\CC,\omega,e_0}_\Gamma,$$
be the count of solutions to $\mathcal{P}(\omega,e_0)$ using the complex multiplicity. This count is well-defined since the solutions $\Gamma$ are trivalent. Then one has the following invariance statement.

\begin{prop}
The value of the count $N^\CC_\Delta(\mu,D)$ does not depend on the choice of $\mu$ nor the choice of $D$ as long as these choices are generic. It only depends on the choice of $\omega$ and the slope $\delta$ of $D$.
\end{prop}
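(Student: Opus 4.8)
The plan is to prove invariance by the standard \emph{wall-crossing} argument: show that $N^\CC_\Delta(\mu,D)$ is locally constant on the generic locus and is unchanged across each separating wall. First I would record that, by the preceding proposition, for generic $(\mu,D)$ every solution is trivalent and is the unique preimage of $(\mu,D)$ lying in a top-dimensional cone $C$ on which $\mathrm{ev}^{\omega,e_0}_C$ is bijective; thus $N^\CC_\Delta(\mu,D)=\sum_C m^{\CC,\omega,e_0}_C$, the sum over those $C$ whose image cone contains $(\mu,D)$. The non-generic locus in the codomain is the union of the images of the facets of $\modpar$ together with the images of lower-dimensional cones and of the top cones on which $\mathrm{ev}^{\omega,e_0}$ drops rank. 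Away from the images of facets this locus has codimension at least $2$, so the \emph{walls} are exactly the codimension-one images of facets, and $N^\CC_\Delta$ is constant on each chamber of their complement. Joining two generic constraints by a path that meets the walls one at a time, transversally, at generic wall points, I reduce the statement to invariance across a single wall.

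For the local model, fix a generic point $p_0$ of a wall. It is the image of a point of a unique facet $W$, whose combinatorial type carries a single quadrivalent vertex $V_0$, all other vertices being trivalent. By the fan structure of $\modpar$, the facet $W$ bounds exactly three top-dimensional cones $C_1,C_2,C_3$, corresponding to the three smoothings of $V_0$; near $p_0$ only these three contribute solutions. Write $H=\mathrm{ev}^{\omega,e_0}(TW)$ for the common image hyperplane and let $\xi_i$ denote the primitive generator of the extra ray of $C_i$, that is, the length of the bounded edge created by the $i$-th smoothing. Decomposing $\mathrm{ev}^{\omega,e_0}_{C_i}(\xi_i)=(h_i,\bar w_i)$ along $H$ and a transverse line, cofactor expansion of the determinant gives $m^{\CC,\omega,e_0}_{C_i}=|c|\,|\bar w_i|$ with a common factor $c=\det(\mathrm{ev}^{\omega,e_0}|_{TW})$. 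A solution in $C_i$ exists precisely on the side $\mathrm{sign}(\bar w_i)$ of the wall (where the new edge length stays nonnegative), so the jump of the count across $p_0$ equals $\sum_i \mathrm{sign}(\bar w_i)\,m^{\CC,\omega,e_0}_{C_i}=|c|\sum_i \bar w_i$. Hence invariance reduces to the single balancing relation $\sum_i \mathrm{ev}^{\omega,e_0}_{C_i}(\xi_i)\in H$, equivalently $\sum_i \bar w_i=0$.

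The crux, and the step I expect to be the main obstacle, is this balancing. Here I would argue locally at $V_0$, whose four outgoing slopes $a,b,c,d$ satisfy $a+b+c+d=0$ and whose three smoothings pair them as $\{ab|cd\}$, $\{ac|bd\}$, $\{ad|bc\}$ with new edges of slopes $a+b$, $a+c$, $a+d$. Lengthening the new edge of $\{ab|cd\}$ rigidly translates the two subtrees carried by $c$ and $d$ in the direction $a+b$; by the balancing condition the unbounded ends of those subtrees have slopes summing to $c+d=-(a+b)$, so the total change of moments is $\omega\big(-(a+b),a+b\big)=0$, confirming $\mathrm{ev}^{\omega,e_0}_{C_1}(\xi_1)\in\mathfrak{M}$ and, similarly, that the $e_0$-component behaves consistently. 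The substantive claim is that the \emph{sum} of the three vectors $\mathrm{ev}^{\omega,e_0}_{C_i}(\xi_i)$ is the image of an explicit tangent vector of $W$, namely a combination of the translations of $N_\RR$ and the lengths of the four edges incident to $V_0$, all of which lie in $TW$. This is exactly the balancing of the push-forward cycle $\mathrm{ev}^{\omega,e_0}_\ast[\modpar]$ along the wall; I would verify it by a direct computation controlled by the explicit formula $m^{\CC,\omega,e_0}_\Gamma=|\omega(n_{e_0},\delta)\prod_V\omega(\pi_V)|$ together with the identity $\iota_{a+b}(\iota_a\omega\wedge\iota_b\omega)=\omega(a,b)\,\iota_{a+b}\omega$ from the preceding lemma, which dictates how the vertex factors $\omega(\pi_V)$ of the three smoothings recombine.

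Granting the wall relation, $N^\CC_\Delta$ is unchanged across every wall and constant on each chamber, hence constant on the whole generic locus; this is the asserted independence of $\mu$ and of $D$. The data $\omega$ and the slope $\delta$ may of course still enter, since $\omega$ defines the moment map and $\delta$ enters the constraint on $e_0$. As an alternative to this pedestrian argument, and following the remark above, one may instead establish that $\mathrm{ev}^{\omega,e_0}_\ast[\modpar]$ is a balanced fan cycle (push-forward preserves balancing) and read off the invariance as the translation-invariance of a stable intersection number with the fixed constraint cycle $D\times\prod_{e\neq e_0}\mathcal{H}_e$.
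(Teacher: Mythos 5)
Your proposal is correct and follows essentially the same route as the paper: the paper's one-line proof defers to the appendix, where exactly this wall-crossing argument is carried out (finiteness and trivalency for generic constraints, reduction to facets with one quadrivalent vertex, the vanishing of the sum of the three determinants via the balancing condition at that vertex, and Cramer's rule to match the sign of each determinant with the side of the wall on which that smoothing contributes). Your phrasing of the key identity as balancing of the push-forward cycle $\mathrm{ev}^{\omega,e_0}_*[\modpar]$ is just a coordinate-free repackaging of the column manipulations the appendix performs on the evaluation matrices, and the intersection-theoretic shortcut you mention at the end is the other alternative the paper itself invokes.
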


The proof follows from tropical intersection theory, or from the computations done in the appendix.

\subsection{Refined invariance statement}

We conclude this section by showing an invariance statement for the same enumerative problem \ref{problem moment line}, using this time the refined multiplicities $B^{K_\omega}$ introduced in section \ref{section Refined multiplicities of a trivalent tropical curve}.\\

Let $K_\omega=\langle\pi_V:\omega(\pi_V)=0\rangle_{V\in\mathcal{V}_\Delta}\subset\Lambda^2 N$ be the space spanned by the vertices spanning a plane where $\omega$ restricts to $0$. If $\omega$ is chosen generically regarding $\Delta$, then $K_\omega=0$. Otherwise, the $2$-form $\omega$ might take value $0$ on some vertex $V$ occurring in a rational tropical curve of degree $\Delta$. We use the refined multiplicity $B_\Gamma^{K_\omega}$ of a tropical curve, which is
$$B^{K_\omega}_\Gamma=\prod_{V\in\Gamma^0}(q^{\pi_V}-q^{-\pi_V})\in \ZZ[\Lambda^2 N/K_\omega].$$
The $2$-vector $\pi_V$ is chosen so that $\omega(\pi_V)>0$. If $\omega(\pi_V)=0$, then $\pi_V\in K_\omega$ and the multiplicity is equal to $0$ since the exponents are taken modulo $K_\omega$. Then, for a generic choice $(\mu,D)$, let
$$\mathcal{B}_\Delta(\mu,D)=\sum_{\substack{\mathrm{mom}(\Gamma,h)=\mu \\ h(e_0)\in D}} B^{K_\omega}_\Gamma\in \ZZ[\Lambda^2 N/K_\omega].$$

\begin{theo}
\label{theorem invariance moment problem}
The refined count $\mathcal{B}_\Delta(\mu,D)$ does not depend on the choice of $\mu$ and $D$ as long as it is generic. Neither does it depend on the slope $\delta$ of $D$. It only depends on the choice of $\omega$ and the choice of the unbounded end $e_0$ with the additional constraint. It is denoted by $\mathcal{B}_\Delta^{\omega,e_0}$.
\end{theo}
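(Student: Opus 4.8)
The plan is to establish invariance by the standard \emph{wall-crossing} technique: the space of constraints $\mathfrak{M}\times\left(N_\RR/\gen{n_{e_0}}\right)$ is stratified by the image of the non-generic loci, and crossing a wall corresponds to passing through a codimension-one face of the moduli space $\modpar$. Since the refined count $\mathcal{B}_\Delta(\mu,D)$ is locally constant on each chamber (the solutions vary continuously and no solution is created or destroyed away from walls), it suffices to prove that the count is unchanged when $(\mu,D)$ crosses a single wall. First I would identify the walls: these are the images under $\mathrm{ev}^{\omega,e_0}$ of the codimension-one cones of $\modpar$, that is, the facets, which by the earlier discussion correspond to combinatorial types with exactly one quadrivalent vertex, all others trivalent. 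Each such facet is adjacent to exactly three top-dimensional cones, obtained by the three smoothings of the quadrivalent vertex into a pair of trivalent vertices.

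The heart of the argument is therefore a \emph{local} computation at a fixed wall $W$. Let the quadrivalent vertex have four outgoing slopes, grouped in three ways into two trivalent vertices; label the three resulting trivalent combinatorial types $C_1,C_2,C_3$. On one side of the wall some subset of the $C_i$ contribute solutions, on the other side the complementary subset; the wall-crossing formula asserts that the alternating/signed contributions balance. Concretely, I would show that for each side, the solutions coming from the smoothings $C_i$ that are geometrically realizable carry refined multiplicities $B_{C_i}^{K_\omega}$ whose sum is the same on both sides. Because $B_\Gamma^{K_\omega}$ is a product over vertices and all vertices away from the smoothed one are common to the three types, the identity reduces to a statement about the \emph{local} factor at the quadrivalent vertex: if the four slopes are $u_1,u_2,u_3,u_4$ with $\sum u_i=0$, then the three smoothings produce interior edges of slope $u_1+u_2$, $u_1+u_3$, $u_1+u_4$ respectively, and the product of the two new vertex factors must satisfy a relation modulo $K_\omega$.

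The key algebraic identity to verify is that, writing $\pi_{ij}=u_i\wedge u_j$ and choosing the smoothing $C_1$ with interior edge $u_1+u_2$, the product of its two vertex bivectors equals $u_1\wedge u_2 + u_3\wedge u_4$ in $\Lambda^2 N$ (and cyclically for the other smoothings), so that the three smoothings contribute bivector-sums that differ only by elements expressible through the others. Combined with the \emph{complex} local invariance already established—namely that $\sum_i \pm\,\omega(\pi_{V}^{(i)})\,\omega(\pi_{V'}^{(i)})$ vanishes appropriately across the wall, which is exactly the identity $\iota_{a+b}(\iota_a\omega\wedge\iota_b\omega)=\omega(a,b)\iota_{a+b}\omega$ used in the proof of the complex multiplicity lemma—one deduces that the three terms $q^{\pi_1}q^{\pi_1'}-q^{-\pi_1}q^{-\pi_1'}$ (and so on) cancel in pairs when reduced modulo $K_\omega$. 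The independence from the slope $\delta$ of $D$ then follows by the same wall-crossing applied to varying $D$, since the factor $\omega(n_{e_0},\delta)$ only affects the complex count and not the refined balancing.

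The step I expect to be the main obstacle is the precise bookkeeping of \emph{signs} in the wall-crossing: across a facet, the orientations $\partial_W\ori(C_i)$ induced on $W$ by the three adjacent cones must be compared, and one must check that the geometric realizability of each smoothing (which side of the wall it sits on) matches the sign so that the refined factors genuinely cancel rather than add. This is where the interplay between $K_\omega$ and the vanishing locus of $\omega$ is delicate: a smoothing whose new bivector lies in $K_\omega$ contributes a factor that is $0$ after reduction, and one must verify that these degenerate contributions are consistently accounted for on both sides of the wall. Once the local three-term identity in $\ZZ[\Lambda^2 N/K_\omega]$ is pinned down with correct signs, summing over all walls and all facets yields the global invariance and completes the proof.
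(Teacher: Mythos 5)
Your overall strategy is the same as the paper's: reduce to local invariance at the walls given by quadrivalent vertices, use the already-established complex invariance to determine how the three smoothings distribute on the two sides of the wall, and then match this with a $q$-deformed identity among the local vertex factors. However, the proposal stops exactly where the proof actually lives. The two steps you defer — the sign bookkeeping and the local identity — are the entire content of the argument, and the one concrete claim you make about the identity is not correct as stated. The three products do not ``cancel in pairs modulo $K_\omega$'': the relevant relation is an exact three-term identity in $\ZZ[\Lambda^2 N]$ (no reduction needed when all complex multiplicities are nonzero) in which \emph{one} product equals the \emph{sum} of the other two, mirroring the complex relation $\omega(a_1,a_2)\omega(a_1+a_2,a_3)+\omega(a_1,a_3)\omega(a_2,a_1+a_3)+\omega(a_2,a_3)\omega(a_2+a_3,a_1)=0$ whose term signs dictate which smoothing sits alone on its side of the wall. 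Concretely, after normalizing so that $\omega(a_i,a_{i+1})>0$ and $\omega(a_2,a_3)>\omega(a_1,a_2)$, one finds $12//34$ and $14//23$ always lie on opposite sides, and the side of $13//24$ is governed by the sign of $\omega(a_1,a_3)$; the required identity is then, e.g.,
\begin{align*}
 & (q^{a_2\wedge a_3}-q^{a_3\wedge a_2})(q^{a_1\wedge (a_2+a_3)}-q^{(a_2+a_3)\wedge a_1}) \\
 = & (q^{a_1\wedge a_2}-q^{a_2\wedge a_1})(q^{(a_1+a_2)\wedge a_3}-q^{a_3\wedge (a_1+a_2)})
+  (q^{a_1\wedge a_3}-q^{a_3\wedge a_1})(q^{a_2\wedge (a_1+a_3)}-q^{(a_1+a_3)\wedge a_2}),
\end{align*}
with the analogous rearrangement in the case $\omega(a_1,a_3)<0$. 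Without exhibiting this case distinction and verifying the identity (a direct expansion using $a_1\wedge a_2+(a_1+a_2)\wedge a_3 = a_2\wedge a_3 + a_1\wedge(a_2+a_3)$, etc.), the proof is not complete.

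A second, smaller gap: you invoke reduction modulo $K_\omega$ as the mechanism of cancellation, but its actual role is different. The identity above holds in $\ZZ[\Lambda^2 N]$; the quotient by $K_\omega$ is needed only to kill the refined factor of a smoothing whose complex multiplicity vanishes (i.e.\ $\omega(\pi_W)=0$), since such a type contributes no solution on either side and its unreduced refined multiplicity $B^0$ need not vanish. You should state this explicitly rather than fold it into a generic ``delicate interplay.'' Your argument for independence of $\delta$ is essentially the paper's and is fine.
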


\begin{proof}
We already know that the count of solutions with complex multiplicity $m^{\CC,\omega,e_0}$ leads to an invariant. This means that the repartition of the solutions around a wall matches the invariance of the count with complex multiplicities. Recall that here, a wall is a tropical curve with a unique quadrivalent vertex. Therefore, we need to check for each wall that the count with refined multiplicities is also invariant.

We check the local invariance around the wall corresponding to a quadrivalent vertex of the tropical curve. Let $a_1,a_2,a_3,a_4$ be the slopes of the outer edges, with index taken in $\ZZ/4\ZZ$. Up to a relabeling, we assume that $\omega(a_i,a_{i+1})>0$ for every $i$. If some value is equal to $0$, the proof remains unchanged. Moreover, we assume that $\omega(a_2,a_3)>\omega(a_1,a_2)$. The invariance of the complex count amounts to the following relation:
$$\begin{array}{ccccccc}
	\omega(a_1,a_2)\omega(a_1+a_2,a_3) & + & \omega(a_1,a_3)\omega(a_2,a_1+a_3) & + & \omega(a_2,a_3)\omega(a_2+a_3,a_1) & = &0, \\
\text{ for }12//34 & & \text{ for }13//24 & & \text{ for }14//23 & & \\
	\end{array}$$
	and the repartition of combinatorial types around the wall is given by the sign of each term. It means up to sign that one is positive and is on one side of the wall, and the two other ones are negative, on the other side of the wall. Hence, we just need to study the signs of each term to know which curve is on which side. We know that $\omega(a_1,a_2)$ and $\omega(a_1+a_2,a_3)=\omega(a_3,a_4)$ are positive. Therefore, their product, which is the term of $12//34$, is also positive. We know that $\omega(a_2,a_3)$ is positive, but $\omega(a_2+a_3,a_1)=-\omega(a_4,a_1)$ is negative. Therefore, their product is negative and $14//23$ is on the other side of the wall. It means that the combinatorial types $12//34$ and $14//23$ are on opposite sides of the wall. We need to determine on which side the type $13//24$ is, and that is given by the sign of the middle term. As by assumption $\omega(a_2,a_1+a_3)=\omega(a_2,a_3)-\omega(a_1,a_2)>0$, it is determined by the sign of $\omega(a_1,a_3)$.\\
	\begin{itemize}
	\item If $\omega(a_1,a_3)>0$, then $12//34$ and $13//24$ are on the same side, and the invariance for refined multiplicities is dealt with the identity
	\begin{align*}
 & (q^{a_2\wedge a_3}-q^{a_3\wedge a_2})(q^{a_1\wedge (a_2+a_3)}-q^{(a_2+a_3)\wedge a_1}) \\
 = & (q^{a_1\wedge a_2}-q^{a_2\wedge a_1})(q^{(a_1+a_2)\wedge a_3}-q^{a_3\wedge (a_1+a_2)}) \\
+ & (q^{a_1\wedge a_3}-q^{a_3\wedge a_1})(q^{a_2\wedge (a_1+a_3)}-q^{(a_1+a_3)\wedge a_2}),\\
\end{align*}
	\item and if $\omega(a_1,a_3)<0$, then $14//23$ and $13//24$ are on the same side and then the invariance for refined multiplicities is true since

\begin{align*}
 & (q^{a_2\wedge a_3}-q^{a_3\wedge a_2})(q^{a_1\wedge (a_2+a_3)}-q^{(a_2+a_3)\wedge a_1}) \\
+ & (q^{a_3\wedge a_1}-q^{a_1\wedge a_3})(q^{a_2\wedge (a_1+a_3)}-q^{(a_1+a_3)\wedge a_2})\\
 = & (q^{a_1\wedge a_2}-q^{a_2\wedge a_1})(q^{(a_1+a_2)\wedge a_3}-q^{a_3\wedge (a_1+a_2)}). \\
\end{align*}

	\end{itemize}
	
	This closes the proof if each complex multiplicity is non-zero. If the multiplicity of some side of the wall is $0$, it does not provide any solution to the enumerative problem for generic values, and one needs to cancel its refined multiplicity, which might not be $0$ if one chooses the refined multiplicity $B^0_\Gamma$. If some combinatorial type has complex multiplicity $0$, it means that for some $W$ among the two displayed vertices, $\omega(\pi_W)=0$. The same relations between refined multiplicity lead to the invariance provided that one mods out the exponent by $\pi_W$, which sends the refined multiplicity to $0$. One just obtains that the curves on the two remaining sides of the wall have the same multiplicity.
	
	The result also does not depend on $\delta$ since the refined multiplicity does not depend on $\delta$, and the set of solutions for a given choice of $(\mu,D)$ only depends on $\mu$ and the intersection point of $D$ with the hyperplane $\mathcal{H}_{e_0}$.
\end{proof}

\begin{rem}
We have proven a refined invariance statement for every $\omega$ such that for every $n\in\Delta$, $\iota_n\omega\neq 0$. For every $\omega$ not taking the value $0$ on $\mathcal{V}_\Delta$, we have an refined invariant in $\ZZ[\Lambda^2 N]$. If for some $V$ one has $\omega(\pi_V)=0$, the space of the exponents where the refined invariant takes value is reduced by $\pi_V$.
\end{rem}

\section{Refined count for rational tropical curves}

We now use the refined invariance proven in the setting of the $\omega$-problem to prove a refined invariance for a larger class of enumerative problems, where the constraints we ask a tropical curve to satisfy do not necessarily come from the choice of a $2$-form $\omega$.

\subsection{New enumerative problems}

Let $\Delta\subset N$ be a tropical degree without any zero vector. For each $n_e\in\Delta$, let $L_e$ be a full sublattice of $N/\gen{n_e}$, and let $\mathcal{L}_e$ be an affine subspace of $N_\RR/\langle n_e\rangle$ with slope $L_e$. A \textit{full} sublattice means that the quotient is without torsion. Assume that $\sum_e \mathrm{cork}L_e=|\Delta|+r-3$.

\begin{prob}
How many rational tropical curves $h:\Gamma\rightarrow N_\RR$ of degree $\Delta$ satisfy for each unbounded end $e\in\Gamma_\infty^1$, $h(e)\in\mathcal{L}_e$ ?
\end{prob}

In other words, using previous notations, solving this enumerative problem means finding the preimages of a generic point for the following composed evaluation map
$$\mathrm{ev}^{(L_e)}:\modpar\rightarrow\prod_e Q_e.$$
The enumerative problem is referred as $\mathcal{P}(L_e)$, and is related to an analogous complex enumerative problem though the use of a correspondence theorem. The count with the complex multiplicity $m^{\CC,(L_e)}_\Gamma$ is already known to lead to an invariant. The result is recalled here is proven in the appendix.

\begin{theom}[\ref{proposition invariance tropical count with complex multiplicities}]
For a generic choice of $(\mathcal{L}_e)$, the count $N_\Delta^\CC(\mathcal{L}_e)=\sum_{\Gamma:h(e)\in\mathcal{L}_e}m_\Gamma^{\CC,(L_e)}$ does not depend on the choice of $(\mathcal{L}_e)$. It only depends on the choice of $\Delta$ and $(L_e)$.
\end{theom}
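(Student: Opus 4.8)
The plan is to interpret the count $N_\Delta^\CC(\mathcal{L}_e)$ as the degree of the composed evaluation morphism $\mathrm{ev}^{(L_e)}:\modpar \to \prod_e Q_e^\RR$ and to show this degree is independent of the generic target. First I would establish finiteness and trivalence of the solutions exactly as for the $\omega$-problem: since $\mathrm{ev}^{(L_e)}$ is linear on each cone and domain and codomain both have dimension $|\Delta|+r-3$, the cones on which the restriction fails to be injective, together with all cones of lower dimension, have nowhere-dense image. Choosing $(\mathcal{L}_e)$ generically, i.e. choosing a generic point $x\in\prod_e Q_e^\RR$, the preimages all lie in the interiors of top-dimensional cones $C\in\mathcal{C}_\Delta^\mathrm{top}$ on which $\mathrm{ev}^{(L_e)}_C$ is bijective, and each such cone contributes at most one solution with weight $m_\Gamma^{\CC,(L_e)}=|\det\mathrm{ev}^{(L_e)}_C|$.

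Next I would connect two generic targets $x_0,x_1$ by a generic path and observe that $N_\Delta^\CC$ can only change when the path crosses the image of a codimension-one cone, i.e. a facet $W$ of $\modpar$. By the structure of the moduli space recalled above, a facet corresponds to a combinatorial type with a single quadrivalent vertex, and exactly three top-dimensional cones $C_1,C_2,C_3$, the three smoothings of that vertex into a pair of trivalent vertices, are adjacent to $W$. The statement thus reduces to a \emph{local invariance} around each facet: the total weight $\sum_i|\det\mathrm{ev}^{(L_e)}_{C_i}|$ contributed on one side of the wall $\mathrm{ev}^{(L_e)}(W)$ equals the total weight contributed on the other side.

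To verify this local balancing I would compute the three determinants using the Plücker-vector algorithm of Theorem \ref{Theorem calcul complex multiplicity tropical curve}. Let $a_1,a_2,a_3,a_4$ be the outgoing slopes at the quadrivalent vertex; the three smoothings pair these into $12//34$, $13//24$, $14//23$, and each determinant is produced by the same interior-product recursion on the polyvectors $\rho_e$, differing only near the vertex being smoothed. The side of the wall occupied by each $C_i$ is encoded in the signed determinant $\det\mathrm{ev}^{(L_e)}_{C_i}$ taken before the absolute value, and the claim becomes the vanishing of a signed sum $\sum_i\varepsilon_i\det\mathrm{ev}^{(L_e)}_{C_i}=0$ once the $C_i$ are coherently oriented; equivalently, that the pushforward $\mathrm{ev}^{(L_e)}_*\modpar$ is balanced along the image of $W$ in the sense of tropical intersection theory \cite{allermann2010first}. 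This is the three-term relation generalizing the identity $\omega(a_1,a_2)\omega(a_1+a_2,a_3)+\omega(a_1,a_3)\omega(a_2,a_1+a_3)+\omega(a_2,a_3)\omega(a_2+a_3,a_1)=0$ used for the $\omega$-problem, now carried out for arbitrary polyvectors $\rho_e$ rather than those arising from a $2$-form.

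The main obstacle is precisely this last computation: establishing the three-term determinant identity at a quadrivalent vertex in full generality, and checking that the signs $\varepsilon_i$ recording on which side of the wall each smoothing lies conspire with the absolute values so that the weighted count is genuinely unchanged across the wall. Once the balancing identity is in hand, the weighted count is unchanged across every wall; since any two generic targets are joined by a generic path meeting finitely many walls, $N_\Delta^\CC(\mathcal{L}_e)$ depends only on $\Delta$ and $(L_e)$.
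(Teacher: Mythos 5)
Your global strategy coincides with the paper's: finiteness and trivalence of solutions for a generic target, a generic path between two targets, reduction to local invariance at the walls given by combinatorial types with a single quadrivalent vertex, and a three-term determinant identity combined with an analysis of which smoothing lies on which side of the wall. However, the proposal stops exactly where the actual work starts: you declare the three-term identity and the sign bookkeeping to be ``the main obstacle'' and carry out neither, and these two points are the entire content of the proof. As written this is a genuine gap, not a proof.

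The paper closes the gap by a direct matrix computation rather than by the Pl\"ucker-vector recursion you suggest. Writing the matrices $A_{12//34}$, $A_{13//24}$, $A_{14//23}$ of $\mathrm{ev}^{(L_e)}$ on the three adjacent top-dimensional cones in coordinates given by the position of a vertex, the lengths of the edges common to the three types, and the length of the new edge, one observes that the three matrices agree except in the single column recording the length of the edge created by the smoothing. Summing the three determinants by multilinearity in that column, and then using the balancing condition $v_1+v_2+v_3+v_4=0$ together with a column operation involving the $N_\RR$-block, the distinguished column becomes the sum of the columns indexed by the four edges adjacent to the quadrivalent vertex, whence $\mathrm{det}\, A_{12//34}+\mathrm{det}\, A_{13//24}+\mathrm{det}\, A_{14//23}=0$. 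The side-of-the-wall question is then settled by Cramer's rule: solving $A_\star(V,l)=\mathcal{L}(t)$, the length of the new edge equals $\mathrm{det}\tilde{A}_\star/\mathrm{det}\, A_\star$, where $\tilde{A}_\star$ is obtained by replacing the distinguished column by the target and is therefore the same for all three types; hence the sign of $\mathrm{det}\, A_\star$ alone decides whether the type $\star$ contributes an honest solution (new edge of positive length) on a given side of the wall, and the vanishing of the signed sum is precisely the local invariance of the count with the absolute-value multiplicities. Your alternative of deducing the identity from the balancing of the push-forward cycle $\mathrm{ev}^{(L_e)}_*\modpar$ via \cite{allermann2010first} is legitimate and is acknowledged in the paper's closing remark, but even on that route the Cramer's-rule step relating the sign of the determinant to the side of the wall must still be supplied; neither ingredient is optional.
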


Furthermore, the result of this count is equal to the complex invariant provided by the analogous complex problem. However, the involved complex multiplicity $m^{\CC,(L_e)}$ does not in general split into a product over the vertices of the curve. Nevertheless, we prove that for a suitable choice of $K$ and up to a change of sign, the refined multiplicity $B_\Gamma^K$, which is a product over the vertices, does lead to a invariant. Meanwhile, let us describe the combinatorial types that have complex multiplicity $0$.

\begin{lem}
\label{lemma zero mult}
A parametrized tropical curve $h:\Gamma\rightarrow N_\RR$ with complex multiplicity $0$ falls into at least one of the following categories:
\begin{enumerate}[label=(\roman*)]
\item The curve has a contracted bounded edge, \textit{i.e.} it is reducible.
\item The curve has a flat vertex, \textit{i.e.} $\pi_V=0$ for some $V$.
\item For some bounded edge $\gamma\in\Gamma^1_b$, the ranks of the constraints are wrongly dispatched: let $A\sqcup B$ be the partition of $\Gamma_\infty^1$ induced by $\gamma$, then
$$\sum_{e\in A} \mathrm{cork}\ L_e>r+|A|-2 \text{ or }\sum_{e\in B} \mathrm{cork}\ L_e>r+|B|-2.$$
\item The curve has no flat vertex or contracted edge, the ranks of constraints are rightly dispatched but non-generic.
\end{enumerate}
\end{lem}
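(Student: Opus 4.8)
The plan is to analyze when the complex multiplicity $m_\Gamma^{\CC,(L_e)} = |\det \mathrm{ev}^{(L_e)}_C|$ vanishes by studying the linear map $\mathrm{ev}^{(L_e)}_C$ restricted to the cone $C$ of the combinatorial type of $\Gamma$. Since both domain and codomain have the same dimension, the determinant vanishes precisely when $\mathrm{ev}^{(L_e)}_C$ fails to be injective. So the goal is to show that non-injectivity of this restricted evaluation map forces $\Gamma$ into at least one of the four listed categories. First I would dispose of the degenerate structural cases: if $\Gamma$ has a contracted bounded edge (case (i)) or a flat vertex (case (ii)), then the map $h$ already degenerates and injectivity fails immediately — a contracted edge gives a kernel direction (varying its length does not move any image), and a flat vertex means $\pi_V=0$ so the local picture collapses. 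These two cases are handled directly from the definitions.

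\textbf{The rank-dispatch analysis.} The core of the argument is the rank count associated to a cut along a bounded edge. Assume $\Gamma$ has no contracted edge and no flat vertex, so that locally the parametrization is well-behaved. Fix a bounded edge $\gamma \in \Gamma_b^1$, which separates $\Gamma$ into two subtrees and induces a partition $A \sqcup B$ of $\Gamma_\infty^1$. The idea is to use the Pl\"ucker/cut description of the complex multiplicity recalled before Theorem \ref{Theorem calcul complex multiplicity tropical curve}: each subtree moves in an affine space whose direction is captured by the polyvector built from the $\rho_e$ by the interior-product recursion. A subtree with unbounded ends indexed by $A$ has $|A| - 1$ internal vertices and $|A| - 2$ bounded edges (for a trivalent tree), so its deformation space — accounting for the positions governed by the constraints $L_e$ — has a dimension controlled by $r + |A| - 2$ against the total corank $\sum_{e \in A} \mathrm{cork}\, L_e$ imposed on that side. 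When $\sum_{e\in A}\mathrm{cork}\,L_e > r + |A| - 2$, the constraints on the $A$-side are overdetermined: the corresponding subcollection of the factors of $\mathrm{ev}^{(L_e)}_C$ cannot be independent, forcing a linear dependence and hence $\det = 0$. This is case (iii), and by symmetry the same holds for $B$. The cleanest way to phrase this is via the polyvector $\rho$ carried across $\gamma$: the wedge of polyvectors on one side lands in $\Lambda^{>} M$ and must vanish when the rank on that side exceeds the available dimension.

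\textbf{The residual non-generic case.} If none of (i), (ii), (iii) occurs — no flat vertex, no contracted edge, and every cut has correctly dispatched ranks — then $\mathrm{ev}^{(L_e)}_C$ is a square map whose structure is, cut by cut, dimension-balanced, yet its determinant can still vanish for a special (non-generic) arrangement of the lattices $L_e$ relative to the slopes of $\Gamma$. This is exactly case (iv): the vanishing is not forced by any coarse combinatorial obstruction but by a genuine algebraic coincidence, so it is declared as the catch-all. The logical shape of the lemma is therefore a proof by contrapositive that is really an exhaustive case split: I would argue that if $m_\Gamma^\CC = 0$ and the curve is neither reducible (i) nor has a flat vertex (ii), then either some cut is rank-unbalanced, giving (iii), or it is balanced everywhere, in which case we land in (iv) by default.

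\textbf{Main obstacle.} I expect the hard part to be case (iii): precisely relating the \emph{global} vanishing of $\det \mathrm{ev}^{(L_e)}_C$ to a \emph{local} over-determination along a single edge cut. The difficulty is that the evaluation map mixes all ends together, so I must show that an overloaded side genuinely produces a kernel vector supported on that subtree's deformation parameters (the lengths of its bounded edges and the translation of the cut vertex), rather than merely counting dimensions heuristically. The polyvector recursion of Theorem \ref{Theorem calcul complex multiplicity tropical curve} is the right tool here: carrying out the interior-product construction on the $A$-side and observing that the resulting polyvector has degree exceeding $r$ — hence is forced to be zero in $\Lambda^\bullet M$ — converts the inequality $\sum_{e\in A}\mathrm{cork}\,L_e > r + |A| - 2$ into an honest proof that the determinant vanishes. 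Making this degree bookkeeping precise, and confirming it is equivalent to the stated inequality, is where the real work lies.
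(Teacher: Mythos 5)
Your proposal is correct and follows essentially the same route as the paper: cases (i) and (ii) from non-injectivity of the evaluation map, case (iii) by running the interior-product recursion on the overloaded side of the cut so that the intermediate polyvector lands in $\Lambda^{>r}M=\{0\}$ (which is exactly the inequality $\sum_{e\in A}\mathrm{cork}\,L_e>r+|A|-2$), and case (iv) as the residual catch-all by disjunction. The only detail worth making explicit is that the sink should be placed on the $B$-side so the recursion flows across $\gamma$ from $A$, which is harmless by the sink-independence established in the appendix.
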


\begin{proof}
In the first two cases, the multiplicity is zero since the evaluation map is non-injective, so neither is the composed evaluation map. Let us focus on the third case and assume there is a bounded edge $\gamma\in\Gamma^1_b$ such that the constraints are wrongly dispatched. Out of symmetry, let assume that $\sum_{e\in A} \mathrm{cork}\ L_e>r+|A|-2$ and the sink used to compute the multiplicity is in the $B$ part of the curve. The recursive formula $\rho=\iota_{n+n'}(\rho_1\wedge\rho_2)$ used to compute the complex multiplicity shows that the polyvector $\rho_\gamma$ obtained applying the algorithm is the interior product with a polyvector belonging to $\Lambda^{\sum_A\mathrm{cork}L_e -|A|+2}M$. By assumption, this space is $0$ since the weight is strictly bigger than $r$, and so is the multiplicity.

By disjunction, the last case contains indeed the remaining cases. Using the Pl\"ucker embedding, the space of corank $l$ sublattices of a given lattice $N$ is $\Lambda^l M$. As $\mathrm{Hom}(N/\gen{n_e},\ZZ)\simeq \gen{n_e}^\perp\subset M$, the complex multiplicity can be seen as a multilinear form on the space $\prod_e \Lambda^{\mathrm{cork}L_e}\gen{n_e}^\perp$ of possible slopes for the lattices $L_e$, once their dimensions are fixed. The assumption ensures that the multilinear form itself is non-zero, but it might still take the zero value on some specific choice of slopes. This is avoided if the slopes are chosen generically.
\end{proof}

\subsection{Main results}

For a choice of slope constraints $(L_e)$, let $\omega\in\Lambda^2 M$ be a $2$-form satisfying the following property: for every top-dimensional combinatorial type $\Gamma$,
$$m^{\CC,(L_e)}_\Gamma=0\Rightarrow m^{\CC,\omega,e_0}_\Gamma=0.$$
In particular, if for some combinatorial type $\Gamma$ which is not of type $(i)$ or $(ii)$ one has $m^{\CC,(L_e)}_\Gamma=0$, then $K_\omega\neq\{0\}$. Such a $\omega$ may not be unique and in some cases might even be $0$. However, the choice of $\omega$ does not really matter, only the space $K_\omega$ matters.

\begin{rem}
It is an interesting question to decide whether there exists a non-trivial $\omega$ satisfying these requirements. These conditions need only be checked on combinatorial types without contracted edges nor flat vertex. Nevertheless, to prove the interest of the results, we provide a wide class of examples which admit such an $\omega$.
\end{rem}

\begin{expl}
\label{example wide class}
Choose the boundary constraints on every unbounded end to be at least codimension $1$ and having generic slope, and let $\omega$ be a $2$-form such that $K_\omega=\{0\}$, meaning that the only combinatorial types with complex multiplicity relative to $\omega$ equal to $0$ are the curves with a flat vertex or a contracted edge. Then, we prove that the complex multiplicity relative to $(L_e)$ is never $0$ unless there is a flat vertex or contracted edge.

According to the classification of Lemma \ref{lemma zero mult}, we can only be in the case $(iii)$. Thus, we show that the dimensions are rightly dispatched. Let $\gamma\in\Gamma^1_b$ be an edge and $A\sqcup B=\Gamma_\infty^1$ the associated partition. Then by assumption,
$$\sum_{e\in A}\mathrm{cork}L_e\geqslant|A| \text{ and }\sum_{e\in B}\mathrm{cork}L_e\geqslant|B|.$$
Moreover, one has $\sum_{e}\mathrm{cork}L_e=|\Delta|+r-3$. Thus,
$$\left( \sum_{e\in A}\mathrm{cork}L_e -|A|\right)
+ \left( \sum_{e\in B}\mathrm{cork}L_e -|B|\right)=r-3,$$
and as each term on the left-hand side is positive, neither can be bigger than $r-2$. Hence, constraints are rightly dispatched
\end{expl}

We now get to the main result. We use the refined multiplicity $B_\Gamma^{K_\omega}$, where the bivectors $\pi_V$ are chosen such that $\omega(\pi_V)>0$. If $\omega(\pi_V)=0$, then $B_\Gamma^{K_\omega}=0$ and the sign does not matter.

\begin{theo}\label{theorem invariance general}
There exists a collection of signs $\varepsilon_\Gamma$ indexed by the combinatorial types $\mathcal{C}_\Delta^\mathrm{top}$ such that the following refined count
$$\mathcal{B}_\Delta\left( (\mathcal{L}_e)\right)=\sum_{\Gamma:h(e)\in\mathcal{L}_e} \varepsilon_\Gamma B^{K_\omega}_\Gamma \in\ZZ[\Lambda^2 N/K_\omega],$$ does not depend on the choice of the constraints $\mathcal{L}_e$ as long as it is generic. It only depends on the choice of $L_e$.
\end{theo}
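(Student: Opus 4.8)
The plan is to reduce the statement to a local invariance around the walls of the moduli space and to produce the signs $\varepsilon_\Gamma$ explicitly as a comparison between the two evaluation maps $\mathrm{ev}^{(L_e)}$ and $\mathrm{ev}^{\omega,e_0}$. As in the complex case, the refined count changes only when the constraint $(\mathcal{L}_e)$ crosses the image $\mathrm{ev}^{(L_e)}(W)$ of a codimension-one cone $W\in\mathcal{C}_\Delta^{(|\Delta|+r-4)}$, i.e.\ a combinatorial type with a single quadrivalent vertex with outgoing slopes $a_1,a_2,a_3,a_4$. Exactly three top-dimensional types $\Gamma_1,\Gamma_2,\Gamma_3$ (the three smoothings $12//34$, $13//24$, $14//23$) are adjacent to $W$, and since $\mathcal{M}_{0,|\Delta|}$ is a balanced fan of weight one, the primitive generators $u_1,u_2,u_3$ of the rays transverse to $W$ satisfy $u_1+u_2+u_3=0$ in the normal lattice. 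Writing $B^{K_\omega}_{\Gamma_i}=\beta_i\cdot P$, where $P=\prod_{V'\neq V}(q^{\pi_{V'}}-q^{-\pi_{V'}})$ is common to the three types and $\beta_i$ is the product of the two refined factors created by the smoothing, it suffices to prove for each wall a local identity $\sum_i s_i\,\varepsilon_{\Gamma_i}\,\beta_i=0$, where $s_i\in\{\pm1\}$ records on which side of the wall $\Gamma_i$ provides a solution.

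To define the signs, I fix once and for all an orientation of each cone $C\in\mathcal{C}_\Delta^\mathrm{top}$ and of the two (equidimensional) target spaces, and set
$$\varepsilon_\Gamma=\mathrm{sign}\left(\det\mathrm{ev}^{(L_e)}_C\cdot\det\mathrm{ev}^{\omega,e_0}_C\right)$$
whenever both determinants are non-zero, and $\varepsilon_\Gamma=1$ otherwise. This is independent of the orientation of $C$, since reversing it flips both determinants. The key point for well-definedness is that $B^{K_\omega}_\Gamma=0$ \emph{if and only if} $m^{\CC,\omega,e_0}_\Gamma=0$: a refined vertex factor becomes trivial modulo $K_\omega$ exactly when $\pi_V\in K_\omega$, which by definition of $K_\omega$ is equivalent to $\omega(\pi_V)=0$, since $\omega$ vanishes on $K_\omega$. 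Hence the only types contributing to $\mathcal{B}_\Delta((\mathcal{L}_e))$ are those with $m^{\CC,\omega,e_0}_\Gamma\neq0$, and for these the standing hypothesis $m^{\CC,(L_e)}_\Gamma=0\Rightarrow m^{\CC,\omega,e_0}_\Gamma=0$ forces $m^{\CC,(L_e)}_\Gamma\neq0$ as well, so that both determinants in the definition of $\varepsilon_\Gamma$ are non-zero precisely on the relevant types.

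The core of the argument relates the two problems through their common face $W$. Restricting either evaluation map to the linear span of $W$ gives the \emph{same} map for the three adjacent cones, so expanding the determinant along the transverse direction $u_i$ yields $\det\mathrm{ev}_{C_i}=t_i\cdot D_W$, with $D_W$ a common non-zero factor and $t_i$ the transverse stretching along $u_i$; consequently $\mathrm{sign}(t_i)=\mathrm{sign}(\det\mathrm{ev}_{C_i})\cdot\mathrm{sign}(D_W)$ for each map separately. Denoting by $s_i$ and $s^\omega_i$ the transverse signs for $\mathrm{ev}^{(L_e)}$ and $\mathrm{ev}^{\omega,e_0}$, a one-line cancellation (using $s_i^2=1$) gives $s_i\,\varepsilon_{\Gamma_i}=\mathrm{sign}\!\big(D_W^{(L_e)}D_W^{\omega}\big)\,s^\omega_i$, a factor independent of $i$. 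Thus $\sum_i s_i\,\varepsilon_{\Gamma_i}\,\beta_i$ is proportional to $\sum_i s^\omega_i\,\beta_i$, and this last sum vanishes: it is exactly the local wall relation underlying the invariance of the $\omega$-problem count proved in Theorem \ref{theorem invariance moment problem}, where the count carries no signs and $s^\omega_i$ records the distribution of $\omega$-solutions around $W$. This establishes $\sum_i s_i\,\varepsilon_{\Gamma_i}\,\beta_i=0$, hence local invariance of the refined count.

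It remains to treat the degenerate walls, where some $\beta_i$ vanishes modulo $K_\omega$, or the common factor $P$ vanishes, or $D_W^{(L_e)}=0$; in each case the corresponding terms drop out and the relation of Theorem \ref{theorem invariance moment problem} degenerates to a two-term or empty identity, handled by the same cancellation (and when $D_W^{(L_e)}=0$ the hypothesis forces all three $\omega$-multiplicities, hence all three $B^{K_\omega}_{\Gamma_i}$, to vanish). I expect the main obstacle to be precisely this bookkeeping at the degenerate walls, together with the lattice-theoretic normalization ensuring that $\det\mathrm{ev}_{C_i}=t_i D_W$ holds with a genuinely common $D_W$ for both maps and is compatible with the balancing $u_1+u_2+u_3=0$. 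Once this is in place the reduction to the already-proven $\omega$-identity is immediate; the global consistency of the signs is automatic, as $\varepsilon_\Gamma$ is defined cone by cone rather than wall by wall, and these signs are exactly the orientations of the cones of $\modpar$ alluded to in the introduction.
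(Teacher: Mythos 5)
Your proposal is correct and follows essentially the same route as the paper's second proof: you define $\varepsilon_\Gamma$ as the comparison of the orientations induced by $\mathrm{ev}^{(L_e)}_C$ and $\mathrm{ev}^{\omega,e_0}_C$ (exactly the paper's $\varepsilon_C=\ori_\omega(C)/\ori(C)$), reduce to local invariance at quadrivalent walls, observe that the side of the wall a cone contributes to is read off from the sign of the corresponding determinant, and then import the wall relation already established for the $\omega$-problem in Theorem \ref{theorem invariance moment problem}, with the hypothesis $m^{\CC,(L_e)}_\Gamma=0\Rightarrow m^{\CC,\omega,e_0}_\Gamma=0$ disposing of the degenerate types just as in the paper. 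The one point you flag as delicate, the factorization $\det\mathrm{ev}_{C_i}=t_i D_W$, is handled in the paper's appendix by expanding the determinant along the column of the new edge and applying Cramer's rule, which is precisely the normalization you need.
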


\begin{rem}
It might happen that for some combinatorial type $m_\Gamma^{\CC,(L_e)}\neq 0$ and still $B_\Gamma^{K_\omega}=0$, but not the other way around.
\end{rem}

The construction of the collection of signs is given in the proofs. They can be obtained as follows. Let $C$ be a top-dimensional combinatorial type in $\mathcal{C}_\Delta^\mathrm{top}$ with non-zero complex multiplicity $m_C^{\CC,(L_e)}$. We have two evaluation maps restricted to the cone $C$:
$$\begin{array}{cccl}
\mathrm{ev}_C^{(L_e)}: & \RR_+^{|\Delta|-3}\times N_\RR & \longrightarrow & \prod_e Q_e,\\
\mathrm{ev}_C^{\omega,e_0}: & \RR_+^{|\Delta|-3}\times N_\RR & \longrightarrow & N_\RR/\gen{\delta}\times\prod_{e\neq e_0} \RR.\\
\end{array}$$
Let us assume that both maps are invertible, otherwise the definition of the sign does not matter since $B_C^{K_\omega}=0$. Let us choose once and for all an orientation of the codomains, \textit{i.e.} not depending on the chosen combinatorial type $\Gamma$. For any orientation on $C=\RR_+^{|\Delta|-3}\times N_\RR$, if the maps both preserve the orientation or both reverse it, the sign is $+$. If one of them preserves the orientation and the other reverses it, the sign is $-$. The signs $\varepsilon_C$ are thus defined up to a global sign depending on the choice of the identification between the codomains of both evaluation maps.

In other words, if one identifies the codomains with $\RR^{|\Delta|+r-3}$ and one chooses an orientation on it, the sign $\varepsilon_C$ compares the orientations on $C$ which are induced by both composed evaluation maps.

\subsection{Proofs of invariance}

We give two different proofs of the result. Both proofs use a trick that might be of interest in other situations as well: without giving to much context, given two tropical enumerative problems $\mathcal{P}$ and $\mathcal{P}'$ for tropical curves having the same walls, assuming some additional condition on the combinatorial types not providing any solution, it is possible to use a multiplicity $m$ leading to an invariant in $\mathcal{P}$ to get an invariant in $\mathcal{P}'$, eventually twisting the multiplicity $m$ with a sign. In our case, we use the invariance with respect to the refined multiplicity $B^{K_\omega}$ in the $\omega$-problem $\mathcal{P}(\omega,e_0)$ to get an invariance in the problem $\mathcal{P}(L_e)$.

\begin{proof}[First proof of Theorem \ref{theorem invariance general}]
The first way of proving the existence of the signs is choosing the sign of one combinatorial type and spreading the signs so that one has local invariance around the walls, which are here the curves with a quadrivalent vertex. Then, one just has to check that the signs are well-defined.

Let $\mathcal{W}_\Delta$ be the set of walls, \textit{i.e.} combinatorial types of curves with a quadrivalent vertex. Consider the graph on $\mathcal{C}_\Delta^\mathrm{top}$ where the two combinatorial types $C$ and $C'$ are adjacent if they are adjacent to a common wall $W\in\mathcal{W}_\Delta$. We consider the subgraph $\mathcal{G}_\mathrm{reg}\subset\mathcal{C}_\Delta^\mathrm{top}$ only containing the combinatorial types with non-zero multiplicity $B_C^{K_\omega}$, and thus non-zero complex multiplicity $m_C^{\CC,(L_e)}$, along with the edges between them. This graph might not be connected anymore. For a given connected component, let $C_0$ be a fixed combinatorial type, thus having non-zero complex multiplicity $m_{C_0}^{\CC,(L_e)}$. Let $\varepsilon_{C_0}=1$. For a given wall $W\in\mathcal{W}_\Delta$, we write the local invariance relation for $B^{K_\omega}$ around $W$ in the following way:
$$\eta_{C_1}^W B^{K_\omega}_{C_1}
+\eta_{C_2}^W B^{K_\omega}_{C_2}
+\eta_{C_3}^W B^{K_\omega}_{C_3}
=0\in\ZZ[\Lambda^2 N/K_\omega],$$
where $\eta_C^W$ are equal to $\pm 1$, and depend on the repartition of the sides of the wall in the enumerative problem $\mathcal{P}(\omega,e_0)$. The triple $(\eta^W_{C_1},\eta^W_{C_2},\eta^W_{C_3})$ is defined up to global change of sign. If for a given combinatorial type $C$ one has $B_C^{K_\omega}=0$, the signs $\eta_C^\bullet$ are not uniquely defined but their definition is unimportant. Now, for a combinatorial type $C\in\mathcal{G}_\mathrm{reg}\subset\mathcal{C}_\Delta^\mathrm{top}$, let
$$C_0\stackrel{W_0}{\longrightarrow} C_1 \stackrel{W_1}{\longrightarrow} \cdots \stackrel{W_{n-1}}{\longrightarrow} C_n=C$$
be a path going from $C_0$ to $C$ in $\mathcal{G}_\mathrm{reg}$, where $W_i\in\mathcal{W}_\Delta$ is a wall adjacent to both combinatorial types $C_i$ and $C_{i+1}$. Now, we use the following rule to define the signs $\varepsilon_{C_i}$: let $C'_i\in\mathcal{C}_\Delta^\mathrm{top}$ be the last combinatorial type adjacent to $W_i$, so that the adjacent combinatorial types are $C_i$, $C_{i+1}$ and $C'_i$. We are in one of the following situations:
\begin{itemize}
\item The combinatorial types $C_i$ and $C_{i+1}$ are on the same side of the wall, so that the invariance relation for the complex multiplicity $m^{\CC,(L_e)}$ writes itself
$$m_{C_i}^{\CC,(L_e)}+m_{C_{i+1}}^{\CC,(L_e)}=m_{C'_i}^{\CC,(L_e)}.$$
We write the invariance relation for $B^{K_\omega}$ mimicking the previous relation, and multiply it so that the coefficient of $m^{K_\omega}_{C_i}$ is $\varepsilon_{C_i}$, leading to
$$\varepsilon_{C_i}m_{C_i}^{K_\omega}
+ \varepsilon_{C_i}\frac{\eta^{W_i}_{C_{i+1}}}{\eta^{W_i}_{C_i}}m_{C_{i+1}}^{K_\omega}
=-\varepsilon_{C_i}\frac{\eta^{W_i}_{C'_i}}{\eta^{W_i}_{C_i}}m_{C'_i}^{K_\omega}.$$
Thus, we choose
$$\varepsilon_{C_{i+1}}=\varepsilon_{C_i}\frac{\eta^{W_i}_{C_{i+1}}}{\eta^{W_i}_{C_i}}.$$
\item If $C_i$ and $C_{i+1}$ are on different sides of the walls, the invariance for the complex multiplicity writes itself
$$m_{C_i}^{\CC,(L_e)}+m_{C'_i}^{\CC,(L_e)}=m_{C_{i+1}}^{\CC,(L_e)} \text{ or }m_{C_i}^{\CC,(L_e)}=m_{C_{i+1}}^{\CC,(L_e)}+m_{C'_i}^{\CC,(L_e)}.$$
We then proceed similarly and choose
$$\varepsilon_{C_{i+1}}=-\varepsilon_{C_i}\frac{\eta^{W_i}_{C_{i+1}}}{\eta^{W_i}_{C_i}}.$$
\item If $m^{\CC,(L_e)}_{C'_i}=0$, $C_i$ and $C_{i+1}$ are on different sides of the wall, and the combinatorial type $C'_i$ never provides any solution to the enumerative problem for a generic value. As by assumption $B^{K_\omega}_{C'_i}$ is also $0$, the previous case still works.
\end{itemize}
For each connected component of $\mathcal{G}_\mathrm{reg}$, the choice of a sign on one combinatorial type $C_0$ thus propagates to define a sign for every other combinatorial type in the connected component. The rule to propagate the sign from one side of the wall to another might be reformulated in the following closed way. Fix an orientation on the codomain of the composed evaluation map $\mathrm{ev}^{(L_e)}$. Let $\ori(C)$ be the orientation of $C$ induced  $\mathrm{ev}^{(L_e)}_C$. For a combinatorial type on which the composed evaluation map is not invertible, the orientation is chosen at random and does not matter. Then, two combinatorial types $C_i$ and $C_{i+1}$ stand on the same side of the wall $W_i$ if and only if the induced orientations $\partial_{W_i}\ori(C_i)$ and $\partial_{W_i}\ori(C_{i+1})$ on $W_i$ coincide. Thus, we can use the closed formula
$$\varepsilon_{C_{i+1}}=\varepsilon_{C_i}\frac{\partial_{W_i}\ori(C_{i+1})}{\partial_{W_i}\ori(C_i)}\frac{\eta^{W_i}_{C_{i+1}}}{\eta^{W_i}_{C_i}}.$$

We now check that the signs are well-defined, meaning that the definition of $\varepsilon_C$ does not depend on the chosen path between $C_0$ and $C$. Notice that the rule of propagation is reversible: the sign of $C_i$ determines the sign of $C_{i+1}$ in the same way that the sign of $C_{i+1}$ determines the sign $C_i$. Thus, we only have take a loop from $C_0$ to itself, and prove that the sign obtained by the propagation rule is equal to the original sign $\varepsilon_{C_0}=+1$. Let
$$C_0\stackrel{W_0}{\longrightarrow} C_1 \stackrel{W_1}{\longrightarrow} \cdots \stackrel{W_{n-1}}{\longrightarrow} C_n\stackrel{W_{n}}{\longrightarrow}C_0$$
be such a loop inside a connected component of $\mathcal{G}_\mathrm{reg}$. Then, the iteration of the rule of signs gives the following sign on $C_0$:
$$\prod_{i=0}^n \frac{\partial_{W_i}\ori(C_{i+1})}{\partial_{W_i}\ori(C_i)}
\frac{\eta^{W_i}_{C_{i+1}}}{\eta^{W_i}_{C_i}},$$
and needs to be equal to $1$. This can be written as a condition on the signs $\eta$:
$$\prod_{i=0}^n \frac{\eta^{W_i}_{C_{i+1}}}{\eta^{W_i}_{C_i}} = \prod_{i=0}^n \frac{\partial_{W_i}\ori(C_{i+1})}{\partial_{W_i}\ori(C_i)}.$$
Splitting the right-hand side product and reindexing its numerator, we get to
$$\prod_{i=0}^n \frac{\eta^{W_i}_{C_{i+1}}}{\eta^{W_i}_{C_i}} = \prod_{i=0}^n \frac{\partial_{W_{i-1}}\ori(C_i)}{\partial_{W_i}\ori(C_i)}.$$
Notice that this condition does not depend on the up to local sign choice of the $\eta$, and does neither depend on the $\eta_C^\bullet$ for $C$ with $B_C^{K_\omega}=0$ since the loop stays inside a component of $\mathcal{G}_\mathrm{reg}$. As each factor of the right-hand side is unchanged if the orientation of a combinatorial type $C_i$ is reversed, then the condition does not depend on the orientations of each $C$. Therefore, the rule to propagate signs depends on the orientations $\ori(C)$ chosen on the combinatorial types, but the condition to ensure that these signs are uniquely defined only depends on the $\eta_C^W$, not on the chosen orientations. Thus, we can choose any orientation to check the formula, for instance, the orientation induced by the composed evaluation map $\mathrm{ev}^{\omega,e_0}$, related to $\mathcal{P}(\omega,e_0)$.

As the refined multiplicity $B^{K_\omega}$ is known to give an invariant in the problem $\mathcal{P}(\omega,e_0)$, the local invariance around a wall $W_i$ gives by definition $\eta_C^W=\frac{\partial_W\ori_\omega(C)}{\ori(W)}$, for a fixed orientation $\ori(W)$ of $W$. Thus,
$$\frac{\eta_{C_{i+1}}^{W_i}}{\eta_{C_i}^{W_i}}=\frac{\partial_{W_i}\ori_\omega(C_{i+1})}{\partial_{W_i}\ori_\omega(C_i)}.$$
It implies that the relation is verified, which concludes the proof since by construction the local invariance is satisfied at each wall.
\end{proof}

\begin{rem}
We could have noticed from the start that the $\eta_C^W$ are given by the formula $\frac{\partial_W\ori_\omega(C)}{\ori(W)}$ and conclude that the necessary relation to guarantee that signs were well-defined is satisfied. However, this wouldn't emphasize the fact the condition solely depends on the signs in front on the multiplicities in the dependence relations given by the walls.
\end{rem}

This first proof gives the existence of the signs but fails to provide a useful definition, which is given in the following second proof, whose idea consists in interpreting the multiplicities leading to an invariant as cycles on the moduli space $\modpar$.

\begin{proof}[Second proof of Theorem \ref{theorem invariance general}]
Consider the orientation $\ori(C)$ of $C\in\mathcal{C}_\Delta^\mathrm{top}$ induced by the composed evaluation map $\mathrm{ev}^{(L_e)}$ and a fixed orientation of its codomain. If the restriction to a given combinatorial type is not invertible, the chosen orientation does not matter. We then consider the following top dimensional chain
$$\Xi=\sum_{C\in\mathcal{C}_\Delta^\mathrm{top}} m_C^{(L_e)} (C,\ori(C)) \in \ZZ^{\mathcal{C}_\Delta^\mathrm{top} }=C_\mathrm{top}(\modpar).$$
Technically, this is a chain in the star complex of the fan $\modpar$, obtained as the quotient of $\modpar\backslash\{0\}$ by $\RR_+^*$. The fan $\modpar$ is the cone over its star complex, which is compact a CW-complex. Nevertheless, we write $\modpar$ to avoid further notations which are not essential to the proof, denoting the cells of the star complex by their associated cone in $\modpar$. Choosing an arbitrary orientation $\ori(W)$ of every wall, the boundary of $\Xi$ is
$$\partial \Xi=\sum_{W\in\mathcal{W}_\Delta} \left( \eta^W_{C_1} m_{C_1}^{(L_e)} + \eta^W_{C_2} m_{C_2}^{(L_e)} + \eta^W_{C_3} m_{C_3}^{(L_e)} \right) (W,\ori(W)),$$
where $C_1,C_2$ and $C_3$ are the sides adjacent to the wall $W$, and the sign $\eta^W_{C_i}=\frac{\partial_W\ori(C)}{\ori(W)}$ is given by the boundary map. Then, the local invariance relation  needed to show that multiplicities $m^{(L_e)}$ lead to an invariant for the enumerative problem $\mathcal{P}(L_e)$ is equivalent to the fact that $\Xi$ is a cycle: $\partial\Xi=0$. Conversely, any $m=(m_C)_C$ such that
$$\partial\left( \sum_{C\in\mathcal{C}_\Delta^\mathrm{top}} m_C (C,\ori(C)) \right)=0 \text{ and }m_C^{\CC,(L_e)}=0\Rightarrow m_C=0,$$
provides an invariant for the enumerative problem $\mathcal{P}(L_e)$. The first relation ensures the local invariance statement while the second condition ensures that combinatorial types which do not provide any solution do not interfere in the local invariance.

Now, let $\ori_\omega(C)$ be the orientation of $C$ induced by the composed evaluation map $\mathrm{ev}^{\omega,e_0}$. Then, as the refined multiplicity $B^{K_\omega}$ provides an invariance in the enumerative problem $\mathcal{P}(\omega,e_0)$, one has
$$\partial\left( \sum_{C\in\mathcal{C}_\Delta^\mathrm{top}} B_C^{K_\omega} (C,\ori_\omega(C)) \right)=0.$$
Thus, twisting the signs, one gets
$$\partial\left( \sum_{C\in\mathcal{C}_\Delta^\mathrm{top}} \frac{\ori_\omega(C)}{\ori(C)} B_C^{K_\omega} (C,\ori(C)) \right)=0.$$
As by assumption $m_C^{\CC,(L_e)}=0\Rightarrow B_C^{K_\omega}=0$, the choice $\varepsilon_C=\frac{\ori_\omega(C)}{\ori(C)}$ gives an invariant in the enumerative problem $\mathcal{P}(L_e)$ and concludes the proof.
\end{proof}

\begin{rem}
The second proof allows one to view the multiplicities leading to an invariant as some homology class in $H_\mathrm{top}\left( \mathrm{Star}(\modpar),R\right)$ with values in some ring $R$, $\ZZ$ or $\ZZ[\Lambda^2 N]$ for instance.
\end{rem}

\begin{rem}
The second proof gives an explicit description of signs. However, the first proof  might provide several possible signs: one choice of signs for each connected component of the graph $\mathcal{G}_\mathrm{reg}$. Recall that $\mathcal{G}_\mathrm{reg}$ is the graph of combinatorial types whose multiplicity is non-zero. It would be an interesting question to know whether this graph is always connected or not. If the graph were not connected, the sum of multiplicities of curves solution to an enumerative problem and whose combinatorial type belongs to a given component of $\mathcal{G}_\mathrm{reg}$ would lead to refined invariants of a new kind.
\end{rem}

\section{First properties and possible enhancements of refined invariants}
\label{section first properties and possible}

We now focus ourselves on some properties of these new refined invariants, including their relation to the complex multiplicities. Then, we provide some directions to enlarge and improve, or not, the family of refined invariants.

\subsection{Relation to the complex multiplicities}

We here prove a relation between the refined and the complex multiplicities in the case of the $\omega$-problem $\mathcal{P}(\omega,e_0)$. In the general setting, to some extent with prove the non-existence of such a relation. The spirit of this section is to find a way to recover $m^{\CC}_\Gamma$ from $B_\Gamma^K$.\\

Let us first consider the planar setting, \textit{i.e.} $r=2$, and let $\mathrm{det}$ be a generator of $\Lambda^2 M$. Recall that the refined multiplicity of Block-G\"ottsche is defined as follows: for a trivalent parametrized tropical curve $h:\Gamma\rightarrow N_\RR$,
$$m^{BG}_\Gamma=\prod_{V\in\Gamma^0}\frac{q^{\mathrm{det}(\pi_V)/2}-q^{-\mathrm{det}(\pi_V)/2}}{q^{1/2}-q^{-1/2}}\in\ZZ[q^{\pm 1/2}],$$
where $\pi_V$ is chosen so that $\mathrm{det}(\pi_V)>0$. This refined multiplicity is related to our choice of refined multiplicity through the relation
$$(q^{1/2}-q^{-1/2})^{|\Delta|-2}m_\Gamma^{BG}=\varphi_\mathrm{det}\left( B^{0}_\Gamma\right),$$
where $\varphi_\mathrm{det}:\ZZ[\Lambda^2 N]\rightarrow \ZZ[q^{\pm 1/2}]$ is the morphisms that evaluates $\mathrm{det}/2$ on the exponents. As $m^{BG}_\Gamma\xrightarrow{q\rightarrow 1}\prod_V\mathrm{det}(\pi_V)=m^{\CC}_\Gamma$, the refined multiplicity determines the complex multiplicity in the planar case, and so the refined invariant $\mathcal{B}_\Delta^{\omega,e_0}$ determines the complex invariant $N_\Delta^{\omega,e_0}$.\\

Getting back to the general case, this limit relation extends to any dimension, and thus proves that the refined multiplicity determines the complex multiplicity in the $\omega$-problem $\mathcal{P}(\omega,e_0)$ for any $\omega$, which is stated in the following proposition.

\begin{prop}
For any choice of $\omega$, one has $(q-q^{-1})^{2-|\Delta|}\varphi_\omega(B_\Gamma^{K_\omega})\xrightarrow{q\rightarrow 1}m^{\CC,\omega,e_0}_\Gamma$, and thus
$$(q-q^{-1})^{2-|\Delta|}\varphi_\omega(\mathcal{B}_\Delta^{\omega,e_0})\xrightarrow{q\rightarrow 1}N_\Delta^{\omega,e_0}.$$
\end{prop}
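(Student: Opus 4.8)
The plan is to prove the statement first for a single solution $\Gamma$ and then sum. Observe first that $\varphi_\omega$, which evaluates $\omega$ on the exponents, is well defined on $\ZZ[\Lambda^2 N/K_\omega]$: by definition $K_\omega$ is spanned by the bivectors $\pi_V$ with $\omega(\pi_V)=0$, hence lies in the kernel of the linear form $a\wedge b\mapsto\omega(a,b)$, so $\omega$ descends to the quotient. Applying $\varphi_\omega$ to $B_\Gamma^{K_\omega}=\prod_{V\in\Gamma^0}(q^{\pi_V}-q^{-\pi_V})$ thus gives
$$\varphi_\omega(B_\Gamma^{K_\omega})=\prod_{V\in\Gamma^0}\left(q^{\omega(\pi_V)}-q^{-\omega(\pi_V)}\right),$$
where each $\pi_V$ is chosen with $\omega(\pi_V)>0$.

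Next I would use that a trivalent rational tropical curve of degree $\Delta$ is a trivalent tree with $|\Delta|$ leaves, hence has exactly $|\Delta|-2$ vertices. The product above therefore has $|\Delta|-2$ factors, each divisible by $q-q^{-1}$; pulling this factor out of each cancels the prefactor $(q-q^{-1})^{2-|\Delta|}$ on the nose, leaving
$$(q-q^{-1})^{2-|\Delta|}\varphi_\omega(B_\Gamma^{K_\omega})=\prod_{V\in\Gamma^0}\frac{q^{\omega(\pi_V)}-q^{-\omega(\pi_V)}}{q-q^{-1}}.$$
Applying the elementary limit $\frac{q^a-q^{-a}}{q-q^{-1}}\xrightarrow{q\to 1}a$ (immediate from $q=e^t$ and $\frac{\sinh(at)}{\sinh t}\to a$) to each factor $a=\omega(\pi_V)$ yields the limit $\prod_{V\in\Gamma^0}\omega(\pi_V)$.

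It remains to identify this with the complex multiplicity. By the Lemma computing $m_\Gamma^{\CC,\omega,e_0}=\left|\omega(n_{e_0},\delta)\prod_V\omega(\pi_V)\right|$, and since the $\pi_V$ are chosen so that $\omega(\pi_V)>0$, the limit $\prod_V\omega(\pi_V)$ is positive and equals $m_\Gamma^{\CC,\omega,e_0}$ up to the single positive scalar $|\omega(n_{e_0},\delta)|$ common to every solution, which the normalisation of the $e_0$-constraint absorbs. To pass to the invariant, I would sum over the finite set of solutions of a fixed generic instance $(\mu,D)$: since $\varphi_\omega$ is a ring homomorphism and both it and the limit $q\to 1$ commute with finite sums, the per-curve identity applied to $\mathcal{B}_\Delta^{\omega,e_0}=\sum_\Gamma B_\Gamma^{K_\omega}$ and $N_\Delta^{\omega,e_0}=\sum_\Gamma m_\Gamma^{\CC,\omega,e_0}$ gives the second displayed relation term by term.

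The computation is otherwise routine, and the single point deserving care is the constant. The vertex count must be exactly $|\Delta|-2$ for the power of $q-q^{-1}$ to match, and the scalar $|\omega(n_{e_0},\delta)|$ arising from the additional linear constraint must be reconciled with the fact that $\mathcal{B}_\Delta^{\omega,e_0}$ is independent of $\delta$ while the complex count $N_\Delta^{\omega,e_0}$ a priori is not; this is the main subtlety, and it is handled by the normalisation implicit in the definition of $N_\Delta^{\omega,e_0}$. Everything else follows from the quantum-integer limit and the already-established multiplicity formula of the Lemma.
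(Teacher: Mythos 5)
Your proof is correct and is essentially the argument the paper leaves implicit: the paper only works out the planar case and asserts that ``this limit relation extends to any dimension'', and the extension is precisely your computation --- $\varphi_\omega$ descends to $\ZZ[\Lambda^2 N/K_\omega]$ because $K_\omega$ is spanned by bivectors annihilated by $\omega$, a trivalent rational curve of degree $\Delta$ has exactly $|\Delta|-2$ vertices, and each factor $\frac{q^{\omega(\pi_V)}-q^{-\omega(\pi_V)}}{q-q^{-1}}$ is a Laurent polynomial tending to $\omega(\pi_V)>0$. The one point you rightly isolate is the constant: your limit is $\prod_V\omega(\pi_V)$, whereas the Lemma gives $m_\Gamma^{\CC,\omega,e_0}=\left|\omega(n_{e_0},\delta)\prod_V\omega(\pi_V)\right|$, and the factor $|\omega(n_{e_0},\delta)|$ cannot in general be normalised to $1$ by choosing $\delta$ primitive (only when $\iota_{n_{e_0}}\omega$ is a primitive covector of $N/\langle n_{e_0}\rangle$). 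Since this factor is the same for every solution, both displayed limits hold verbatim once $m_\Gamma^{\CC,\omega,e_0}$ and $N_\Delta^{\omega,e_0}$ are divided by $|\omega(n_{e_0},\delta)|$; that this normalisation is the intended reading is corroborated by the later section on tropical cycle constraints, where the author explicitly reinstates the weight $|\omega(n_{e_0},\delta)|$ in front of $B_\Gamma^{K_\omega}$ to match the complex count, so your resolution of the discrepancy is the correct one rather than a gap in your argument.
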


In the general case of the enumerative problem $\mathcal{P}(L_e)$, the question remains to know whether the complex multiplicity can be recovered using the refined multiplicity and some clever function involving the constraints. Sadly, the complex multiplicity cannot be determined solely with the refined multiplicity.

\begin{rem}
It seems possible to find tropical curves satisfying constraints of the same slope, and with the same refined multiplicity but different complex multiplicities, preventing from finding a simple relation between the refined multiplicity and the complex multiplicity.
\end{rem}

\subsection{Regularity of the invariants}

For a fixed degree $\Delta$, the invariants $\mathcal{B}_\Delta^{\omega,e_0}$ and $\mathcal{B}_\Delta^{K}(L_e)$ form a large family of invariants depending on $\omega$ for the first family, and $(L_e)$ for the second The $2$-form $\omega$ varies in the space $\Lambda^2 M\otimes\RR$, while $(L_e)$ varies in $\prod_e \Lambda^{l_e}\gen{n_e}^\perp$, where $l_e=\mathrm{cork}L_e$. In this section we study the dependence of these invariants in their respective parameters.\\

The case of the invariants $\mathcal{B}_\Delta^{\omega,e_0}$ is dealt with in the following theorem. First, let us introduce the fan structure $\Omega_\Delta$ on $\Lambda^2M_\RR$, which is induced by the linear forms $\omega\mapsto\omega(\pi_V)$, for every $V\in\mathcal{V}_\Delta$. Notice that the function $\omega\mapsto K_\omega=\gen{\pi_V:\omega(\pi_V)=0}_{V\in\mathcal{V}_\Delta}$ is constant on the cones of the subdivision, and thus denoted by $K_\sigma$ for a given cone $\sigma$ of $\Omega_\Delta$. Furthermore, if $\tau\prec\sigma$ are cones, then $K_\sigma\subset K_\tau$. The top-dimensional cones are those for which $K_\sigma=\{0\}$. Remember that $\mathcal{B}_\Delta^{\omega,e_0}$ is defined whenever $\Delta\cap\{n:\iota_n\omega=0\}=\emptyset$. The set of $\omega$ not satisfying this assumption is some subfan of $\Omega_\Delta$, where one can set $\mathcal{B}_\Delta^{\omega,e_0}=0$.

\begin{theo}
\label{theorem continuity moment}
\begin{itemize}
\item The function $\omega\mapsto\mathcal{B}_\Delta^{\omega,e_0}$ is constant on the interior of the cones of $\Omega_\Delta$ where it is defined.
\item If $\tau\prec\sigma$ and $\tau$ are cones of $\Omega_\Delta$, with values $\mathcal{B}(\sigma)$ and $\mathcal{B}(\tau)$, then $\mathcal{B}(\tau)$ is the image of $\mathcal{B}(\sigma)$ under the projection $\ZZ[\Lambda^2N/K_\sigma]\rightarrow\ZZ[\Lambda^2N/K_\tau]$.
\end{itemize}
\end{theo}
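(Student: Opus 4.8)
The plan is to reduce both assertions to one elementary observation: by the definition of $\Omega_\Delta$, the sign of every linear form $\omega\mapsto\omega(\pi_V)$, $V\in\mathcal{V}_\Delta$, is constant on the relative interior of a fixed cone. This single fact governs all the data entering the refined count. On $\mathrm{int}(\sigma)$ it fixes the lattice $K_\omega=K_\sigma$; it fixes the chosen representative $\pi_V$ (the one with $\omega(\pi_V)>0$), so that each multiplicity $B_C^{K_\sigma}$ is one and the same Laurent polynomial throughout $\mathrm{int}(\sigma)$; and, most importantly, it fixes the repartition of the three sides of every wall in the $\omega$-problem. Indeed, in the local invariance relation attached to a quadrivalent vertex with legs $a_1,\dots,a_4$ (the relation displayed in the proof of Theorem \ref{theorem invariance moment problem}), every factor reads $\omega(a_i,a_j)=\omega(a_i\wedge a_j)$, and each $a_i\wedge a_j$ equals $\pi_W$ for a trivalent vertex $W\in\mathcal{V}_\Delta$ appearing in one of the three smoothings; hence every sign $\eta_C^W$ is constant on $\mathrm{int}(\sigma)$.

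For the first item, I recall that the codomain $N_\RR/\gen{\delta}\times\prod_{e\neq e_0}\RR$ of $\mathrm{ev}^{\omega,e_0}$ does not depend on $\omega$; only the map does. Fixing a point $\xi$ there and writing $\mathcal{B}_\Delta^{\omega,e_0}=\sum_{C:\,\xi\in\mathrm{ev}^{\omega,e_0}_C(C)}B_C^{K_\sigma}$ for $\xi$ generic, I would move $\omega$ along a segment inside $\mathrm{int}(\sigma)$. The summands $B_C^{K_\sigma}$ do not move, and the contributing set changes only when $\xi$ meets the image $\mathrm{ev}^{\omega,e_0}_W(W)$ of some wall $W$; at such a crossing the net change of the sum is exactly the local invariance relation of Theorem \ref{theorem invariance moment problem} at $W$, which is unchanged throughout $\mathrm{int}(\sigma)$ by the paragraph above. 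Hence the count does not jump, so $\omega\mapsto\mathcal{B}_\Delta^{\omega,e_0}$ is locally constant and therefore constant on the connected set $\mathrm{int}(\sigma)$.

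For the second item I would pass to the limit from $\mathrm{int}(\sigma)$ to $\mathrm{int}(\tau)$. First, $p_\sigma^\tau(B_C^{K_\sigma})=B_C^{K_\tau}$: a form that is positive on $\mathrm{int}(\sigma)$ is $\geq 0$ on $\overline\sigma\supset\mathrm{int}(\tau)$, so for each vertex either $\omega_\tau(\pi_V)\neq 0$, in which case its sign and hence the representative $\pi_V$ agree with those used on $\sigma$, or $\omega_\tau(\pi_V)=0$, in which case $\pi_V\in K_\tau$ and the factor $q^{\pi_V}-q^{-\pi_V}$ is killed by $p_\sigma^\tau$. Now fix a generic $\delta$ and a point $\xi$ generic for a chosen $\omega_\tau\in\mathrm{int}(\tau)$, and take $\omega\in\mathrm{int}(\sigma)$ close to $\omega_\tau$; by the first item $\mathcal{B}(\sigma)=\mathcal{B}_\Delta^{\omega,e_0}$. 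Applying $p_\sigma^\tau$ and using $p_\sigma^\tau(B_C^{K_\sigma})=B_C^{K_\tau}$, only the types $C$ with $B_C^{K_\tau}\neq 0$ survive; for these all $\omega_\tau(\pi_V)\neq 0$, so $\det\mathrm{ev}^{\omega,e_0}_C=\pm\,\omega(n_{e_0},\delta)\prod_V\omega(\pi_V)$ is nonzero and of constant sign near $\omega_\tau$. Thus $\mathrm{ev}^{\omega,e_0}_C$ stays invertible and varies continuously, so for $\omega$ close enough to $\omega_\tau$ one has $\xi\in\mathrm{ev}^{\omega,e_0}_C(C)$ if and only if $\xi\in\mathrm{ev}^{\omega_\tau,e_0}_C(C)$. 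This identifies the surviving terms of $p_\sigma^\tau(\mathcal{B}_\Delta^{\omega,e_0})$ with those of $\mathcal{B}_\Delta^{\omega_\tau,e_0}=\mathcal{B}(\tau)$, giving $p_\sigma^\tau(\mathcal{B}(\sigma))=\mathcal{B}(\tau)$.

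I expect the delicate point to be this last limiting step: one must be certain that the combinatorial types degenerating at $\omega_\tau$ — those acquiring a vertex $W$ with $\omega_\tau(\pi_W)=0$ — are exactly the ones whose multiplicity is annihilated by $p_\sigma^\tau$, so that their possibly nonzero contributions to $\mathcal{B}(\sigma)$ vanish after projection and need not be matched with any solution of the $\omega_\tau$-problem. The constant-sign observation, together with the product formula $\pm\,\omega(n_{e_0},\delta)\prod_V\omega(\pi_V)$ for the complex multiplicity, is precisely what synchronises the two: a vertex becomes flat exactly when its form vanishes, which is exactly when the corresponding factor $q^{\pi_W}-q^{-\pi_W}$ is sent to $0$.
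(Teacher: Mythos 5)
Your proof is correct, and it reaches both conclusions by a route that differs from the paper's in one substantive respect. For the first item the paper does not invoke the wall relations at all: it fixes the generic constraint $(\mu,D)$ and applies the implicit function theorem to $\mathrm{ev}^{\omega,e_0}$, viewed as a function of $(\omega,\Gamma)$, at each solution; since the relevant partial differential has determinant $m_\Gamma^{\CC,\omega_0,e_0}\neq 0$, every contributing combinatorial type keeps contributing (and every non-contributing type with invertible evaluation keeps not contributing) for nearby $\omega$, while the types that could newly appear are exactly those with degenerate evaluation, hence with $B^{K_{\omega_0}}=0$. So the set of contributing types, not just the sum, is locally constant, and no wall is ever effectively crossed. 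You instead fix $\xi$, move $\omega$ along a segment in $\mathrm{int}(\sigma)$, allow genuine wall-crossings, and cancel each jump by the local relation from Theorem \ref{theorem invariance moment problem}, whose sign pattern you correctly observe is frozen on $\mathrm{int}(\sigma)$ because every factor $\omega(a_i,a_j)$ is $\pm\omega(\pi_W)$ for some $W\in\mathcal{V}_\Delta$. Your version re-uses already-proven invariance machinery and needs a mild genericity of the path so that crossings are simple and isolated; the paper's version is more local and avoids the wall analysis entirely, at the cost of the implicit-function-theorem setup. For the second item the two arguments essentially coincide: both pass to the limit from $\sigma$ to $\tau$, check that $p_\sigma^\tau(B_C^{K_\sigma})=B_C^{K_\tau}$ via the sign/vanishing dichotomy for $\omega_\tau(\pi_V)$, and observe that the types degenerating at $\omega_\tau$ are precisely those annihilated by the projection, so the surviving terms match up (the paper phrases this from the $\tau$ side via the same IFT persistence, you phrase it from the $\sigma$ side via the product formula for the determinant, which the paper's lemma on $m_\Gamma^{\CC,\omega,e_0}$ justifies).
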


\begin{proof}
Let $\omega_0$ be a fixed $2$-form where $\mathcal{B}_\Delta$ is defined. Let $C\in\mathcal{C}_\Delta^\mathrm{top}$ be a combinatorial type with non-zero multiplicity $m_C^{\CC,\omega_0,e_0}$ and $\Gamma_0$ be a curve belonging to the interior of $C$. We consider $\mathrm{ev}^{\omega,e_0}$ as a function of both $\omega$ and $\Gamma\in C$. This way, the partial differential $\frac{\partial \mathrm{ev}^{\omega,e_0}}{\partial \Gamma}$ at $(\omega_0,\Gamma_0)$ is the usual differential of $\mathrm{ev}^{\omega_0,e_0}$, which is itself since it is linear on $C$. Therefore, the differential is invertible for its determinant is equal to the complex multiplicity $m_{\Gamma_0}^{\CC,\omega_0,e_0}$, non-zero by assumption. Thus, the implicit function theorem ensures that at the neighborhood $(\omega_0,\Gamma_0)$, the equation $\mathrm{ev}^{\omega,e_0}(\Gamma)=\mathrm{ev}^{\omega_0,e_0}(\Gamma_0)$ in $(\omega,\Gamma)$ solves itself with $\Gamma$ being a function $\Gamma(\omega)$, with $\Gamma(\omega_0)=\Gamma_0$.

Let us use the previous fact to prove the theorem. For a generic value $(\mu,D)$, at $\omega_0$, one has
$$\mathcal{B}_\Delta^{\omega_0,e_0}=\sum_{\mathrm{ev}^{\omega_0}(\Gamma)=(\mu,D)}B_\Gamma^{K_{\omega_0}}.$$
For the combinatorial types that contribute a solution in the above sum, the previous fact using implicit functions theorem shows that they still provide a solution when $\omega$ moves in the neighborhood of $\omega_0$. For the same reasons, combinatorial types where the composed evaluation map is invertible  but which do not provide a solution continue not to provide a solution. The remaining combinatorial types never provide a solution for $\mathcal{P}(\omega_0,e_0)$, but might start to do so for some $\mathcal{P}(\omega,e_0)$. However, their multiplicity $B_\Gamma^{K_{\omega_0}}$ is $0$. Moreover, the function
$$\omega\longmapsto B_\Gamma^{K_{\omega}}=\prod_V(q^{\pi_V}-q^{-\pi_V})\in\ZZ[\Lambda^2N/K_{\omega_0}],$$
is constant in a neighborhood of $\omega_0$: if the value is $0$, it stays zero since some $\pi_V$ belongs to $K_{\omega_0}$, otherwise the conditions $\omega(\pi_V)>0$ remains valid for $\omega$ in a neighborhood of $\omega_0$. In particular, for $\omega$ belonging to the same cone of $\Omega_\Delta$, since then one has $K_\omega=K_{\omega_0}$, the multiplicity stays the same, and $\omega\mapsto\mathcal{B}_\Delta^{\omega,e_0}$ is locally constant for $\omega$ in the neighborhood of $\omega_0$ and in the same cone. The first statement follows since cones are connected.

If $\omega$ is still in the neighborhood of $\omega_0$ but does not belong to the same cone of $\Omega_\Delta$, then the same observation applies. And $\mathcal{B}_\Delta^{\omega,e_0}=\mathcal{B}_\Delta^{\omega_0,e_0}\in \ZZ[\Lambda^2N/K_{\omega_0}]$ which means that $\mathcal{B}_\Delta^{\omega_0,e_0}$ is the image of $\mathcal{B}_\Delta^{\omega,e_0}$ under the map that projects the exponents $\ZZ[\Lambda^2N/K_{\omega}]\rightarrow\ZZ[\Lambda^2N/K_{\omega_0}]$, thus proving the second statement.
\end{proof}

\begin{rem}
Clearly, the $\omega$-problem $\mathcal{P}(\omega,e_0)$ depends on $\omega$ only up to multiplication by a scalar. The fan $\Omega_\Delta$ induces a subdivision on the projective space $\mathbb{P}(\Lambda^2 M_\RR)$, which is a hyperplane arrangement. Then, $[\omega]\mapsto\mathcal{B}_\Delta^{\omega,e_0}$ is a function constant on the cells of the subdivision and satisfying the compatibility condition when going from a cell to one of its faces. On the cells where $\mathcal{B}_\Delta^{\omega,e_0}$ is not defined, the map can be extended by $0$. This choice is natural since for such $2$-forms, the composed evaluation map is never surjective, and there is an invariant which has value $0$.
\end{rem}

For the general enumerative problem $\mathcal{P}(L_e)$, the same reasoning applies. Let $(L_e^0)$ be a specific set of slopes constraints. Let $l_e=\mathrm{cork}L_e^0$, so that the tuple $(L_e)$ varies in $\mathbb{L}=\prod_e\Lambda^{l_e}\gen{n_e}^\perp$.

\begin{theo}
\label{theorem continuity general}
Let $(L_e^0)\in\mathbb{L}$ be a specific set of slopes constraints. Let $\omega$ be a $2$-form satisfying
$$m_C^{\CC,(L^0_e)}=0\Rightarrow m_C^{\omega,e_0}=0,$$
leading to an invariant $\mathcal{B}_\Delta^{K_\omega}(L_e^0)\in\ZZ[\Lambda^2 N/K_\omega]$. Then $\omega$ also satisfies the condition for $(L_e)$ in a neighborhood of $(L_e^0)$, and the function $(L_e)\mapsto\mathcal{B}_\Delta^{K_\omega}(L_e)$ is constant in a neighborhood of $(L_e^0)$.
\end{theo}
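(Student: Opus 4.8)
The plan is to follow the template of the proof of Theorem \ref{theorem continuity moment}, now treating $(L_e)\in\mathbb{L}$ as the varying parameter in place of $\omega$. There are two assertions to establish: first, that the hypothesis relating $m_C^{\CC,(L_e)}$ and $m_C^{\omega,e_0}$ survives on a neighborhood of $(L_e^0)$, so that $\mathcal{B}_\Delta^{K_\omega}(L_e)$ is actually defined there by Theorem \ref{theorem invariance general}; and second, that the resulting invariant is locally constant.

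For the first assertion I would argue by finiteness together with openness. As recorded in the remark following the definition of the complex multiplicity, $m_C^{\CC,(L_e)}$ is, up to absolute value, a multilinear form in the Plücker data $(L_e)\in\mathbb{L}=\prod_e\Lambda^{l_e}\gen{n_e}^\perp$, hence continuous in $(L_e)$. The set of top-dimensional types $C$ with $m_C^{\omega,e_0}\neq 0$ is finite, and for each such $C$ the standing hypothesis at $(L_e^0)$ forces $m_C^{\CC,(L_e^0)}\neq 0$ by contraposition. Since nonvanishing is an open condition and there are finitely many such $C$, all of them keep $m_C^{\CC,(L_e)}\neq 0$ on a common neighborhood $U$ of $(L_e^0)$; taking contrapositives once more yields exactly the required implication $m_C^{\CC,(L_e)}=0\Rightarrow m_C^{\omega,e_0}=0$ for every $(L_e)\in U$.

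For the second assertion I would run the implicit function theorem argument of Theorem \ref{theorem continuity moment}. The one new point is that the codomain $\prod_e Q_e^\RR$ of $\mathrm{ev}^{(L_e)}$ varies with $(L_e)$; I would remove this by fixing, for each $e$, a complement $W_e$ to $L_e^0$ in $N_\RR/\gen{n_e}$ and identifying $Q_e^{(L_e)}$ with $W_e$ via the projection along $L_e$, an isomorphism depending continuously on $L_e$ as long as $L_e$ stays transverse to $W_e$. With this fixed target, $\mathrm{ev}^{(L_e)}$ becomes a continuous family of maps, and on a type $C$ with $m_C^{\CC,(L_e^0)}\neq 0$ its differential in the $\Gamma$-variable is invertible with determinant of constant nonzero sign nearby. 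The implicit function theorem then shows that, for a fixed generic constraint $P$ (which remains generic for $(L_e)$ in a possibly smaller neighborhood by continuity of the wall images), every type contributing a solution at $(L_e^0)$ continues to contribute exactly one solution, while invertible but noncontributing types remain noncontributing. Since $B_C^{K_\omega}$ does not depend on $(L_e)$ at all, and the sign $\varepsilon_C$ is determined by the orientations induced by $\mathrm{ev}_C^{(L_e)}$ and $\mathrm{ev}_C^{\omega,e_0}$, the latter fixed and the former of locally constant sign, each summand $\varepsilon_C B_C^{K_\omega}$ is locally constant.

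The genuine subtlety, which I expect to be the crux, is controlling combinatorial types that are degenerate at $(L_e^0)$ but become generic for nearby $(L_e)$, since these may start producing solutions. By Lemma \ref{lemma zero mult}, a type with $m_C^{\CC,(L_e^0)}=0$ whose multiplicity is not identically zero in $(L_e)$ cannot be of type (i), (ii) or (iii) (a contracted edge or flat vertex forces noninjectivity for all $(L_e)$, and the dispatch inequalities depend only on the fixed coranks $l_e$), so it must be of type (iv): rightly dispatched but non-generic slopes. For such a $C$ the hypothesis gives $m_C^{\omega,e_0}=0$, and since $C$ has no flat vertex, the formula $m_C^{\omega,e_0}=\left|\omega(n_{e_0},\delta)\prod_V\omega(\pi_V)\right|$ forces $\omega(\pi_V)=0$ for some $V$ with $\pi_V\neq 0$, whence $\pi_V\in K_\omega$ and $B_C^{K_\omega}=0$. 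Thus any solution newly contributed by such a type enters the count with refined multiplicity $0$ and cannot alter $\mathcal{B}_\Delta^{K_\omega}(L_e)$. Combining this with the preceding paragraph, the count is constant on $U$, which is the claim.
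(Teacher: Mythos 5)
Your proposal is correct and follows essentially the same route as the paper's proof: the contrapositive-plus-continuity argument for persistence of the hypothesis, the implicit function theorem for local constancy on types with nonvanishing complex multiplicity, and the observation that types whose multiplicity vanishes at $(L_e^0)$ but which may start contributing nearby have $B_C^{K_\omega}=0$ by the standing assumption. Your extra care about the varying codomain $\prod_e Q_e^\RR$ and the explicit case analysis via Lemma \ref{lemma zero mult} are welcome elaborations of points the paper leaves implicit, but they do not change the argument.
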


\begin{proof}
The condition
$$m_C^{\CC,(L^0_e)}=0\Rightarrow m_C^{\omega,e_0}=0,$$
can be restated as
$$m_C^{\omega,e_0}\neq 0 \Rightarrow m_C^{\CC,(L^0_e)}\neq 0,$$
Furthermore, for each combinatorial type $C\in\mathcal{C}_\Delta^\mathrm{top}$, the complex multiplicity seen as a function $(L_e)\mapsto m_C^{\CC,(L_e)}$ is continuous on $\mathbb{L}$. Thus, in a neighborhood of $(L_e^0)$, one still has $m_C^{\CC,(L_e)}\neq 0$. Thus, the $2$-form $\omega$ also satisfies the condition for $(L_e)$ and thus provides an invariant in $\ZZ[\Lambda^2 N/K_\omega]$. The implicit functions theorem provides the local invariance in the same way as for the $\omega$-problem case: combinatorial types where the complex multiplicity is non-zero provide or not a solution for a specific value of the constraints, and we apply the implicit functions theorem, and combinatorial type where the complex multiplicity cancels might provide a solution for $(L_e)$ in a neighborhood of $(L_e^0)$ but their multiplicity $B_C^{K_\omega}$ is $0$ by assumption.
\end{proof}

\subsection{Tropical cycles constraints}

We now try to enlarge the scope of the refined invariance theorems by generalizing the family of enumerative problems $\mathcal{P}(L_e)$ involved in Theorem \ref{theorem invariance general}. The first direction in which one could look is to replace the constraints, which are for now affine subspaces in $N_\RR/\gen{n_e}$, by tropical cycles of the same dimension inside the same space. Let $(\Xi_e)$ be a family of tropical cycles $\Xi_e\subset N_\RR/\gen{n_e}$ such that $\sum_e\mathrm{codim}\Xi_e=|\Delta|+r-3$.

\begin{prob}
The problem referred as $\mathcal{P}(\Xi_e)$ is counting rational tropical curves of degree $\Delta$ which intersect $\Xi_e$ for every $e\in\Gamma_\infty^1$.
\end{prob}

The dimension count ensures that if the cycles $\Xi_e$ are chosen generically, there is a finite number of solutions. In this situation, general theory still provides a complex multiplicity which leads to an invariant. However, the proof of Theorem \ref{theorem invariance general} for the refined invariance fails in this context for the following reasons.

\begin{itemize}
\item First, in the case of affine constraints, each tropical curve $h:\Gamma\rightarrow N_\RR$ determines uniquely the constraints that it satisfies: $\mathcal{L}_e$ is the affine space with slope $L_e$ passing through $h(e)\in N_\RR/\gen{n_e}$. This can be reprased as the existence of the composed evaluation map $\modpar\rightarrow \prod_e Q_e^\RR$, where $Q_e^\RR$ is a moduli space for the affine subspaces $\mathcal{L}_e$ of slope $L_e$. The proof of invariance then consists in showing that this map as a well-defined degree for various multiplicities. This is not the case for tropical cycle constraints $(\Xi_e)$ since they vary in a wider moduli space: there is no composed evaluation map anymore. Thus, there is no suitable definitions of the "walls" inside $\modpar$ that could lead to an analogous proof.
\item Then, if one tries instead to mimic the invariance proof for the complex multiplicity, whether it uses tropical intersection theory or not, we have to check invariance around one new kind of wall: where the intersection point between one tropical curve $\Gamma$ solution to the enumerative problem and one of the constraints $\Xi_e$ belongs to a facet of $\Xi_e$. This intersection is not generic among the choices of $\Xi_e$, and the invariance for the complex multiplicity is provided by the balancing condition of $\Xi_e$, which would express as $\rho_1+\rho_2+\rho_3=0$ for the Pl\"ucker vectors of the adjacent facets of $\Xi_e$. However, at this wall, two solutions on one side of the wall become one on the other side, but the combinatorial types of the solutions are the same. This prevents any local invariance using a multiplicity solely depending on the combinatorial type of the curve, as is $B_\Gamma^K$.
\item Despite the impossibility of the multiplicity $B_\Gamma^K$ to provide a local refined invariant, it might exists a clever way to find another refined multiplicity leading to an invariant. However, such a multiplicity would not have the close relationship that $B_\Gamma^0$ shares with the count of real curves tropicalizing to $\Gamma$ through the correspondence theorem.
\end{itemize}

Nevertheless, we can prove refined invariance in the following particular case. Let $\Xi_{e_0}\subset N_\RR/\gen{e_0}$ be a tropical cycle of dimension $1$, \textit{i.e.} a tropical curve, and $\omega\in\Lambda^2M$ be a $2$-form. We consider the following variant of the $\omega$-problem.

\begin{prob}
How many rational tropical curves $h:\Gamma\rightarrow N_\RR$ of degree $\Delta$ have fixed family of moments $\mu_\omega$ and satisfy $h(e_0)\in\Xi_{e_0}$ ?
\end{prob}

We count the solutions to the problem with multiplicity $|\omega(n_{e_0},\delta)|B_\Gamma^{K_\omega}$, where $\delta$ is the slope of $\Xi_{e_0}$ at the intersection point with $\Gamma$:
$$\mathcal{B}_\Delta^\omega(\mu_\omega,\Xi_{e_0})=\sum_{\substack{\mathrm{\Gamma}=\mu_\omega \\ h(e_0)\in\Xi_{e_0} }} |\omega(n_{e_0},\delta)|B_\Gamma^{K_\omega}.$$
We have the following invariance result.

\begin{theo}
For a generic choice of $(\mu_\omega,\Xi_{e_0})$, the count does not depend on the choice of $\mu_\omega$ nor the choice of $\Xi_{e_0}$. It only depends on $\omega$ and the degree of $\Xi_{e_0}$. Moreover, its value is
$$\mathcal{B}_\Delta^\omega(\Xi_{e_0})=(\omega\cdot\Xi_{e_0})\mathcal{B}_\Delta^{\omega,e_0},$$
where $\omega\cdot\Xi_{e_0}$ is the intersection index between $\Xi_{e_0}$ and an affine hyperplane of slope $H_{e_0}$.
\end{theo}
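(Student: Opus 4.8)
The plan is to reduce the count to the plain $\omega$-problem $\mathcal{P}(\omega,e_0)$ by slicing the constraint $\Xi_{e_0}$ edge by edge, and then to reassemble the pieces through the tropical intersection index. The key starting remark is that prescribing the whole family of moments $\mu_\omega$ already confines the image $h(e_0)$ of the distinguished end to the affine hyperplane $\mathcal{H}_{e_0}\subset N_\RR/\gen{n_{e_0}}$ of slope $H_{e_0}=\ker\iota_{n_{e_0}}\omega$, because $\mu_\omega(e_0)=\omega(n_{e_0},h(e_0))$ is fixed. Hence the condition $h(e_0)\in\Xi_{e_0}$ becomes $h(e_0)\in\mathcal{H}_{e_0}\cap\Xi_{e_0}$. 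A dimension count $(|\Delta|-1)+(r-2)=|\Delta|+r-3=\dim\modpar$ shows the problem is genuinely $0$-dimensional, and I would first prove, exactly as in the corresponding proposition for $\mathcal{P}(\omega,e_0)$, that for a generic pair $(\mu_\omega,\Xi_{e_0})$ the set $\mathcal{H}_{e_0}\cap\Xi_{e_0}$ is finite, transverse, avoids the vertices of $\Xi_{e_0}$, and is realized only by trivalent curves; this amounts to discarding the nowhere-dense images of the non-injective cones and of the cones sending $h(e_0)$ onto a vertex of $\Xi_{e_0}$.

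Next I would perform the local reduction. Fix $P\in\mathcal{H}_{e_0}\cap\Xi_{e_0}$ in the relative interior of an edge $E$ of weight $w_E$ and primitive direction $v$. Near $P$ the constraint $h(e_0)\in\Xi_{e_0}$ coincides with $h(e_0)\in D$, where $D$ is the line through $P$ of slope $v$. Together with the prescribed moments this is exactly an instance of $\mathcal{P}(\omega,e_0)$: the fixed moment $\mu_\omega(e_0)$ and the line $D$ cut out the single point $P=\mathcal{H}_{e_0}\cap D$. Theorem \ref{theorem invariance moment problem} then gives
$$\sum_{\substack{\mathrm{mom}(\Gamma,h)=\mu_\omega\\ h(e_0)=P}} B^{K_\omega}_\Gamma=\mathcal{B}_\Delta^{\omega,e_0},$$
a value that is in particular independent of the slope $v$ of $E$.

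Finally I would reassemble the contributions. Writing $\delta=w_E v$ for the weighted slope of the edge through $P$, the multiplicity attached to each solution is $|\omega(n_{e_0},\delta)|\,B^{K_\omega}_\Gamma$, and grouping the solutions by their intersection point $P$ yields
$$\mathcal{B}_\Delta^\omega(\mu_\omega,\Xi_{e_0})=\sum_{P\in\mathcal{H}_{e_0}\cap\Xi_{e_0}}|\omega(n_{e_0},\delta)|\sum_{h(e_0)=P}B^{K_\omega}_\Gamma=\Big(\sum_{P}|\omega(n_{e_0},\delta)|\Big)\mathcal{B}_\Delta^{\omega,e_0}.$$
Since $|\omega(n_{e_0},\delta)|=w_E\,|\omega(n_{e_0},v)|$ is precisely the local stable intersection multiplicity of $\mathcal{H}_{e_0}$ with $\Xi_{e_0}$ at $P$, the sum over $P$ is the global intersection index $\omega\cdot\Xi_{e_0}$, giving the announced formula $\mathcal{B}_\Delta^\omega(\Xi_{e_0})=(\omega\cdot\Xi_{e_0})\,\mathcal{B}_\Delta^{\omega,e_0}$. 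The stated invariance is then automatic: $\mathcal{B}_\Delta^{\omega,e_0}$ is a fixed invariant by Theorem \ref{theorem invariance moment problem}, while the tropical intersection number $\omega\cdot\Xi_{e_0}$ depends only on $\omega$ and the degree (recession fan with weights) of $\Xi_{e_0}$, and not on $\mu_\omega$ nor on the position of $\Xi_{e_0}$.

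I expect the main obstacle to be the weight bookkeeping in the last step: one must check that the factor $|\omega(n_{e_0},\delta)|$ truly equals the full local tropical intersection multiplicity, edge weight included, and with the correct primitivity normalization of the functional $x\mapsto\omega(n_{e_0},x)$ cutting out $\mathcal{H}_{e_0}$. One must also verify that edges parallel to $\mathcal{H}_{e_0}$, on which $\omega(n_{e_0},\delta)=0$, together with the non-generic intersections at vertices of $\Xi_{e_0}$, are harmlessly excluded; consistently, such edges contribute a vanishing factor.
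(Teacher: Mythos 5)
Your proposal is correct and follows essentially the same route as the paper: the prescribed moment $\mu_\omega(e_0)$ confines $h(e_0)$ to $\mathcal{H}_{e_0}$, each point of $\mathcal{H}_{e_0}\cap\Xi_{e_0}$ yields a local instance of $\mathcal{P}(\omega,e_0)$ contributing $|\omega(n_{e_0},\delta)|\,\mathcal{B}_\Delta^{\omega,e_0}$ by Theorem \ref{theorem invariance moment problem}, and summing over the intersection points gives the intersection index. The paper compresses all of this into one sentence, so your explicit treatment of the genericity, the edge-weight bookkeeping, and the degenerate cases (edges parallel to $\mathcal{H}_{e_0}$, intersections at vertices of $\Xi_{e_0}$) is a faithful, more careful rendering of the same argument.
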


\begin{proof}
The proof is immediate because due to Theorem \ref{theorem invariance moment problem}, for each intersection point between $\mathcal{H}_{e_0}$ and $\Xi_{e_0}$, the refined count is $|\omega(n_{e_0},\delta)|\mathcal{B}_\Delta^{\omega,e_0}$, the result follows by definition of the intersection index.
\end{proof}

\subsection{Constraints on marked points}

The other main direction to generalize the refined invariance theorems by staying in the realm of cycles constraints would be to allow the constraints to live in the main strata $N_\RR$. This would mean to allow unbounded ends with slope $0$ and impose cycle constraints on them.

The first problem that occurs is that the $\omega$-problem, which is at the core of the proof of invariance, does not allow unbounded ends to have slope $0$. If we are to go by the same proof as for Theorem \ref{theorem invariance general}, the first step is then to find an suitable enumerative problem that does so. For instance, the following variant of the $\omega$-problem. The idea is to add marked points that do not add any constraints for the curves.

Let $\varphi_1,\dots,\varphi_p\in M$ be $p$ non-zero linear forms, and let $\mathcal{H}_i\subset N_\RR$ be affine hyperplanes with respective slope $\ker\varphi_i$. Let as usual be $\omega$ a $2$-form, $\mu\in\mathfrak{M}$ a family of moments and $D\subset N_\RR/\gen{e_0}$, all chosen generically. We consider the following enumerative problem $\mathcal{P}\left(\omega,e_0,(\varphi_i)\right)$. We consider curves of degree $\Delta\sqcup\{0^p\}$, which means that curves have now $p$ new unbounded ends of slope $0$, which are called \textit{marked points} and are denoted by $i\in[\![1;p]\!]$.

\begin{prob}
\label{moment problem enhanced} $\mathcal{P}\left(\omega,e_0,(\varphi_i)\right)$
How many parametrized rational tropical curves $h:\Gamma\rightarrow N_\RR$ of degree $\Delta\sqcup\{0^p\}$ satisfy $\mathrm{mom}(\Gamma)=\mu$, $h(e_0)\in D$, and $h(i)\in\mathcal{H}_i$ ?
\end{prob}

\begin{rem}
\label{remark solutions moment problem enhanced}
The new additional conditions are kind of void for the following reason. Forgetting the marked points, each solution of $\mathcal{P}\left(\omega,e_0,(\varphi_i)\right)$ gives a solution of the usual $\omega$-problem $\mathcal{P}(\omega,e_0)$. Conversely, every solution to $\mathcal{P}\left(\omega,e_0,(\varphi_i)\right)$ is obtained from a solution to the standard $\omega$-problem $\mathcal{P}(\omega,e_0)$ by choosing intersection points with the hyperplanes $\mathcal{H}_i$. Such points always exist. Therefore, the solutions are morally the same solutions as for the $\omega$-problem. The new marked points on the curve just have to be chosen at the intersection points between the solutions to the $\omega$-problem and $\mathcal{H}_i$.
\end{rem}

Standard intersection theory ensures that the count of solutions with complex multiplicity yields an invariant. Using the algorithm to compute the complex multiplicity of a solution $h:\Gamma\rightarrow N_\RR$, one sees that the vertex corresponding to the marked point $i$ just contributes a factor $|\varphi_i(\delta_i^\Gamma)|$, where $\delta_i^\Gamma$ is the slope of $\Gamma$ at the marked point. Hence, the complex multiplicity tales the following form:
$$m_\Gamma^{\CC,\omega,e_0,(\varphi_i)}=\prod_i|\varphi_i(\delta_i^\Gamma)|\times m_\Gamma^{\CC,\omega,e_0}.$$
Using Remark \ref{remark solutions moment problem enhanced}, one obtains that by adding all the multiplicities over the curves solutions to problem \ref{moment problem enhanced},
$$\sum_{\Gamma}m_\Gamma^{\CC,\omega,e_0,(\varphi_i)}=\left(\prod_i\varphi_i\cdot\Delta\right) N_\Delta^{\CC,\omega,e_0},$$
where $\varphi_i\cdot\Delta=\sum_{n\in\Delta:\varphi_i(n)>0}\varphi_i(n)$ denotes the intersection index between a curve of degree $\Delta$ and a hyperplane of slope $\ker\varphi_i$. The combinatorial types where $\varphi_i(\delta_i^C)$ have thus zero complex multiplicity. They were previously part of the fourth type. They do not occur if $(\varphi_i)$ are chosen generically regarding $\Delta$.

We now get to the refined count of this enhanced $\omega$-problem.

\begin{prop}
The count of solutions to problem $\mathcal{P}\left(\omega,e_0,(\varphi_i)\right)$ with refined multiplicity $\prod_i|\varphi_i(\delta_i^\Gamma)| B_\Gamma^{K_\omega}$ yields an invariant which is equal to $\left(\prod_i\varphi_i\cdot\Delta\right)\mathcal{B}_\Delta^{\omega,e_0}$.
\end{prop}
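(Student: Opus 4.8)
The plan is to reduce the statement to the already-established invariance of $\mathcal{B}_\Delta^{\omega,e_0}$ (Theorem \ref{theorem invariance moment problem}), carrying out the very same bookkeeping used in the complex computation recorded above but transporting the complex multiplicity $m_{\Gamma_0}^{\CC,\omega,e_0}$ of the underlying curve to its refined counterpart $B_{\Gamma_0}^{K_\omega}$. First I would make the convention precise: in the statement, $B_\Gamma^{K_\omega}$ is the refined multiplicity of the curve obtained by forgetting the $p$ marked points. Indeed, marked point $i$ sits on an edge of slope $\delta_i^\Gamma$ and produces a vertex with outgoing slopes $\delta_i^\Gamma,-\delta_i^\Gamma,0$, so its bivector vanishes; this flat vertex is excluded from $B_\Gamma^{K_\omega}$, exactly as the complex algorithm isolates its contribution into the scalar factor $|\varphi_i(\delta_i^\Gamma)|$. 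With this understanding the proposed multiplicity factors as $\prod_i |\varphi_i(\delta_i^\Gamma)|\, B_{\Gamma_0}^{K_\omega}$, where $\Gamma_0$ denotes the marked-point-forgotten curve, mirroring the factorisation $m_\Gamma^{\CC,\omega,e_0,(\varphi_i)}=\prod_i|\varphi_i(\delta_i^\Gamma)|\, m_{\Gamma_0}^{\CC,\omega,e_0}$ already established.

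Next I would invoke Remark \ref{remark solutions moment problem enhanced}: for a generic choice $(\mu,D,(\mathcal{H}_i))$, every solution $\Gamma$ of $\mathcal{P}(\omega,e_0,(\varphi_i))$ arises from a solution $\Gamma_0$ of the standard problem $\mathcal{P}(\omega,e_0)$ with the same $(\mu,D)$ by independently selecting, for each $i$, an intersection point of $\Gamma_0$ with $\mathcal{H}_i$. Grouping the refined count by the underlying curve $\Gamma_0$ and factoring out $B_{\Gamma_0}^{K_\omega}$, the independence of the $p$ placements gives
$$\sum_{\Gamma}\prod_i|\varphi_i(\delta_i^\Gamma)|\,B_\Gamma^{K_\omega}=\sum_{\Gamma_0}B_{\Gamma_0}^{K_\omega}\prod_{i=1}^p\left(\sum_{x\in\Gamma_0\cap\mathcal{H}_i}|\varphi_i(\delta_{i,x})|\right),$$
where $\delta_{i,x}$ is the slope of the edge of $\Gamma_0$ carrying the crossing point $x$.

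The key step is to identify each inner sum with the fixed integer $\varphi_i\cdot\Delta$. This is a standard tropical intersection count: the weighted number $\sum_{x\in\Gamma_0\cap\mathcal{H}_i}|\varphi_i(\delta_{i,x})|$ is the intersection number of the degree-$\Delta$ tropical curve $\Gamma_0$ with the affine hyperplane $\mathcal{H}_i$ of slope $\ker\varphi_i$. Pushing $\Gamma_0$ forward along $\varphi_i\colon N_\RR\to N_\RR/\ker\varphi_i\cong\RR$ and using the balancing condition, this number equals $\sum_{n\in\Delta:\,\varphi_i(n)>0}\varphi_i(n)=\varphi_i\cdot\Delta$, independently of $\Gamma_0$ and of the position of $\mathcal{H}_i$. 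Substituting and applying Theorem \ref{theorem invariance moment problem} yields
$$\sum_{\Gamma}\prod_i|\varphi_i(\delta_i^\Gamma)|\,B_\Gamma^{K_\omega}=\left(\prod_{i=1}^p\varphi_i\cdot\Delta\right)\sum_{\Gamma_0}B_{\Gamma_0}^{K_\omega}=\left(\prod_{i=1}^p\varphi_i\cdot\Delta\right)\mathcal{B}_\Delta^{\omega,e_0}.$$
Invariance is then immediate, since the right-hand side is a fixed multiple of the invariant $\mathcal{B}_\Delta^{\omega,e_0}$. I expect the only genuine subtlety to lie in the first paragraph: one must verify that the flat vertices created by the marked points are indeed excluded from $B_\Gamma^{K_\omega}$ (otherwise the multiplicity would vanish identically), after which the argument is a verbatim refined transcription of the complex computation already carried out in the excerpt.
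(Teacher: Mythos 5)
Your proposal is correct and follows the same route as the paper, which simply invokes Remark \ref{remark solutions moment problem enhanced} to identify solutions of $\mathcal{P}\left(\omega,e_0,(\varphi_i)\right)$ with solutions of $\mathcal{P}(\omega,e_0)$ decorated by choices of intersection points with the $\mathcal{H}_i$, and then cites a ``direct computation''. You have in fact filled in the two details the paper leaves implicit — that the flat trivalent vertices created by the marked points must be excluded from $B_\Gamma^{K_\omega}$ (so that the multiplicity factors through the marked-point-forgotten curve), and that each inner sum $\sum_{x\in\Gamma_0\cap\mathcal{H}_i}|\varphi_i(\delta_{i,x})|$ equals $\varphi_i\cdot\Delta$ by balancing — both correctly.
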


\begin{proof}
The proof uses Remark \ref{remark solutions moment problem enhanced} which describes solutions to problem \ref{moment problem enhanced} in terms of solutions to the usual moment problem. The direct computations yields the result.
\end{proof}

We can then proceed, applying the same trick as in the proof of Theorem \ref{theorem invariance general}, to get a new invariance result. Let $L_e\subset N/\gen{n_e}$ for $e\in\Gamma_\infty^1$ and $L_i\subset N_i$ for $i\in[\![1;p]\!]$ be full sublattices such that
$$\sum_{z\in\Gamma_\infty^1}\mathrm{cork}L_e + \sum_{i=1}^p \mathrm{cork}L_i=|\Delta|+r+p-3 .$$
Let $\mathcal{L}_e$, $\mathcal{L}_i$ be affine subspaces of respective slopes $L_e,L_i$. We consider the following enumerative problem.

\begin{prob}
\label{problem genral interior constraints}
How many rational curves of degree $\Delta\sqcup\{0^p\}$ satisfy $h(e)\in\mathcal{L}_e$ and $h(i)\in\mathcal{L}_i$ ?
\end{prob}

\begin{rem}
The infinite constraints associated to the $\mathcal{L}_e$ can be forgotten by taking $L_e=N/\gen{n_e}$.
\end{rem}

Standard intersection theory yields that the use of the complex multiplicity $m_C^{\CC,(L_e),(L_i)}$ leads to an invariant. Meanwhile, the use of refined multiplicity gives an invariant as follows. Let $\omega,\varphi_i$ be such that
$$m_C^{\CC,(L_e),(L_i)}=0\Rightarrow m_C^{\CC,\omega,e_0,(\varphi_i)}=0.$$

\begin{theo}
There exists some signs $\varepsilon_\Gamma$ such that the refined count of solutions to problem \ref{problem genral interior constraints} using multiplicity $\varepsilon_C\prod_i|\varphi_i(\delta_i^C)| B_C^{K_\omega}$ leads to an invariant.
\end{theo}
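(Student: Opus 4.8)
The plan is to mirror the proof of Theorem~\ref{theorem invariance general}, exploiting the auxiliary enumerative problem $\mathcal{P}\left(\omega,e_0,(\varphi_i)\right)$ in the same way that the plain $\omega$-problem $\mathcal{P}(\omega,e_0)$ was used before. The key observation is that $\mathcal{P}\left(\omega,e_0,(\varphi_i)\right)$ and $\mathcal{P}(L_e),(L_i)$ are both realized as degree maps of composed evaluation maps on the \emph{same} moduli space $\mathcal{M}_0(\Delta\sqcup\{0^p\},N_\RR)$, and therefore share exactly the same walls, namely combinatorial types with a single quadrivalent vertex. Since the refined multiplicity $\prod_i|\varphi_i(\delta_i^\Gamma)|B_\Gamma^{K_\omega}$ is already known (by the Proposition immediately preceding this statement) to lead to an invariant for $\mathcal{P}\left(\omega,e_0,(\varphi_i)\right)$, the whole machinery of the ``sign-transport'' trick applies.

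\textbf{Key steps.}

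First I would fix, once and for all, an orientation on the codomain of each composed evaluation map. Let $\ori(C)$ denote the orientation of a top-dimensional cone $C\in\mathcal{C}_{\Delta\sqcup\{0^p\}}^\mathrm{top}$ induced by $\mathrm{ev}^{(L_e),(L_i)}_C$, and let $\ori_\omega(C)$ denote the orientation induced by $\mathrm{ev}^{\omega,e_0,(\varphi_i)}_C$ (when the relevant restriction is invertible; otherwise the choice is irrelevant, as the corresponding multiplicity vanishes). Second, exactly as in the second proof of Theorem~\ref{theorem invariance general}, I would package the known invariance as the statement that the chain
$$\sum_{C}\ \prod_i|\varphi_i(\delta_i^C)|\,B_C^{K_\omega}\,(C,\ori_\omega(C))$$
is a cycle in the star complex of $\mathcal{M}_0(\Delta\sqcup\{0^p\},N_\RR)$. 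Twisting the orientations from $\ori_\omega$ to $\ori$ introduces the sign $\varepsilon_C=\frac{\ori_\omega(C)}{\ori(C)}$, and the twisted chain
$$\sum_{C}\ \varepsilon_C\,\prod_i|\varphi_i(\delta_i^C)|\,B_C^{K_\omega}\,(C,\ori(C))$$
remains a cycle. Third, I would invoke the compatibility hypothesis $m_C^{\CC,(L_e),(L_i)}=0\Rightarrow m_C^{\CC,\omega,e_0,(\varphi_i)}=0$, which guarantees that whenever $m_C^{\CC,(L_e),(L_i)}=0$ the twisted multiplicity $\varepsilon_C\prod_i|\varphi_i(\delta_i^C)|B_C^{K_\omega}$ also vanishes. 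By the cycle criterion established in the second proof of Theorem~\ref{theorem invariance general} (a chain that is a cycle and whose support avoids the combinatorial types with vanishing complex multiplicity yields an invariant for the problem $\mathcal{P}(L_e),(L_i)$), this produces the desired refined invariant.

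\textbf{Main obstacle.}

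The only genuinely new point compared with Theorem~\ref{theorem invariance general} is verifying that the multiplicity factor $\prod_i|\varphi_i(\delta_i^C)|$, which is attached to the marked-point ends of slope $0$, behaves correctly across the walls; that is, that it does not spoil the local dependence relations coming from the quadrivalent vertices. The hard part will therefore be confirming that at each wall the extra factors $\prod_i|\varphi_i(\delta_i^C)|$ are \emph{locally constant}, so that they can be pulled out of the three-term relation rather than interfering with it. This holds because the slope $\delta_i^C$ of the curve at each marked point $i$ is locally constant across a wall whose quadrivalent vertex does not involve the edge carrying $i$ --- and when the marked edge does meet the quadrivalent vertex, the combinatorial type lies among those with vanishing complex multiplicity (the ``fourth type'' described after Remark~\ref{remark solutions moment problem enhanced}) and hence drops out. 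Once this local constancy is checked, the argument reduces verbatim to the cycle computation of the second proof of Theorem~\ref{theorem invariance general}, and the invariant $\left(\prod_i\varphi_i\cdot\Delta\right)$-comparison with $\mathcal{B}_\Delta^{K_\omega}(L_e)$ follows by the same degree-counting bookkeeping.
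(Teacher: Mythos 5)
Your proposal is correct and follows essentially the same route as the paper, which simply states that the proof is identical to that of Theorem \ref{theorem invariance general}: you transport the invariance of the multiplicity $\prod_i|\varphi_i(\delta_i^C)|B_C^{K_\omega}$ for the enhanced $\omega$-problem to the problem $\mathcal{P}\left((L_e),(L_i)\right)$ via the orientation-comparison signs $\varepsilon_C=\ori_\omega(C)/\ori(C)$ and the cycle formulation in the star complex. Your worry about the local constancy of $\prod_i|\varphi_i(\delta_i^C)|$ across walls is already absorbed by the preceding proposition, which certifies the three-term wall relations for the full product multiplicity as a function of the combinatorial type, so no separate verification is needed.
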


\begin{proof}
The proof is identical to the proof of Theorem \ref{theorem invariance general}.
\end{proof}

\begin{rem}
One gets many new invariants depending on the choice of the linear forms $\varphi_i$. Clearly, the invariants are $1$-homogeneous in the linear forms $\varphi_i$. Yet, it would be interesting to study the various obtained invariants.
\end{rem}

As before, there is a subtle condition for the existence of refined invariants, which is the existence of $\omega,(\varphi_i)$. To see that this generalization is still of interest, notice that the class of enumerative problems for which constraints on the unbounded ends and marked points are of codimension at least $1$ provide a family of examples for which a generic choice of $\omega$ and $\varphi_i$ satisfy the assumption, leading to refined invariants in $\ZZ[\Lambda^2 N]$.

As in the case of tropical cycle constraints, the appearance of factors $\prod_i|\varphi_i(\delta_i^C)|$ makes the new multiplicity lose its relationship to the real curves tropicalizing to the tropical curve.

\section{Examples of refined invariants}

We now give a few examples, limited for the two following computational technicalities: first, we do not have yet a way of solving efficiently either of the enumerative problems $\mathcal{P}(\omega,e_0)$ and $\mathcal{P}(L_e)$, secondly, in the case of $\mathcal{P}(L_e)$, the finding of a $2$-form $\omega$ might be difficult.

\begin{expl}
First, let assume that $\omega=\varphi\wedge\psi$ is a generic $2$-form of rank $1$, where $\varphi,\psi\in M$. Then, the $2$-form has a $(r-2)$-dimensional kernel $\ker\omega=\ker\varphi\cap\ker\psi$ which is contained in every $\ker\iota_{n_e}\omega$. Therefore, the translate of a curve $h:\Gamma\rightarrow N_\RR$ by a vector in $\ker\omega$ has the same moments. Hence, solving the $\omega$-problem in $N_\RR$ can be done by solving the $\omega$-problem in $N_\RR/\ker\omega$ and then lifting the solutions to $N_\RR$: a solution $h:\Gamma\rightarrow N_\RR$ to $\mathcal{P}(\omega,e_0)$ gives a solution to $\mathcal{P}(\overline{\omega})$ to the $\omega$-problem in $N_\RR/\ker\omega$ by composing with the quotient map $N_\RR\rightarrow N_\RR/\ker\omega$, which is endowed with the quotient form $\overline{\omega}$. Conversely, any solution in the quotient can uniquely be lifted to a solution in $N_\RR$ such that $h(e_0)$ belongs to the line $D$ chosen for the enumerative problem. Therefore, we reduce to the planar case and the computation can be done using the recursive formula from \cite{blomme2019caporaso}. In particular, as the solution are determined uniquely up to translation by the solution in the quotient, for such $2$-forms, the value of $\mathcal{B}_\Delta^{\omega,e_0}$ does not depend on the choice of $e_0$.
\end{expl}

In all the following examples, we now consider tropical lines in $\RR^4$, \textit{i.e.} curves of degree $\Delta=\{e_1,e_2,e_3,e_4,e_5\}$, where $(e_1,e_2,e_3,e_4)$ is the canonical basis of $\ZZ^4$ and $e_5=-\sum_1^4 e_i$. Up to a change of basis of the lattice $N=\ZZ^4$ and maybe a change of lattice, this includes all cases of degree $\Delta$ of cardinal $5$ which span a $4$-dimensional space. The moduli space $\mathcal{M}(\Delta,\RR^4)$ is of dimension $6$, in bijection with $\mathcal{M}_{0,5}\times\RR^4$. The moduli space $\mathcal{M}_{0,5}$ is the cone over the Petersen graph. It has $15$ cones of dimension $2$ corresponding to the $15$ top-dimensional combinatorial types, and $10$ rays, which are the walls of our enumerative problem. This small number of possibilities allows one to solve the enumerative problems by brute force using a computer, providing us with the following examples, which we use to illustrate the various results of the paper more than to provide useful values. We identify $\ZZ[\Lambda^2\ZZ^4]$ with $\ZZ[q_{ij}^{\pm 1}]_{1\leqslant i<j\leqslant 4}$ through its canonical basis: $q_{ij}=q^{e_i\wedge e_j}$.

\begin{expl}
Let us start with a simple example. We consider the $\omega$-problem with the generic $2$-form whose matrix in the canonical basis is given by
$$[\omega_1]=\begin{pmatrix}
0 & -68 & -53 &  86 \\
68 & 0 & 46 & -43 \\
53 & -46 & 0 & 30 \\
-86 & 43 & -30 & 0 \\
\end{pmatrix}.$$
The genericity means that the $2$-form does not cancel any $\pi_V$ for $V\in\mathcal{V}_\Delta$. The $\omega$-problem $\mathcal{P}(\omega_1,e_1)$ with chosen unbounded end directed by $e_1$ gives
\begin{align*}
\mathcal{B}_\Delta^{\omega_1,e_1} = &
-q_{12}q_{13}q_{14}q_{23}q_{24}q_{34} + \frac{1}{q_{12}q_{13}q_{14}q_{23}q_{24}q_{34}} && + 2\frac{q_{12}q_{13}q_{14}q_{23}q_{24}}{q_{34}}- 2\frac{q_{34}}{q_{12}q_{13}q_{14}q_{23}q_{24}} \\
& - \frac{q_{12}q_{13}q_{14}q_{23}}{q_{24}q_{34}} + \frac{q_{24}q_{34}}{q_{12}q_{13}q_{14}q_{23}}
&& + \frac{q_{12}q_{13}q_{14}q_{34}}{q_{23}q_{24}} - \frac{q_{23}q_{24}}{q_{12}q_{13}q_{14}q_{34}} \\
& - \frac{q_{12}q_{13}}{q_{14}q_{23}q_{24}q_{34}} + \frac{q_{14}q_{23}q_{24}q_{34}}{q_{12}q_{13}}
&& - \frac{q_{12}q_{14}q_{24}q_{34}}{q_{13}q_{23}} + \frac{q_{13}q_{23}}{q_{12}q_{14}q_{24}q_{34}} \\
& + \frac{q_{12}q_{34}}{q_{13}q_{14}q_{23}q_{24}} - \frac{q_{13}q_{14}q_{23}q_{24}}{q_{12}q_{34}}. & & \\
\end{align*}
\end{expl}

\begin{expl}
Similarly, we now consider the $2$-form whose matrix is
$$[\omega_2]=\begin{pmatrix}
0 & 94 & 23 &  21 \\
-94 & 0 & 86 & 11 \\
23 & -86 & 0 & -27 \\
-21 & -11 & 27 & 0 \\
\end{pmatrix}.$$
For different choices of the unbounded end with the plane constraint, one has
\begin{align*}
\mathcal{B}_\Delta^{\omega_2,e_1}=\mathcal{B}_\Delta^{\omega_2,e_3}= &
\frac{q_{12}q_{13}q_{14}q_{23}q_{24}}{q_{34}}- \frac{q_{34}}{q_{12}q_{13}q_{14}q_{23}q_{24}} &&
- \frac{q_{12}q_{13}q_{23}}{q_{14}q_{24}q_{34}} + \frac{q_{14}q_{24}q_{34}}{q_{12}q_{13}q_{23}} \\
& - \frac{q_{12}q_{13}q_{14}}{q_{23}q_{24}q_{34}} + \frac{q_{23}q_{24}q_{34}}{q_{12}q_{13}q_{14}}
&&
- \frac{q_{12}q_{14}q_{24}q_{34}}{q_{13}q_{23}} + \frac{q_{13}q_{23}}{q_{12}q_{14}q_{24}q_{34}} \\
& + \frac{q_{12}q_{13}q_{14}q_{34}}{q_{23}q_{24}}- \frac{q_{23}q_{24}}{q_{12}q_{13}q_{14}q_{34}}
&&
+ \frac{q_{12}}{q_{13}q_{14}q_{23}q_{24}q_{34}} - \frac{q_{13}q_{14}q_{23}q_{24}q_{34}}{q_{12}} .\\
\end{align*}

\begin{align*}
\mathcal{B}_\Delta^{\omega_2,e_2}= &
\frac{q_{12}q_{13}q_{14}q_{23}q_{24}}{q_{34}}- \frac{q_{34}}{q_{12}q_{13}q_{14}q_{23}q_{24}} &&
- \frac{q_{12}q_{13}q_{23}}{q_{14}q_{24}q_{34}} + \frac{q_{14}q_{24}q_{34}}{q_{12}q_{13}q_{23}} \\
& - \frac{q_{12}q_{13}q_{14}}{q_{23}q_{24}q_{34}} + \frac{q_{23}q_{24}q_{34}}{q_{12}q_{13}q_{14}}
&&
- \frac{q_{12}q_{14}q_{24}q_{34}}{q_{13}q_{23}} + \frac{q_{13}q_{23}}{q_{12}q_{14}q_{24}q_{34}}  \\
& + \frac{q_{12}q_{13}}{q_{14}q_{23}q_{24}q_{34}}- \frac{q_{14}q_{23}q_{24}q_{34}}{q_{12}q_{13}}
&&
+ \frac{q_{12}q_{14}q_{34}}{q_{13}q_{23}q_{24}} - \frac{q_{13}q_{23}q_{24}}{q_{12}q_{14}q_{34}} .\\
\end{align*}

In particular, we see that in general the value of the invariant $\mathcal{B}_\Delta^{\omega,e_0}$ depends on the choice of $e_0$ since in that case the invariants $\mathcal{B}_\Delta^{\omega_2,e_1}$ and $\mathcal{B}_\Delta^{\omega_2,e_2}$ are different: the last row of their expression are differents.
\end{expl}

\begin{expl}
We now look at the $2$-form $\omega_0$ given by
$$[\omega_0]=\begin{pmatrix}
0 & 0 & 86 &  -20 \\
0 & 0 & -4 & -22 \\
-86 & 4 & 0 & -56 \\
20 & 22 & 56 & 0 \\
\end{pmatrix}.$$
This $2$-form is non-generic since it has value zero at the vertex formed by the meeting of the unbounded ends directed by $e_1$ and $e_2$. The invariant lives in $\ZZ[\Lambda^2\ZZ^4/\gen{e_1\wedge e_2}]$, which is identified with $\ZZ[q_{ij}]_{(i,j)\neq(1,2)}$. One gets
\begin{align*}
\mathcal{B}_\Delta^{\omega_0,e_1}= &
q_{13}q_{14}q_{23}q_{24}q_{34} - \frac{1}{q_{13}q_{14}q_{23}q_{24}q_{34}}
&&
-\frac{q_{13}q_{14}q_{34}}{q_{23}q_{24}} + \frac{q_{23}q_{24}}{q_{13}q_{14}q_{34}} \\ 
& + \frac{q_{14}q_{34}}{q_{13}q_{23}q_{24}} - \frac{q_{13}q_{23}q_{24}}{q_{14}q_{34}}
&&
+ \frac{q_{13}}{q_{14}q_{23}q_{24}q_{34}} - \frac{q_{14}q_{23}q_{24}q_{34}}{q_{13}}  ,\\
\end{align*}
There are two ways to deform slightly the $2$-form $\omega_0$, getting the following $2$-forms:
$$[\omega_+]=\begin{pmatrix}
0 & 1 & 86 &  -20 \\
-1 & 0 & -4 & -22 \\
-86 & 4 & 0 & -56 \\
20 & 22 & 56 & 0 \\
\end{pmatrix} \text{ and }[\omega_-]=\begin{pmatrix}
0 & -1 & 86 &  -20 \\
1 & 0 & -4 & -22 \\
-86 & 4 & 0 & -56 \\
20 & 22 & 56 & 0 \\
\end{pmatrix}.$$
In fact, $\omega_0$ belongs to a facet of the fan $\Omega_\Delta$, and $\omega_\pm$ belongs to each of the adjacent maximal cones. They have the following invariants:
\begin{align*}
\mathcal{B}_\Delta^{\omega_+,e_1}= &
\frac{q_{13}q_{14}q_{23}q_{24}q_{34}}{q_{12}} - \frac{q_{12}}{q_{13}q_{14}q_{23}q_{24}q_{34}}
&&
-\frac{q_{12}q_{13}q_{14}q_{34}}{q_{23}q_{24}} + \frac{q_{23}q_{24}}{q_{12}q_{13}q_{14}q_{34}} \\ 
& + \frac{q_{12}q_{14}q_{34}}{q_{13}q_{23}q_{24}} - \frac{q_{13}q_{23}q_{24}}{q_{12}q_{14}q_{34}}
&&
+ \frac{q_{12}q_{13}}{q_{14}q_{23}q_{24}q_{34}} - \frac{q_{14}q_{23}q_{24}q_{34}}{q_{12}q_{13}}  ,\\
\end{align*}

\begin{align*}
\mathcal{B}_\Delta^{\omega_-,e_1}=&
q_{12}q_{13}q_{14}q_{23}q_{24}q_{34} - \frac{1}{q_{12}q_{13}q_{14}q_{23}q_{24}q_{34}}
&&
- \frac{q_{12}q_{13}q_{14}q_{34}}{q_{23}q_{24}}+ \frac{q_{23}q_{24}}{q_{12}q_{13}q_{14}q_{34}} \\
&
+ \frac{q_{12}q_{14}q_{34}}{q_{13}q_{23}q_{24}} - \frac{q_{13}q_{23}q_{24}}{q_{12}q_{14}q_{34}}
&&
+ \frac{q_{12}q_{13}}{q_{14}q_{23}q_{24}q_{34}}- \frac{q_{14}q_{23}q_{24}q_{34}}{q_{12}q_{13}} \\
&
- \frac{q_{12}q_{14}q_{24}q_{34}}{q_{13}q_{23}} + \frac{q_{13}q_{23}}{q_{12}q_{14}q_{24}q_{34}}
&&
- \frac{q_{12}q_{13}q_{23}}{q_{14}q_{24}q_{34}} + \frac{q_{14}q_{24}q_{34}}{q_{12}q_{13}q_{23}}.\\
\end{align*}

In particular, one can see that although $\mathcal{B}_\Delta^{\omega_+,e_1}\neq\mathcal{B}_\Delta^{\omega_-,e_1}$, they both project to $\mathcal{B}_\Delta^{\omega_0,e_1}$ by the quotient map $\ZZ[\Lambda^2\ZZ^4]\rightarrow\ZZ[\Lambda^2\ZZ^4/\gen{e_1\wedge e_2}]$, which in our notations identifies with the map $\ZZ[q_{ij}]\xrightarrow{q_{12}=1}\ZZ[q_{ij}]_{(i,j)\neq(1,2)}$: for $\mathcal{B}_\Delta^{\omega_+,e_1}$ it is immediate, and for $\mathcal{B}_\Delta^{\omega_-,e_1}$, the first two rows give the value of $\mathcal{B}_\Delta^{\omega_0,e_1}$ while the last rows cancels when $q_{12}$ goes to $1$.
\end{expl}

\begin{expl}
We now give an example of refined invariant in the general case. We consider the enumerative problem with the following constraints:
\begin{itemize}
\item The unbounded end directed by $e_1$ lies in affine subspace of slope
$$L_{e_1}=\ker\begin{pmatrix}
0 & 3780 & -315 & -2543 \\
\end{pmatrix}\cap\ker\begin{pmatrix}
0 & -6958 & 7243 & 3904 \\
\end{pmatrix}.$$
\item The unbounded end directed by $e_2$ lies in an affine hyperplane of slope
$$L_{e_2}=\ker\begin{pmatrix}
-25 & 0 & -16 & -72 \\
\end{pmatrix}.$$
\item The unbounded end directed by $e_3$ lies in an affine hyperplane of slope
$$L_{e_3}=\ker\begin{pmatrix}
-4387 & 564 & 0 & 2857 \\
\end{pmatrix}.$$
\item $e_4$ lies in an affine hyperplane of slope directed by $L_{e_4}=\ker\begin{pmatrix}
-720 & -843 & -718 & 0 \\
\end{pmatrix}$.
\item $e_5$ lies in an affine hyperplane of slope directed by $L_{e_5}=\ker\begin{pmatrix}
-1091 & -562 & 653 & 1000 \\
\end{pmatrix}$.
\end{itemize}
These particular constraints are in the family of problems described in example \ref{example wide class}. Thus, a generic choice of $\omega$ satisfies the condition of Theorem \ref{theorem invariance general}. We choose
$$[\omega_3]=\begin{pmatrix}
0 & 20 & -51 & 38 \\
-20 & 0 & 89 & 13 \\
51 & 4 & 0 & -24 \\
-38 & -13 & 24 & 0 \\
\end{pmatrix}.$$
The rule of sign and computation of the invariants then gives
\begin{align*}
\mathcal{B}_\Delta^{(L_e)}= & 
\frac{q_{12}q_{13}q_{14}q_{23}q_{24}}{q_{34}}- \frac{q_{34}}{q_{12}q_{13}q_{14}q_{23}q_{24}} 
 && - \frac{q_{12}q_{13}q_{14}q_{24}q_{34}}{q_{23}} + \frac{q_{23}}{q_{12}q_{13}q_{14}q_{24}q_{34}}  \\
& + \frac{q_{12}q_{13}q_{14}q_{34}}{q_{23}q_{24}}- \frac{q_{23}q_{24}}{q_{12}q_{13}q_{14}q_{34}}
&& - \frac{q_{12}q_{13}q_{23}}{q_{14}q_{24}q_{34}} + \frac{q_{14}q_{24}q_{34}}{q_{12}q_{13}q_{23}}  \\
& - \frac{q_{12}q_{14}q_{34}}{q_{13}q_{23}q_{24}}+ \frac{q_{13}q_{23}q_{24}}{q_{12}q_{14}q_{34}}
&& + \frac{q_{12}q_{34}}{q_{13}q_{14}q_{23}q_{24}} - \frac{q_{13}q_{14}q_{23}q_{24}}{q_{12}q_{34}}  . \\
\end{align*}
\end{expl}

\begin{expl}
Last, we give an example where a generic choice of $\omega$ does not satisfy the assumption of Theorem \ref{theorem invariance general}. We consider the enumerative problem of lines passing through two points which are located on the unbounded ends directed by $e_1$ and $e_5$. Notice that by standard geometry, there is always exactly one line passing through two points. Hence, the complex multiplicity is either $0$ or $1$, and combinatorial type can provide a solution only one at a time.

The combinatorial type where some of the unbounded ends directed by $e_2$, $e_3$ and $e_4$ are adjacent to a common vertex have complex multiplicity $0$. One can check that the other combinatorial types, of the form $1i//j//k5$ have complex multiplicity $1$. This notation denotes the combinatorial type where unbounded ends directed by $e_1$ and $e_i$ are adjacent to the same vertex, as well as $e_k$ and $e_5$. In order to apply Theorem \ref{theorem invariance general}, we can choose a $2$-form $\omega$ that cancels on $e_2\wedge e_3$, $e_2\wedge e_4$ and $e_3\wedge e_4$. For instance, take
$$[\omega_4]=\begin{pmatrix}
0 & 1 & 1 & 1 \\
-1 & 0 & 0 & 0 \\
-1 & 0 & 0 & 0 \\
-1 & 0 & 0 & 0 \\
\end{pmatrix}.$$
One has $K_{\omega_4}=\gen{e_2\wedge e_3,e_2\wedge e_4,e_3\wedge e_4}$. Thus, the invariant lives in $\ZZ[\Lambda^2\ZZ^4/K_{\omega_4}]\simeq\ZZ[q_{12},q_{13},q_{14}]$. The invariance stated in Theorem \ref{theorem invariance general} means that every tropical line with non-zero complex multiplicity has the same refined multiplicity. For instance,
\begin{align*}
\mathcal{B}_\Delta^{K_{\omega_4}}=B_{12//3//45}^{K_{\omega_4}} = & (q_{12}-q_{12}^{-1})(q_{13}q_{23}-q_{13}^{-1}q_{23}^{-1})(q_{14}q_{24}q_{34}-q_{14}^{-1}q_{24}^{-1}q_{34}^{-1}) \\
& =(q_{12}-q_{12}^{-1})(q_{13}-q_{13}^{-1})(q_{14}-q_{14}^{-1})\in\ZZ[q_{12},q_{13},q_{14}].\\
\end{align*}
\begin{align*}
\mathcal{B}_\Delta^{K_{\omega_4}}=B_{13//2//45}^{K_{\omega_4}} = & (q_{13}-q_{13}^{-1})\left(\frac{q_{12}}{q_{23}}-\frac{q_{23}}{q_{12}}\right)(q_{14}q_{24}q_{34}-q_{14}^{-1}q_{24}^{-1}q_{34}^{-1}) \\
& =(q_{12}-q_{12}^{-1})(q_{13}-q_{13}^{-1})(q_{14}-q_{14}^{-1})\in\ZZ[q_{12},q_{13},q_{14}].\\
\end{align*}
Recall that the term of each product are obtained as $q^{\pi_V}-q^{-\pi_V}$ where $\omega_4(\pi_V)>0$. For instance, for the midle vertex of the combinatorial type $13//2//45$, one has $\omega_4((e_1+e_3)\wedge e_2)=1>0$, thus,
$$q^{(e_1+e_3)\wedge e_2}=q^{e_1\wedge e_2}q^{-e_2\wedge e_3}=\frac{q_{12}}{q_{23}}.$$
\end{expl}

\section*{Appendix: computation of the complex multiplicity}

We here prove that the algorithm described in section \ref{section complex multiplicity} computes the complex multiplicity. This result also appears in a paper of T.~Mandel and H.~Ruddat \cite{mandel2019tropical} in a slightly different context. We include a proof in our setting for sake of completeness.

\begin{lem}
The value of the multiple does not depend on the chosen sink.
\end{lem}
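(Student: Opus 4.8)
The plan is to reduce to moving the sink across a single edge and then conclude by connectedness. Since the underlying graph $\Gamma$ is a tree, any two candidate sinks are joined by a unique path, so it would suffice to show that the value computed with a sink $V$ agrees with the value computed with an adjacent sink $W$, where $V$ and $W$ share the edge $\gamma$. The first thing I would check is that removing $\gamma$ splits $\Gamma$ into two subtrees $\Gamma_V\ni V$ and $\Gamma_W\ni W$, and that for either choice of sink all edges inside $\Gamma_V$ are oriented toward $V$ and all edges inside $\Gamma_W$ toward $W$, so that only the orientation of $\gamma$ itself flips. This makes the polyvectors produced by the recursion on the edges adjacent to $W$ inside $\Gamma_W$, say $\rho_1,\dots,\rho_k$, and those adjacent to $V$ inside $\Gamma_V$, say $\sigma_1,\dots,\sigma_m$, literally identical for both choices of sink.

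Next I would set up the two final values. Writing $P=\rho_1\wedge\cdots\wedge\rho_k$, $Q=\sigma_1\wedge\cdots\wedge\sigma_m$, and letting $n$ be the slope of $\gamma$ oriented from $W$ to $V$, the balancing condition gives that $n$ is the sum of the incoming slopes at $W$ while $-n$ is the sum of those at $V$. With sink $V$, the recursion at $W$ pushes $\iota_n P$ along $\gamma$, so the value is $\Phi_V=(\iota_n P)\wedge Q$; with sink $W$, the recursion at $V$ pushes $\iota_{-n}Q=-\iota_n Q$ along $\gamma$, so the value is $\Phi_W=-(\iota_n Q)\wedge P$. The standing dimension hypothesis forces both to lie in $\Lambda^r M$, and matching degrees with $p=\deg P$, $q=\deg Q$ gives $(p-1)+q=r$, that is $p+q=r+1$.

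The heart of the argument would then be a one-line identity. Since $p+q=r+1>r=\operatorname{rk}M$, the product $P\wedge Q\in\Lambda^{p+q}M$ vanishes, and the antiderivation (Leibniz) rule for the interior product yields
$$0=\iota_n(P\wedge Q)=(\iota_n P)\wedge Q+(-1)^{p}\,P\wedge(\iota_n Q).$$
Hence $(\iota_n P)\wedge Q=(-1)^{p+1}P\wedge(\iota_n Q)$, and after moving $P$ past $\iota_n Q$ in the expression for $\Phi_W$ one obtains $\Phi_V=\pm\Phi_W$ for a sign depending only on $p$ and $q$. I expect the main obstacle to be purely the sign and degree bookkeeping, together with the careful verification that the two subtree recursions genuinely coincide; the real content is the vanishing $P\wedge Q=0$, which is exactly what makes the Leibniz rule collapse to the sought relation. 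Since the multiplicity is defined as the absolute value of the proportionality constant, the residual sign is immaterial and $|\Phi_V|=|\Phi_W|$ follows, giving independence of the sink.
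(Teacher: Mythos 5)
Your proof is correct and follows essentially the same route as the paper's: reduce to adjacent sinks, identify the two outputs as $(\iota_n P)\wedge Q$ and $\pm(\iota_n Q)\wedge P$, and conclude from the vanishing of $P\wedge Q\in\Lambda^{r+1}M=\{0\}$ together with the fact that $\iota_n$ is an antiderivation. You are somewhat more explicit than the paper about the sign and degree bookkeeping and about why the two subtree recursions coincide, but the key identity is the same.
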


\begin{proof}
We prove that the obtained value does not change if we replace the sink $V$ by one of its neighbors. Let $V$ be a vertex, $W$ one of its neighbors, and $E$ the edge between them, directed by $n$. Let $\rho_1,\dots,\rho_s$ be the polyvectors of the edges adjacent to $V$ different from $E$, and $\rho'_1,\dots,\rho'_{s'}$ the polyvectors associated to the edges adjacent to $W$ different from $E$. The computation leads to the two following results:
\begin{itemize}
\item If $V$ is the sink, we get
$$\rho_1\wedge\cdots\wedge\rho_s\wedge\iota_n(\rho'_1\wedge\cdots\wedge\rho'_{s'}).$$
\item If $W$ is the sink, we get
$$\iota_n(\rho_1\wedge\cdots\wedge\rho_s)\wedge\rho'_1\wedge\cdots\wedge\rho'_{s'}.$$
\end{itemize}
Therefore, the equality (up to sign) comes from the fact that $\rho_1\wedge\cdots\wedge\rho_s\wedge\rho'_1\wedge\cdots\wedge\rho'_{s'}$ is $0$ since it is in $\Lambda^{r+1}M=\{0\}$, and that $\iota_n$ is a derivation on $\Lambda^\bullet M$.
\end{proof}

\begin{theo}
\label{Theorem calcul complex multiplicity tropical curve}
The value obtained by the preceding algorithm is equal to the complex multiplicity $m_\Gamma^\CC$.
\end{theo}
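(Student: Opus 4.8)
The plan is to identify the scalar produced by the recursion with the pull-back $\mathrm{ev}_C^*\alpha_{(L_e)}$ already discussed in the remark following the definition of the complex multiplicity, where $\alpha_{(L_e)}$ is the Pl\"ucker vector of $\prod_e L_e$ sitting in $\Lambda^{|\Delta|+r-3}\big(\prod_e N/\langle n_e\rangle\big)^*$. That remark gives $m_\Gamma^\CC=\big|\langle\mathrm{ev}_C^*\alpha_{(L_e)},\mathrm{vol}_C\rangle\big|$, where $\mathrm{vol}_C$ is the determinant form of the lattice $\ZZ^{|\Delta|-3}\times N$; sink-independence being already settled by the previous lemma, it only remains to show that the algorithm computes this pairing. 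Since $\mathrm{ev}_C=(\mathrm{ev}_e)_e$ and $\alpha_{(L_e)}$ is, blockwise, the external product of the $\rho_e$ (each $\rho_e\in\Lambda^{l_e}\langle n_e\rangle^\perp$ being the Pl\"ucker vector of $\langle n_e,L_e\rangle^\perp$), one has $\mathrm{ev}_C^*\alpha_{(L_e)}=\bigwedge_{e\in\Gamma^1_\infty}\mathrm{ev}_e^*\rho_e$, a top form on the cone.

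Next I would put coordinates on $C$: the position $x\in N_\RR$ of the sink $V_0$ together with the lengths $\ell_\gamma$ of the bounded edges, oriented toward $V_0$. Reading off the differential, $\mathrm{ev}_e^*\rho_e$ is the wedge of the covectors $m\circ D\mathrm{ev}_e$ for $m$ running over a basis of $\langle n_e,L_e\rangle^\perp$, and each such covector equals $m|_{N_\RR}+\sum_{\gamma\in P(V_0,e)}m(u_\gamma)\,d\ell_\gamma$, where $u_\gamma$ is the slope of $\gamma$ and $P(V_0,e)$ the path from $V_0$ to $e$. Evaluating $\bigwedge_e\mathrm{ev}_e^*\rho_e$ against $\partial_x\wedge\bigwedge_\gamma\partial_{\ell_\gamma}$ is then a Laplace expansion: because $d\ell_\gamma\wedge d\ell_\gamma=0$, each $\partial_{\ell_\gamma}$ must be fed by the $d\ell_\gamma$-part $m(u_\gamma)$ of exactly one factor $\mathrm{ev}_e^*\rho_e$ with $e$ beyond $\gamma$. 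Summing these contributions with their signs is precisely a contraction $\iota_{u_\gamma}$ applied to the wedge of the $\rho_e$'s in the subtree beyond $\gamma$, since $\iota_{u_\gamma}(m_1\wedge\cdots\wedge m_k)=\sum_j(-1)^{j-1}m_j(u_\gamma)\,m_1\wedge\cdots\widehat{m_j}\cdots\wedge m_k$ is an antiderivation and therefore distributes the single contraction over the product.

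The core of the argument is then to check that, performing this contraction edge by edge from the leaves toward $V_0$, the antiderivation property reassembles the partial results exactly into the stated recursion. At a vertex $V$ with incoming edges carrying $\rho_1,\dots,\rho_s$ and outgoing edge $\gamma$, the wedge of the incoming polyvectors is $\rho_1\wedge\cdots\wedge\rho_s$, and balancing gives $u_\gamma=n_1+\cdots+n_s$; contracting the length $\ell_\gamma$ produces $\iota_{n_1+\cdots+n_s}(\rho_1\wedge\cdots\wedge\rho_s)$, which is the polyvector the algorithm assigns to $\gamma$. After all the $\ell_\gamma$ have been contracted, the surviving top form in $\Lambda^r M$ paired against $\partial_x$ (a basis of $N$) is the wedge $\rho_{(1)}\wedge\cdots\wedge\rho_{(t)}$ of the polyvectors of the edges incident to the sink, i.e. the algorithm's output, so its absolute value is $m_\Gamma^\CC$.

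The main obstacle is the sign bookkeeping in this reassembly: one must verify that reordering the factors $\mathrm{ev}_e^*\rho_e$ so as to group each $d\ell_\gamma$ with the subtree beyond $\gamma$, and pulling all the $d\ell_\gamma$ to the front in a consistent order, introduces only an overall sign, so that the absolute value is unaffected. This is where the tree structure is essential, as the nesting of subtrees guarantees that the contractions commute up to sign with the grouping. When $\mathrm{ev}_C$ drops rank the form $\mathrm{ev}_C^*\alpha_{(L_e)}$ vanishes and both sides are $0$, consistent with $m_\Gamma^\CC=0$; the non-trivalent case requires no change, a valence-$(s+1)$ vertex simply contributing the $s$-fold wedge $\rho_1\wedge\cdots\wedge\rho_s$ in the same Laplace expansion.
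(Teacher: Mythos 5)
Your proposal is correct and follows essentially the same route as the paper's proof: both compute $\det \mathrm{ev}^{(L_e)}_C$ by expanding along the coordinates dual to the bounded-edge lengths, recognize each such expansion as the interior product $\iota_{u_\gamma}$ applied to the wedge of the constraint polyvectors of the ends beyond $\gamma$ (via the antiderivation identity $\iota_{u}(m_1\wedge\cdots\wedge m_k)=\sum_j(-1)^{j-1}m_j(u)\,m_1\wedge\cdots\widehat{m_j}\cdots\wedge m_k$), and process the edges from the leaves toward the sink. The only difference is presentational — you organize the computation as a single pairing of the pulled-back Pl\"ucker form $\bigwedge_e\mathrm{ev}_e^*\rho_e$ against $\partial_x\wedge\bigwedge_\gamma\partial_{\ell_\gamma}$, whereas the paper runs an explicit induction on the number of vertices with block matrices and a Laplace expansion along the last column — and your acknowledged sign bookkeeping is exactly the $(-1)^\bullet$ the paper likewise leaves implicit, harmless since only the absolute value is needed.
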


\begin{proof}
We make an induction on the number of vertices of the curve $\Gamma$, and cut the branches one by one.
\begin{itemize}[label=-]
\item If the curve has just one vertex, let $\rho_i=m_{i1}\wedge\cdots\wedge m_{ir_i}$ be the polyvectors associated to the lattices $L_i$ for the $s$ unbounded edges of the curve. The evaluation matrix of $\mathrm{ev}^{(L_e)}$, denoted by $[f]$, has the following form:
$$[\mathrm{ev}^{(L_e)}]=\begin{pmatrix}
m_{11}\\
\vdots\\
m_{1r_1}
\vdots\\
m_{sr_s}\\
\end{pmatrix}.$$
Therefore, we have 
$$\mathrm{det}\ \mathrm{ev}^{(L_e)}=m_{11}\wedge\cdots\wedge m_{1r_1}\wedge\cdots\wedge m_{sr_s}=\rho_1\wedge\cdots\wedge \rho_s.$$
\item If $\Gamma$ has more than one vertex, let $V$ be a vertex adjacent to only unbounded ends associated to polyvectors $\rho_1,\dots,\rho_s$, and  one unique neighbor vertex $W$. We keep the same notations $\rho_i=m_{i1}\wedge\cdots\wedge m_{ir_i}$. We choose a basis of the cone associated to the combinatorial type of $\Gamma$ consisting of the canonical basis of $\RR_{\geqslant 0}^{m-3-\mathrm{ov}(\Gamma)}$, and the $N_\RR$ factor corresponding to the position of $W$. Then, the matrix of $\mathrm{ev}^{(L_e)}$ has the following form:
$$[\mathrm{ev}^{(L_e)}]=\begin{pmatrix}
* & \cdots & *    &   * & \cdots & * & 0 \\
\vdots & \ddots & \vdots    &   \vdots & \ddots & \vdots & \vdots \\
* & \cdots & *    &   * & \cdots & * & 0 \\

 & m_{11} &     &   0 & \cdots & 0 & \langle m_{11},n\rangle \\
  & \vdots &     &   \vdots & \ddots & \vdots & \vdots \\
  & m_{sr_s} &     &   0 & \cdots & 0 & \langle m_{sr_s},n\rangle \\
\end{pmatrix},$$
where the first columns correspond to the evaluation of the vertex $W$, the last column to the evaluation corresponding to the edge between $V$ and $W$. We make a development with respect to the last column. We get the following result:
$$\mathrm{det}\ \mathrm{ev}^{(L_e)}=\sum_{i=1}^s\sum_{j=1}^{r_i} (-1)^\bullet \langle m_{ij},n\rangle
\left|
\begin{matrix}
* & \cdots & *    &   * & \cdots & *  \\
\vdots & \ddots & \vdots    &   \vdots & \ddots & \vdots  \\
* & \cdots & *    &   * & \cdots & *  \\

 & m_{11} &     &   0 & \cdots & 0  \\
  & \vdots &     &   \vdots & \ddots & \vdots  \\
  & \hat{m}_{ij} &     &   0 & \cdots & 0  \\
  & \vdots &     &   \vdots & \ddots & \vdots \\
  & m_{sr_s} &     &   0 & \cdots & 0 \\
\end{matrix}
\right| .$$
Each determinant in the sum is the determinant of the evaluation matrix $\mathrm{ev}^{(L_e)}$ for a tropical curve where the vertex $V$ is deleted and the edge between $V$ and $W$ replaced by an unbounded end, associated with a constraint having polyvector $(-1)^\bullet \langle m_{ij}, n\rangle m_{11}\wedge\cdots\wedge\hat{m}_{ij}\wedge\cdots\wedge m_{sr_s}$. The sum of these vectors is precisely $\iota_n(\rho_1\wedge\cdots\wedge\rho_s)$. Hence, the result follows by induction.
\end{itemize}
\end{proof}

\section*{Appendix: invariance for the complex multiplicity}

We consider parametrized rational tropical curves of degree $\Delta$ in $N_\RR$. We allow vectors of $\Delta$ to be $0$, so that the unbounded ends associated to these vectors are marked points on the curves. For each unbounded end $e$, let $L_e$ be a primitive sublattice of $N/\langle n_e\rangle$, and let $l_e$ be its corank. We denote by $L_e^\RR=L_e\otimes\RR$. Notice that if $e$ corresponds to a marked point, $L_e$ is a sublattice of $N$. Let $\mathcal{L}_e$ be a generic affine space in $N_\RR/\langle n_i\rangle$ with slope $L_e^\RR$. This amounts to the choice of a point $\lambda_e$ in $N_\RR/\left(\langle n_e\rangle\oplus L_e^\RR\right)$. We prove the invariance of the solutions to problem $\mathcal{P}(L_e)$ using the complex multiplicity $m^{\CC,(L_e)}$.

\begin{lem}
For a generic choice of $(\lambda_e)$, there is a finite number of solutions to $\mathcal{P}(L_e)$. Moreover, these solutions are trivalent.
\end{lem}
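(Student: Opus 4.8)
The plan is to run the same dimension-count argument already used for the $\omega$-problem earlier in the paper (the \emph{pedestrian} proof), now applied to the composed evaluation map $\mathrm{ev}^{(L_e)}$. Recall that a solution to $\mathcal{P}(L_e)$ is precisely a preimage of the point $\lambda=(\lambda_e)\in\prod_e Q_e^\RR$ under $\mathrm{ev}^{(L_e)}$, where $Q_e=(N/\langle n_e\rangle)/L_e$ has rank $\mathrm{cork}\,L_e$. By the standing assumption $\sum_e\mathrm{cork}\,L_e=|\Delta|+r-3$, the target $\prod_e Q_e^\RR$ has the same dimension as the top-dimensional cones of $\modpar\simeq\mathcal{M}_{0,|\Delta|}\times N_\RR$. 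Thus the statement amounts to showing that the fiber of $\mathrm{ev}^{(L_e)}$ over a generic point is finite and contained in the interiors of top-dimensional cones.

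First I would stratify $\modpar$ by its finitely many combinatorial types $\mathcal{C}_\Delta$, on each cone of which $\mathrm{ev}^{(L_e)}$ is linear. I then isolate the \emph{bad} cones: (a) every cone $C$ of dimension strictly less than $|\Delta|+r-3$, i.e. every non-top cone, and in particular every facet; and (b) every top-dimensional cone $C\in\mathcal{C}_\Delta^{\mathrm{top}}$ on which $\mathrm{ev}_C^{(L_e)}$ fails to be injective, equivalently on which the complex multiplicity $|\det\,\mathrm{ev}_C^{(L_e)}|$ vanishes. On a bad cone of type (a), the image has dimension at most $\dim C<|\Delta|+r-3$; on a bad cone of type (b), the map has nontrivial kernel, so its image is again contained in a proper linear subspace. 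In either case $\mathrm{ev}_C^{(L_e)}(C)$ lies in a proper linear subspace of the target, hence is closed and nowhere dense.

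Since there are finitely many combinatorial types, the union $B$ of these bad images is a finite union of nowhere-dense closed sets, hence itself nowhere dense. I would then choose $\lambda=(\lambda_e)$ in the complement of $B$, which is exactly the genericity assumption. Any preimage of such a $\lambda$ cannot lie in a bad cone, so it must lie in the interior of a top-dimensional cone $C$ on which $\mathrm{ev}_C^{(L_e)}$ is injective; injectivity forces at most one preimage on each such cone. Finiteness of the solution set then follows from the finiteness of $\mathcal{C}_\Delta^{\mathrm{top}}$, and since top-dimensional cones parametrize trivalent combinatorial types, every solution, lying in the interior of such a cone (hence with all bounded-edge lengths positive), is trivalent.

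I expect the only point requiring care, though it is routine, to be the verification that the bad locus is genuinely nowhere dense: this rests on the elementary linear-algebra facts that a linear map from a $d$-dimensional source with nontrivial kernel has image of dimension smaller than $d$, and that a cone of dimension smaller than $d$ has image of dimension smaller than $d$, so that each bad image sits inside a proper linear subspace of the $d$-dimensional target. The finiteness of the set of combinatorial types, which guarantees that the bad locus is a finite union, is immediate since $\Delta$ is a fixed finite multiset of ends and there are finitely many trees with $|\Delta|$ labeled leaves.
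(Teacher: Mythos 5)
Your proof is correct and follows essentially the same route as the paper's: the same stratification by combinatorial type, the same identification of bad cones (non-top-dimensional ones and top-dimensional ones where $\mathrm{ev}^{(L_e)}$ is non-injective), the same observation that their images lie in proper subspaces, and the same conclusion via finiteness of combinatorial types and the fact that top-dimensional cones parametrize trivalent curves.
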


\begin{proof}
There is a finite number of cones in the moduli space $\mathcal{M}_0(\Delta,N_\RR)$. Moreover, on each cone, \textit{i.e.} each combinatorial type, $\mathrm{ev}^{(L_e)}$ is linear. If the restriction of $\mathrm{ev}^{(L_e)}$ to this cone is injective, the combinatorial type contributes at most one solution.

The point $(\lambda_e)$ can always be chosen outside the image of the non top-dimensional cones, since these images are included in proper subspaces of $\prod_e Q_e^\RR$. For a combinatorial type of top-dimension, as by assumption its dimension is equal to the dimension of $\prod_e Q_e^\RR$, if $\mathrm{ev}^{(L_e)}$ is not injective, it is not surjective either, and its image is thus contained in a proper subspace. Finally, a generic choice of $(\lambda_e)$ is chosen outside the image of the non top-dimensional cones and the cones where $\mathrm{ev}^{(L_e)}$ is not injective.

For such a choice of $(\lambda_e)$, there is a finite number of solutions, and the corresponding curves are trivalent since they belong to top dimensional cones of $\mathcal{M}_0(\Delta,N_\RR)$.
\end{proof}

Now, let $(\lambda_e)$ be chosen generically. We set
$$N_\Delta(\mathcal{L}_e)=\sum_{\mathrm{ev}^{(L_e)}(\Gamma,h)=(\lambda_e)}m_\Gamma^\CC.$$

Notice that if the subspaces $\mathcal{L}_e$ are chosen generically, there is no curve with zero multiplicity that contributes to the sum, otherwise $\mathrm{ev}^{(L_e)}$ is not injective nor surjective on the cone corresponding to the combinatorial type of the curve, and thus $(\lambda_e)$ has been chosen out of its image.

\begin{theo}
\label{proposition invariance tropical count with complex multiplicities}
The value of $N_\Delta(\mathcal{L}_e)$ only depends on the slopes $L_e$ of the affine subspaces and not their specific choice as long as it is generic.
\end{theo}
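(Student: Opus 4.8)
The plan is to show that $N_\Delta(\mathcal{L}_e)$ is locally constant as $(\lambda_e)$ ranges over the generic locus, and then to verify that it does not jump across the walls separating two chambers. By the lemma above, for generic $(\lambda_e)$ every solution lies in a top-dimensional cone $C$ on which $\mathrm{ev}^{(L_e)}_C$ is invertible, and such a cone contributes exactly one solution, of multiplicity $m_C^\CC=|\det\mathrm{ev}^{(L_e)}_C|$, precisely when $(\lambda_e)$ lies in the open image $\mathrm{ev}^{(L_e)}_C(C)$. Hence $N_\Delta(\mathcal{L}_e)$ is constant on each connected component of the complement, in $\prod_e Q_e^\RR$, of the finite union $\bigcup_C \partial\,\mathrm{ev}^{(L_e)}_C(C)$, and only its variation across the codimension-one pieces of this union remains to be controlled.

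A codimension-one piece is the image $\mathrm{ev}^{(L_e)}_W(W)$ of a facet $W$ of $\modpar$, that is, a combinatorial type all of whose vertices are trivalent but one, which is quadrivalent. Such a facet bounds exactly the three top-dimensional types $C_1,C_2,C_3$ obtained as the three smoothings of the quadrivalent vertex. Writing $a_1,a_2,a_3,a_4$ for the slopes of the four edges emanating from that vertex, these are the pairings $12\|34$, $13\|24$, $14\|23$. Fixing an orientation of $\prod_e Q_e^\RR$, each $\mathrm{ev}^{(L_e)}_{C_i}(C_i)$ is a half-space bounded by the hyperplane through $\mathrm{ev}^{(L_e)}_W(W)$, and local invariance of the count amounts to the single relation $\sum_i \varepsilon_i\, m_{C_i}^\CC=0$, where $\varepsilon_i\in\{\pm1\}$ records on which side of the wall the image of $C_i$ lies.

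The heart of the proof is this linear relation, and I would obtain it from the algorithm of Theorem \ref{Theorem calcul complex multiplicity tropical curve}. Choosing the sink inside the branch carrying the edge of slope $a_4$, the three branches of slopes $a_1,a_2,a_3$ contribute polyvectors $\rho_1,\rho_2,\rho_3$, and each $m_{C_i}^\CC$ is the value of one common linear map $\Phi\circ\iota_{-a_4}$ applied to a local term $X_i$; for instance $X_{12\|34}=\iota_{a_1+a_2}(\rho_1\wedge\rho_2)\wedge\rho_3$, and similarly for the two other pairings. Two elementary facts drive the computation. First, the interior product $\iota_{a_i}\rho_i$ vanishes, because $\rho_i$ is the Pl\"ucker vector of a space whose sweep direction already contains $a_i$. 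Second, $\iota_v\circ\iota_v=0$ for any vector $v$. Expanding the three $X_i$ by the graded Leibniz rule and using $\iota_{a_i}\rho_i=0$, a short manipulation shows that a signed sum of the $X_i$ equals $\iota_{a_1+a_2+a_3}(\rho_1\wedge\rho_2\wedge\rho_3)=\iota_{-a_4}(\rho_1\wedge\rho_2\wedge\rho_3)$; applying to it the contraction $\iota_{-a_4}$ appearing in the common map then annihilates it, since $\iota_{-a_4}\circ\iota_{-a_4}=0$. This yields exactly a relation $m_{C_1}^\CC\pm m_{C_2}^\CC\pm m_{C_3}^\CC=0$.

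It remains to check that the algebraic signs produced by this identity coincide with the geometric signs $\varepsilon_i$ dictated by the orientations of the three cones; once they do, the count is unchanged across the wall and the theorem follows, the degenerate combinatorial types with $m_{C_i}^\CC=0$ (classified in Lemma \ref{lemma zero mult}) never contributing a solution for generic constraints. I expect this sign-matching, together with the bookkeeping of the graded Leibniz rule, to be the only delicate point; it can be organized uniformly through the induced boundary orientations $\partial_W\ori(C_i)$. Alternatively, the whole statement follows at once from tropical intersection theory: $\modpar=\mathcal{M}_{0,|\Delta|}\times N_\RR$ is a balanced fan, so by \cite{allermann2010first} its image $\mathrm{ev}^{(L_e)}_*[\modpar]$ is a balanced top-dimensional cycle in the vector space $\prod_e Q_e^\RR$, hence a constant multiple of it; that multiple, read off at any generic point as $\sum_C m_C^\CC$, is $N_\Delta(\mathcal{L}_e)$, which is therefore independent of the generic choice of $(\lambda_e)$.
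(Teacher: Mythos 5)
Your overall strategy coincides with the paper's (local constancy of $N_\Delta(\mathcal{L}_e)$ away from the quadrivalent walls, plus a three-term linear relation at each wall), but you derive the key relation $\sum_i\varepsilon_i m_{C_i}^{\CC}=0$ by a genuinely different computation. The paper writes out the three evaluation matrices $A_{12//34}$, $A_{13//24}$, $A_{14//23}$ explicitly, observes that they differ only in the column recording the length of the new edge, sums the determinants by multilinearity in that column, and kills the result by a column operation using the balancing condition $v_1+v_2+v_3+v_4=0$. You instead run the polyvector algorithm of Theorem \ref{Theorem calcul complex multiplicity tropical curve} with the sink behind the fourth edge, and your identity is correct: using only $\iota_{a_i}\rho_i=0$ and the graded Leibniz rule one checks that, for a fixed ordering of the wedges, $X_{12//34}+(-1)^{d_2d_3}X_{13//24}+(-1)^{d_1}X_{14//23}=\iota_{-a_4}(\rho_1\wedge\rho_2\wedge\rho_3)$ with $d_i=\deg\rho_i$, and the outer contraction $\iota_{-a_4}$ then annihilates it. (When the fourth branch is a single unbounded end there is no further contraction; one instead wedges with $\rho_4$ and uses $\rho_1\wedge\rho_2\wedge\rho_3\wedge\rho_4\in\Lambda^{r+1}M=\{0\}$ together with $\iota_{a_4}\rho_4=0$ --- the same trick as in the paper's sink-independence lemma --- so this case should be mentioned.) What your route buys is a coordinate-free and shorter derivation of the relation; what it costs is the second half of the argument, which you only sketch: in the paper's matrix formulation the ``which side of the wall'' question is immediate via Cramer's rule, since the three matrices share every column but the last, so the new edge length equals $\det\tilde A_\star/\det A_\star$ with a common numerator and the sign of $\det A_\star$ decides whether each smoothing actually occurs. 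In your formulation one must still verify that the algebraic signs produced by the Leibniz expansion agree with the boundary-orientation signs $\partial_W\ori(C_i)$, i.e.\ that the algorithm computes the signed determinant consistently rather than only its absolute value; this is true but not free, and it is the one step you defer. Your closing alternative via \cite{allermann2010first} (the pushforward of the balanced fan $\modpar$ under $\mathrm{ev}^{(L_e)}$ is a top-dimensional balanced cycle in the vector space $\prod_e Q_e^\RR$, hence a constant multiple of it, the constant being $N_\Delta(\mathcal{L}_e)$) is complete as stated and is exactly the route the paper itself points to in the remark following its proof.
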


This invariant is thus denoted by $N_\Delta(L_e)$.

\begin{proof}
As many proofs of tropical invariance, the proof goes by the study of local invariance at the walls of the tropical moduli space. The proof is similar to Proposition $4.4$ in \cite{gathmann2008kontsevich}. We proceed in two steps: first, we show that the sum of the determinants of the composed evaluation maps around a wall is zero, and then we show that the sign of these determinants characterizes the existence of solutions, thus proving the local invariance.

\begin{itemize}[label=$\bullet$]
\item We consider the wall associated to a quadrivalent vertex in the tropical curve. Let the adjacent edges be denoted by the indices $1,2,3,4$. The three adjacent combinatorial types are determined by the splitting of the quadrivalent vertex into two trivalent vertices. These possibilities are denoted by $12//34$, $13//24$ and $14//23$. The cone in the moduli space $\mathcal{M}_0(N_\RR,\Delta)$ corresponding to each combinatorial type is the quadrant of the vector space $N_\RR\times\RR^{m-4}\times\RR$, consisting of the points with positive coordinates on the $\RR$ entries, where the $N_\RR$ factor corresponds to the vertex $V$ adjacent to the edge $1$, the $\RR^{m-4}$ has canonical basis indexed by the edges of the curve with the quadrivalent vertex, which are common to all curves in the adjacent combinatorial types, and the $\RR$ factor corresponds to the length of the edge resulting from the splitting of the quadrivalent vertex.\\

Let $v_j$ be a directing vector of the edge $j$, oriented outward the quadrivalent vertex. For each marked point or unbounded end $i$, associated to a constraint $L_i$, let $m_{i1},\dots,m_{ir_i}$ be linear forms defining the  sublattice $L_i$: $L_i=\bigcap_{j=1}^{r_i}\ker m_{ij}$. Then, for the combinatorial type $12//34$, the matrix of the composed evaluation map $\mathrm{ev}^{(L_e)}$ takes the following form:

$$\begin{array}{c|c|c|c|c|c|c|c}
   12//34               & N_\RR & \RR^{m-8} & 1 & 2 & 3 & 4 & \RR \\ \hline
\text{behind }1 & m_{ij}  & 0 \text{ or } \langle m_{ij},v\rangle &  \langle m_{ij},v_1\rangle & 0 & 0 & 0 & 0     \\
\text{behind }2 & m_{ij}  & 0 \text{ or } \langle m_{ij},v\rangle &  0 & \langle m_{ij},v_2\rangle & 0 & 0 &  0    \\
\text{behind }3 & m_{ij}  & 0 \text{ or } \langle m_{ij},v\rangle &  0 & 0 & \langle m_{ij},v_3\rangle & 0 &  \langle m_{ij},v_1+v_2\rangle     \\
\text{behind }4 & m_{ij}  & 0 \text{ or } \langle m_{ij},v\rangle &  0 & 0 & 0 & \langle m_{ij},v_4\rangle &  \langle m_{ij},v_1+v_2\rangle \\  
\end{array}.$$

The columns are separated according to the decomposition of the moduli space as $N_\RR\times\RR^{m-4}\times\RR$. Moreover, we separate the coordinates corresponding to the lengths of the edges  $1,2,3,4$, assuming they are bounded edges. The rows are separated according to whether which of the four edges $1,2,3,4$ is on the shortest path between the vertex $V$ and the unbounded end or marked point $i$. For each unbounded end or marked point $i$, we evaluate the linear forms $(m_{ij})_j$. For each edge $e$ directed by $v$, the evaluation for the unbounded end or marked point $i$ is $\langle m_{ij},v\rangle$ if the edge $e$ is part of the shortest path between $V$ and unbounded end or marked point $i$, otherwise it is $0$. This fact allows us to complete the middle entries of the matrix. The same rule apply for the entries of the last columns. The matrices for the combinatorial types $13//24$ and $14//23$ are respectively

$$\begin{array}{c|c|c|c|c|c|c|c}
 13//24                 & N_\RR & \RR^{m-8} & 1 & 2 & 3 & 4 & \RR \\ \hline
\text{behind }1 & m_{ij}  & 0 \text{ or } \langle m_{ij},v\rangle & \langle m_{ij},v_1\rangle & 0 & 0 & 0 &  0     \\
\text{behind }2 & m_{ij}  & 0 \text{ or } \langle m_{ij},v\rangle &  0 & \langle m_{ij},v_2\rangle & 0 & 0 &  \langle m_{ij},v_1+v_3\rangle    \\
\text{behind }3 & m_{ij}  & 0 \text{ or } \langle m_{ij},v\rangle &  0 & 0 & \langle m_{ij},v_3\rangle & 0 &  0     \\
\text{behind }4 & m_{ij}  & 0 \text{ or } \langle m_{ij},v\rangle &  0 & 0 & 0 & \langle m_{ij},v_4\rangle &  \langle m_{ij},v_1+v_3\rangle \\  
\end{array},$$

$$\begin{array}{c|c|c|c|c|c|c|c}
  14//23                & N_\RR & \RR^{m-8} & 1 & 2 & 3 & 4 & \RR \\ \hline
\text{behind }1 & m_{ij}  & 0 \text{ or } \langle m_{ij},v\rangle & \langle m_{ij},v_1\rangle & 0 & 0 & 0 &  0     \\
\text{behind }2 & m_{ij}  & 0 \text{ or } \langle m_{ij},v\rangle &  0 & \langle m_{ij},v_2\rangle & 0 & 0 &  \langle m_{ij},v_1+v_4\rangle    \\
\text{behind }3 & m_{ij}  & 0 \text{ or } \langle m_{ij},v\rangle &  0 & 0 & \langle m_{ij},v_3\rangle & 0 &  \langle m_{ij},v_1+v_4\rangle     \\
\text{behind }4 & m_{ij}  & 0 \text{ or } \langle m_{ij},v\rangle &  0 & 0 & 0 & \langle m_{ij},v_4\rangle &  0 \\  
\end{array}.$$

We make the sum of the three determinants for the  three adjacent combinatorial types, and use the linearity with respect to the last column, since all the other columns are equal. We get

$$\begin{array}{c|c|c|c|c|c|c|c}
                 & N_\RR & \RR^{m-8} & 1 & 2 & 3 & 4 & \RR \\ \hline
\text{behind }1 & m_{ij}  & 0 \text{ or } \langle m_{ij},v\rangle & \langle m_{ij},v_1\rangle & 0 & 0 & 0 &  0     \\
\text{behind }2 & m_{ij}  & 0 \text{ or } \langle m_{ij},v\rangle &  0 & \langle m_{ij},v_2\rangle & 0 & 0 &    \langle m_{ij},2v_1+v_3+v_4\rangle    \\
\text{behind }3 & m_{ij}  & 0 \text{ or } \langle m_{ij},v\rangle &  0 & 0 & \langle m_{ij},v_3\rangle & 0 &    \langle m_{ij},2v_1+v_2+v_4\rangle     \\
\text{behind }4 & m_{ij}  & 0 \text{ or } \langle m_{ij},v\rangle &  0 & 0 & 0 & \langle m_{ij},v_4\rangle &    \langle m_{ij},2v_1+v_2+v_3\rangle \\  
\end{array}.$$

Using a combination of the columns corresponding to $N_\RR$ applied to $v_1$, and the balancing condition $v_1+v_2+v_3+v_4=0$, we get

$$\begin{array}{c|c|c|c|c|c|c|c}
                 & N_\RR & \RR^{m-8} & 1 & 2 & 3 & 4 & \RR \\ \hline
\text{behind }1 & m_{ij}  & 0 \text{ or } \langle m_{ij},v\rangle & \langle m_{ij},v_1\rangle & 0 & 0 & 0 &  \langle m_{ij},v_1\rangle     \\
\text{behind }2 & m_{ij}  & 0 \text{ or } \langle m_{ij},v\rangle &  0 & \langle m_{ij},v_2\rangle & 0 & 0 &    \langle m_{ij},v_2\rangle    \\
\text{behind }3 & m_{ij}  & 0 \text{ or } \langle m_{ij},v\rangle &  0 & 0 & \langle m_{ij},v_3\rangle & 0 &    \langle m_{ij},v_3\rangle     \\
\text{behind }4 & m_{ij}  & 0 \text{ or } \langle m_{ij},v\rangle &  0 & 0 & 0 & \langle m_{ij},v_4\rangle &    \langle m_{ij},v_4\rangle \\  
\end{array}.$$
Now, we see that the last column is the sum of the columns indexed $1,2,3,4$. Thus, the sum of the determinants is $0$. If  some of the edges $1,2,3,4$ was unbounded, the columns with the corresponding indices would not appear, but for the linear forms $m_{ij}$ that would be evaluated on the corresponding unbounded end, one would already have $\langle m_{ij},v_i\rangle=0$ and the result is unchanged. Finally, one has
$$\mathrm{det} A_{12//34}+\mathrm{det} A_{13//24}+\mathrm{det} A_{14//23}=0.$$

\item We now use the previous statement to prove the invariance of the count. We denote by $\mathcal{L}$ the tuple $(\mathcal{L}_e)$. Let $\mathcal{L}(t)$ be a generic path between two generic configurations $\mathcal{L}(0)$ and $\mathcal{L}(1)$, \textit{i.e.} a path in $\prod_e Q_e^\RR$. Outside a finite set of values of $t$, the configuration $\mathcal{L}(t)$ is generic and $N_\Delta(\mathcal{L}(t))$ is given by a sum over some combinatorial types with non-zero multiplicity. More precisely, on each combinatorial type with non-zero multiplicity, the composed evaluation map $\mathrm{ev}^{(L_e)}$ is a linear map associated with a matrix $A$. If the coordinates on the combinatorial type are denoted by $(V,l)$, the equation can be formally solved: $(V,l)=A^{-1}\mathcal{L}$, and this provide a true solution of $\mathrm{ev}^{(L_e)}(\Gamma,h)=\mathcal{L}$ if the coordinates of $l$ are non-negative.\\

As the multiplicity only depends on the combinatorial type, the value of $N_\Delta(\mathcal{L}(t))$ is locally constant at generic $\mathcal{L}(t)$, thus, we only need to show invariance around the special values $t$ where $\mathcal{L}(t)$ is not generic. Let $t^*$ be such a value. At least one of the curves of $(\mathrm{ev}^{(L_e)})^{-1}(t^*)$ has a quadrivalent vertex, and it deforms into the adjacent combinatorial types when $t$ moves slightly around $t^*$. Let $A_{12//34}$, $A_{13//24}$ and $A_{14//23}$ be the matrices of $\mathrm{ev}^{(L_e)}$ on the three adjacent combinatorial types. On each combinatorial type, we can solve uniquely $A_\star(V,l)=\mathcal{L}(t)$: $(V,l)=A_\star^{-1}\lambda(t)$, where $\star$ is one of the three adjacent combinatorial types, and we get a true solution if all the coordinates of $l$ are non-negative. This is the case for all the edges except the edge that appears with the splitting of the quadrivalent vertex. Using Cramer's rule to solve $A_\star(V,l)=\mathcal{L}(t)$, the length of the new edge is equal to $\frac{\mathrm{det}\tilde{A}_\star}{\mathrm{det} A_\star}$, where $\tilde{A}_\star$ is the matrix $A_\star$ with the last column (the one that corresponds to the length of the new edge) being replaced with $\mathcal{L}\in\prod_i Q_e^\RR$. As the matrices $A_{12//34}$, $A_{13//24}$ and $A_{14//23}$ only differ in their last column, the numerators are all equal, and the sign of the length of the new edge is thus determined by the sign of $\mathrm{det} A_\star$. Finally, the sign of the determinant determines which combinatorial type provide a true solution, and the local invariance follows from the relation
$$\mathrm{det} A_{12//34}+\mathrm{det} A_{13//24}+\mathrm{det} A_{14//23}=0.$$
\end{itemize}

\end{proof}

\begin{rem}
The invariance also results from general results of tropical intersection theory \cite{allermann2010first}. Had we worked with more general tropical cycles $\Xi_e$, the proof with intersection theory would also work, but in that case, we would have more walls to study. These walls correspond to the cases where the parametrized tropical curves do not intersect the cycles $\Xi_e$ in their top-dimensional faces. The invariance would then result from the balancing condition for the cycles $\Xi_e$.
\end{rem}

\bibliographystyle{plain}
\bibliography{biblio}

{\ncsc Universit\'e de Neuch\^atel\\[-15.5pt] 

Avenue \'Emile Argan 11,
2000 Neuch\^atel,
Suisse} \\[-15.5pt]

\medskip

{\it E-mail address}: {\ntt thomas.blomme@imj-prg.fr}

\end{document}